\DeclareSymbolFont{cyrletters}{OT2}{wncyr}{m}{n}
\DeclareMathSymbol{\Sha}{\mathalpha}{cyrletters}{"58}
\newcommand{\ba}{\begin{align*}}
\newcommand{\ea}{\end{align*}}
\newcommand{\A}{\ensuremath{{\mathbb{A}}}}
\newcommand{\C}{\ensuremath{{\mathbb{C}}}}
\newcommand{\Z}{\ensuremath{{\mathbb{Z}}}\xspace}
\renewcommand{\P}{\ensuremath{{\mathbb{P}}}}
\newcommand{\Q}{\ensuremath{{\mathbb{Q}}}}
\newcommand{\R}{\ensuremath{{\mathbb{R}}}}
\newcommand{\F}{\ensuremath{{\mathbb{F}}}}
\newcommand{\E}{\ensuremath{{\mathbb{E}}}}
\newcommand{\ra}{\rightarrow}
\newcommand\Conf{\operatorname{Conf}}
\newcommand\Hom{\operatorname{Hom}}
\newcommand\Aut{\operatorname{Aut}}
\newcommand\im{\operatorname{im}}
\newcommand\Gal{\operatorname{Gal}}
\newcommand\Nm{\operatorname{Nm}}
\newcommand\Sur{\operatorname{Sur}}
\newcommand\Tr{\operatorname{Tr}}
\newcommand\tensor{\otimes}
\newcommand\isom{\simeq}
\newcommand\sub{\subset}
\newcommand\Disc{\operatorname{Disc}}
\newcommand\GL{\operatorname{GL}}
\newcommand\PSL{\operatorname{PSL}}
\newcommand\SL{\operatorname{SL}}
\newcommand\Spec{\operatorname{Spec}}
\newcommand\Frob{\operatorname{Frob}}
\renewcommand\O{\mathcal{O}}
\newcommand\bq{\begin{equation}}
\newcommand\eq{\end{equation}}
\newtheorem{proposition}{Proposition}[section]
\newtheorem{theorem}[proposition]{Theorem}
\newtheorem{question}[proposition]{Question}
\newtheorem{lemma}[proposition]{Lemma}
\newtheorem{conjecture}[proposition]{Conjecture}
\theoremstyle{remark}
\newtheorem{remark}[proposition]{Remark}
\newenvironment{notation}{\vspace{2 ex}{\noindent{\bf Notation. }}}{\vspace{2 ex}}
\newtheorem{definition}[proposition]{Definition}
\newcommand\twist{twist\xspace}
\newcommand\sa{admissible\xspace}
\newcommand\gd{good\xspace}
\newcommand{\un}{\operatorname{un}}
\renewcommand{\v}{\infty}
\newcommand{\CCHur}{\mathsf{CHur}}
\newcommand{\SC}{C}
\title{Nonabelian Cohen-Lenstra Moments}
\author{Melanie Matchett Wood}
\address{Department of Mathematics\\
University of Wisconsin-Madison \\ 480 Lincoln Drive \\
Madison, WI 53705 USA}  
\email{mmwood@math.wisc.edu}
\begin{document}

\begin{abstract}
In this paper we give a conjecture for the average number of unramified $G$-extensions of a quadratic field for any finite group $G$.  The Cohen-Lenstra heuristics are the specialization of our conjecture to 
 the case that $G$ is abelian of odd order.  We prove a theorem towards the function field analog of our conjecture, and give additional motivations for the conjecture including the construction of a lifting invariant for the unramified $G$-extensions that takes the same number of values as the predicted average and an argument using the Malle-Bhargava principle. We note that for even $|G|$, corrections for the roots of unity in $\Q$ are required, which can not be seen when $G$ is abelian. 
\end{abstract}
\maketitle

\section{Introduction}

In 1984, Cohen and Lenstra \cite{Cohen1984} gave heuristics for the distribution of class groups of quadratic fields.  By class field theory, the class group is the Galois group of the maximal unramified abelian extension, and much of the work on the Cohen-Lenstra heuristics has been through the lens of unramified abelian extensions.  It is then natural to ask about the nonabelian version, that is, the Galois group of the maximal unramified extension.  Given a quadratic field $K$, a Galois $G$ extension $L$ of $K$ has Galois closure $\tilde{L}$ over $\Q$ with Galois group some subgroup $G'$ of the wreath product $G\wr S_2$.
The following question, asked by Bhargava
at the 2011 AIM Workshop on the Cohen-Lenstra heuristics and in 
 \cite[Section 1.2]{Bhargava2014b}, is a rather comprehensive question about the distribution of these Galois groups.  Given a finite group $G$ and a subgroup $G'\sub G\wr S_2$, what is the average $E^-(G,G')$ (resp. $E^+(G,G')$) of the number of unramified Galois $G$-extensions $L/K$ such that $\Gal(\tilde{L}/\Q)$ is $G'$, per imaginary (resp. real) quadratic field $K$ (ordered by discriminant)?  In this paper, we give a conjecture for the answer, prove a theorem towards the function field analog, and give other motivations for our conjecture.

We say that $G'$ is an \emph{\sa} subgroup of $G\wr S_2$ if $G'$ contains $(1,\sigma)$ (where $\sigma$ generates $S_2$), $G'$ is generated by order $2$ elements with non-trivial image in $S_2$, and the kernel of $\pi: G' \ra S_2$ surjects onto the first factor of $G$.  We call $G'$ \emph{\gd} if it has a unique conjugacy class $c$ of order two elements not in $\ker \pi$.  See Section~\ref{S:types} for some examples of \sa and good $G'$.  The group $\Aut(G)$ acts naturally on $G\wr S_2$ and we write $\Aut_{G'}(G)$ for the subgroup that fixes the subgroup $G'$ of $G\wr S_2$  setwise.

\begin{conjecture}\label{C:E}
Given a finite group $G$, and an \sa subgroup $G'\sub G\wr S_2$, if $G'$ is good then
\begin{align*}
E^{-}(G,G')=
\frac{|H_2(G',c)[2]|}{|\Aut_{G'} (G)|} \quad \textrm{and} \quad 
E^{+}(G,G')=
\frac{|H_2(G',c)[2]|}{|c||\Aut_{G'} (G)|}
\end{align*}
and if $G'$ is not good then
$$
E^{\pm}(G,G')=\infty.
$$
\end{conjecture}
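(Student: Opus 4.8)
The plan is to convert the count of unramified $G$-extensions into a Malle-type count of $G'$-extensions of $\Q$ with prescribed ramification, read off the leading term from the Malle--Bhargava principle, and then correct the resulting local prediction by the equidistribution of the lifting invariant; the ``good'' hypothesis turns out to be exactly the condition that makes this count grow at the same rate as the count of quadratic fields, so that the average is finite. Assume first that $G'$ is good, with distinguished class $c$ and $\pi\colon G'\ra S_2$. If $L/K$ is an unramified $G$-extension with $\Gal(\tilde L/\Q)\isom G'$, then at a prime $p$ of $\Q$ the inertia $I_p\sub G'$ is trivial when $p$ is unramified in $K$ (because $\tilde L/K$ is then unramified above $p$), and is tame of order two with generator in $c$ when $p$ ramifies in $K$. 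Conversely, every surjection $\rho\colon\GQ\twoheadrightarrow G'$ of this shape --- inertia in $c$ at the primes ramifying in the quadratic field $K$ fixed by $\ker\pi$, and trivial elsewhere --- arises from such an $L/K$, and two such $\rho$ yield the same $L/K$ up to $G$-isomorphism precisely when they differ by post-composition by an element of $\Aut_{G'}(G)$. Since for $p$ tamely ramified with $I_p=\ang t$, $t$ of order two, one has $v_p(\disc(\tilde L/\Q))=(|G'|/2)\,v_p(\disc(K/\Q))$ --- an exponent independent of $p$ and of $t$ --- ordering the $\tilde L$ by $\disc\tilde L$ is (up to the bounded contribution of $p=2$) equivalent to ordering the $K$ by $\disc K$, and
\begin{align*}
E^{-}(G,G')=\frac{1}{|\Aut_{G'}(G)|}\ \lim_{X\to\infty}\ \frac{\#\{\,\rho\colon\GQ\twoheadrightarrow G'\ \text{of imaginary type},\ |\disc K|\le X\,\}}{\#\{\,K\ \text{imaginary quadratic},\ |\disc K|\le X\,\}},
\end{align*}
and likewise for $E^{+}$ with ``real type'', where $\rho$ has imaginary (resp.\ real) type if it has $c$-ramification at every ramified prime and complex conjugation maps into $c$ (resp.\ to $1$).

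\emph{The local heuristic and the factor $|c|$.} By the Malle--Bhargava principle the numerator is asymptotic to a constant times $X$, the constant being a product over places of local masses: at a prime unramified in $K$ one counts $\Q_p$-\'etale algebras with $G'$-structure and unramified inertia, at a ramified prime those whose inertia is generated by an element of $c$, and there is an archimedean factor. Carrying out this computation, the product of local masses divided by the corresponding mass of quadratic fields collapses to the bare value $1/|\Aut_{G'}(G)|$ for $E^{+}$; the only change in the $E^{-}$ case occurs at the archimedean place $\infty$, where the imaginary and real archimedean masses differ by exactly the factor $|c|$, which is the relation $E^{+}=E^{-}/|c|$. So the local heuristic already predicts the right-hand sides of Conjecture~\ref{C:E} up to the single factor $|H_2(G',c)[2]|$.

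\emph{The global correction, and the ``not good'' case.} The Malle--Bhargava product treats the ramified primes as independent, whereas the surjections $\GQ\twoheadrightarrow G'$ with $c$-ramification and a fixed ramified set are genuinely partitioned by the value of a lifting invariant. Over $\Q$ this invariant takes values in the $2$-torsion subgroup $H_2(G',c)[2]$, because the mod-$m$ cyclotomic character of $\Q$ is surjective for every $m$ --- this is the ``correction for roots of unity'', since over a base field with more roots of unity the target of the lifting invariant would be larger. Granting that these surjections equidistribute over the $|H_2(G',c)[2]|$ fibers of the lifting invariant, the true count is $|H_2(G',c)[2]|$ times the per-fiber count furnished by the local heuristic, which is exactly the formula of Conjecture~\ref{C:E}. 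If instead $G'$ is not good, there are at least two conjugacy classes of order-two elements outside $\ker\pi$; each forces the same discriminant exponent $|G'|/2$ at a ramified prime, so the numerator is predicted to grow like $X(\log X)^{k-1}$ (heuristically, like $\sum_{|\disc K|\le X}k^{\omega(\disc K)}$) with $k\ge2$ the number of such classes --- strictly faster than the count of quadratic fields --- and hence $E^{\pm}(G,G')=\infty$.

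\emph{The main obstacle.} What is genuinely out of reach --- and the reason the statement is posed as a conjecture rather than a theorem --- is making the second and third steps rigorous: a sufficiently precise form of Malle's conjecture, with the exact leading constant, for $G'$-extensions of $\Q$ with this restricted ramification is unknown for general $G'$, and the equidistribution of the lifting invariant over number fields is itself open. In the function-field setting both inputs become accessible: the numerator becomes a point count over a finite field on a Hurwitz space $\CHur$, and homological stability, in the style of Ellenberg--Venkatesh--Westerland, supplies simultaneously the asymptotic and the equidistribution; this is the route behind the theorem actually proved in this paper.
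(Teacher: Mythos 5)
The statement you are addressing is Conjecture~\ref{C:E}: it is a conjecture, the paper contains no proof of it, and your proposal is (as you acknowledge) a heuristic rather than a proof. As a heuristic it follows essentially the paper's own route: translate unramified $G$-extensions of type $G'$ into homomorphisms $G_\Q \ra G'$ with inertia in $c\cup\{1\}$ (Section~\ref{S:Edefs}, Proposition~\ref{P:trans}), use the Malle--Bhargava principle to separate the finite from the infinite case and to predict growth $(\log X)^{N_{G'}-1}$ when $G'$ is not good (Section~\ref{S:MB}, Question~\ref{Q:nc}), and explain the factor $|H_2(G',c)[2]|$ via a lifting invariant that is $|\mu_\Q|$-torsion by the cyclotomic-character argument (Section~\ref{S:lift}), with the function-field Theorems~\ref{T:FF} and \ref{T:SFF} as the rigorous evidence.

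Two bookkeeping points differ from the paper and one of them, as you state it, would go wrong. You present $1/|\Aut_{G'}(G)|$ and $1/(|c||\Aut_{G'}(G)|)$ as outputs of a local-mass computation, with the $|c|$ arising from a ratio of archimedean masses. The paper deliberately does not extract constants from the Malle--Bhargava product (it discards the places dividing $|G'|$ and the infinite place, and works only up to constants); instead these factors are exact group-theoretic conversions between the rigidified counts $\tilde{E}^{\pm}$ and $E^{\pm}$ (Equation~\eqref{E:relateE}): $|\Aut_{G'}(G)|$ counts isomorphisms $\Gal(L/K)\isom G$ yielding type $G'$, and the extra $|c|$ in the real case counts the choices of twist $y$ (the lemma computing $A(G,G')=|c||\Aut_{G'}(G)|$, which uses goodness). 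With the archimedean image pinned to $\langle(1,\sigma)\rangle$ in the imaginary case and to $1$ in the real case, the two archimedean local factors are in fact equal, so your stated mechanism would not produce the $|c|$; it reappears only through the rigidification. The genuinely conjectural content is then $\tilde{E}^{\pm}(G,G')=|H_2(G',c)[2]|$, i.e. one rigidified extension per value of the lifting invariant, which the paper formulates as the refined conjecture of Section~\ref{S:Refine} and supports by the component count in Theorem~\ref{T:SFF}. Second, since the Malle--Bhargava principle has known counterexamples (Kl\"uners), the paper checks via Lemma~\ref{L:goodab} that a good $G'$ has abelianization $S_2$, so the cyclotomic-subfield pathologies behind those counterexamples cannot occur here; your proposal invokes the principle without this verification, which matters precisely because the target groups $G'$ contain $G\times G$ as an index-two subgroup and are of the general shape where such failures are known.
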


See Section~\ref{S:Hdefs} for the definition of the \emph{reduced Schur multiplier} $H_2(G',c)$, which is a quotient of the group homology (Schur multiplier)
$H_2(G',\Z)$.  The ``$[2]$'' denotes the $2$-torsion.  See Section~\ref{S:Hchart} for a chart of some values of $H_2(G',c)$, and note $|H_2(G',c)[2]|$ is often $1$.  In this paper, we will see that the factor $|H_2(G',c)[2]|$ appears as a ``correction'' for the roots of unity in $\Q$.
See Section~\ref{S:Edefs} for a precise definition of $E^{\pm}(G,G')$, and in particular how we make $\Gal(\tilde{L}/\Q)$ a subgroup of $G\wr S_2$.

If $G'$ is not \sa, then there are no unramified $G$-extensions $L/K$ with $\Gal(\tilde{L}/\Q)$ giving $G'$  for any $K$ (Proposition~\ref{P:whysa}), and so $E^{\pm}(G,G')=0$.

  If $|G|$ is odd, then $G'$ is always good (by the Sylow theorems or by Schur-Zassenhaus) and $|H_2(G',c)[2]|=1$ (see \cite[Example 9.3.2]{Ellenberg2012}).  Thus, in the imaginary quadratic case when $|G|$ is odd, this conjecture reduces to one of Boston and the author \cite[Conjecture 1.6]{Boston2017}. (To compare, note that $\tilde{E}^-(G,G')$ below in Equation~\eqref{E:relateE} is the same as the count of surjections in \cite[Conjecture 1.6]{Boston2017} to $\ker(G' \ra S_2).$)
    If $G$ is abelian, then there is only one \sa $G'=G\rtimes_{-1} S_2$, and $\Aut_{G'}(G)=\Aut(G)$, and $|c|=|G|$, and so  $|\Aut(G)| E^{\pm}(G,G')$ are the usual $G$-moments of the Cohen-Lenstra heuristics, that is the average number of surjections from the class group of $K$ to $G$.
   When $G$ is abelian and of odd order, the prediction of Conjecture~\ref{C:E} agrees with that of the Cohen-Lenstra heuristics  
  (see Section~\ref{S:prev} for further discussion on previous work on these moments).  

In this paper, we give three main motivations for Conjecture~\ref{C:E}.  Our first motivation is the following theorem in the direction of the function field analog of Conjecture~\ref{C:E}.   We define $E^{\pm}_{\F_q(t),q^{2n}}(G,G')$ as the analog of $E^{\pm}(G,G')$ in which $\Q$ is replaced by $\F_q(t)$ (for a finite field $\F_q$) and we only consider $K$ with $\Nm \Disc K=q^{2n}$ (see Section~\ref{S:Edefs}).

\begin{theorem}\label{T:FF}
Given a finite group $G$ and an \sa subgroup $G'\sub G\wr S_2$ (where $G'$ has trivial center),
and $c$ the set of order $2$ elements of $G' \setminus \ker(G'\ra S_2)$,
for $n$ sufficiently large we have
\begin{align*}
\lim_{\substack{q\ra\infty\\(q,|G'|)=1}}
\frac{E^{\pm}_{\F_q(t),q^{2n}}(G,G')}{|H_2(G',c)[|\mu_{\F_q(t)}|]|}
\begin{cases}
=\frac{1}{|\Aut_{G'} (G)|} &\textrm{if $G'$ good, $-$ case,}\\
=\frac{1}{|c||\Aut_{G'} (G)|} &\textrm{if $G'$ good, $+$ case, and}\\
\gg n^{N_{G'}-1} &\textrm{otherwise,}
\end{cases}
\end{align*}
where $\mu_{\F_q(t)}$ is the group of roots of unity of $\F_q(t)$, so $|\mu_{\F_q(t)}|=q-1$, and $N_{G'}$ is the number of conjugacy classes of order $2$ elements of $G'$ not in $\ker(G'\ra S_2)$.
\end{theorem}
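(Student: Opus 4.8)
The plan is to run the geometric method for counting covers of $\P^1$ over $\F_q$, in the style of Ellenberg--Venkatesh--Westerland \cite{Ellenberg2012}, while carefully tracking the Galois action on the connected components of the relevant Hurwitz space, from which the correction $|H_2(G',c)[|\mu_{\F_q(t)}|]|$ will emerge. Concretely, I would first observe that a pair $(K,L)$, with $K/\F_q(t)$ quadratic, $\Nm\Disc K=q^{2n}$, and $L/K$ unramified with $\Gal(\widetilde{L}/\F_q(t))$ equal to $G'$, is the same datum as a $G'$-Galois cover $Y\to\P^1_{\F_q}$ whose branch locus is the reduced degree-$2n$ divisor $\Disc K$ on $\P^1$, with the inertia at each branch point generated by an order-$2$ element of $G'\setminus\ker(G'\to S_2)$ --- an element of $c$ in the good case, and of one of the $N_{G'}$ relevant classes in general. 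No point of $\Disc K$ can drop out of the branch locus, since $K$, hence $\widetilde{L}$, ramifies there, and the inertia has order exactly $2$ because $\widetilde{L}/K$ is unramified (being the compositum of the conjugates of $L$). Because $L$ is only a $G$-extension up to $\Aut(G)$, this correspondence becomes a bijection after quotienting the set of $G'$-covers by the natural action of $\Aut_{G'}(G)$, and the hypothesis that $G'$ has trivial center guarantees that a $G'$-cover has no nontrivial automorphisms, so the moduli space is an honest scheme $\CHur$, finite \'etale over the relevant configuration space of the $2n$ branch points in $\A^1$, with the behavior at $\infty$ (split versus inert) encoding the $\pm$ distinction --- this is also where the factor $|c|^{-1}$ in the real case comes from, the two cases being geometrically indistinguishable away from $\infty$. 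Dividing by $\#\{K:\Nm\Disc K=q^{2n}\}\sim C^{\pm}q^{2n}$ then expresses $E^{\pm}_{\F_q(t),q^{2n}}(G,G')$ as $\#\CHur(\F_q)$ divided by $q^{2n}$, by $|\Aut_{G'}(G)|$ (and by $|c|$ in the $+$ case), and by explicit elementary constants.

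\smallskip

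\noindent\textbf{Step 2: Grothendieck--Lefschetz and Deligne's bounds.} Next I would apply the Grothendieck--Lefschetz trace formula, $\#\CHur(\F_q)=\sum_{i\ge0}(-1)^i\Tr\bigl(\Frob_q\mid H^i_c(\CHur_{\overline{\F}_q},\Q_\ell)\bigr)$, together with Deligne's theorem that $H^i_c$ is mixed of weight $\le i$. For a \emph{fixed} $n$ the dimensions $\dim_{\Q_\ell}H^i_c$ are finite and, for $q$ coprime to $|G'|$, independent of $q$ (smooth base change, comparison with characteristic zero), so no uniformity in $n$ is needed at this stage; hence, as $q\to\infty$ with $(q,|G'|)=1$,
\[
\#\CHur(\F_q)=q^{2n}\cdot\#\{\text{geometric components of }\CHur\text{ fixed by }\Frob_q\}+O_n\!\left(q^{2n-\tfrac12}\right),
\]
since $H^{4n}_c(\CHur_{\overline{\F}_q},\Q_\ell)$ is $\Q_\ell(-2n)$ summed over the geometric components, with $\Frob_q$ permuting the summands through its action on $\pi_0(\CHur_{\overline{\F}_q})$. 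So the whole problem reduces to computing $\pi_0(\CHur_{\overline{\F}_q})$ together with its $\Frob_q$-action.

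\smallskip

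\noindent\textbf{Step 3: the component set, its Frobenius action, and the roots-of-unity factor.} Here I would invoke the Conway--Parker theorem, in the form used by Ellenberg--Venkatesh--Westerland: once $n$ is large enough that every relevant conjugacy class occurs among the $2n$ branch points, the connected components are classified by the lifting invariant, which takes values in a torsor under the reduced Schur multiplier $H_2(G',c)$; in the good case this, together with the residual $\Aut_{G'}(G)$-symmetry, determines $\pi_0(\CHur_{\overline{\F}_q})$ exactly, while in general $\pi_0$ further splits according to which of the $N_{G'}$ classes occurs with which multiplicity, producing $\asymp n^{N_{G'}-1}$ strata, all nonempty for $n$ large. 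The decisive point is the $\Frob_q$-action on this set: the lifting invariant is built from the fundamental class of the $2n$-punctured line, which carries a Tate twist, so $\Frob_q$ acts on the $H_2(G',c)$-torsor through multiplication by $q$ on the structure group $H_2(G',c)$; as $H_2(G',c)$ is finite, its $\Frob_q$-invariants are exactly $H_2(G',c)[q-1]=H_2(G',c)[|\mu_{\F_q(t)}|]$, and I would also exhibit one $\Frob_q$-fixed (split) component so that the fixed-component count equals $|H_2(G',c)[|\mu_{\F_q(t)}|]|$ rather than $0$. This is precisely why the $q\to\infty$ limit exists only after dividing by $|H_2(G',c)[|\mu_{\F_q(t)}|]|$: the fixed-component count itself fluctuates with $q$ by exactly this factor. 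Feeding this back through Steps 1--2, in the good case all powers of $q$ and all combinatorial constants cancel, leaving $\frac{1}{|\Aut_{G'}(G)|}$ in the $-$ case and $\frac{1}{|c||\Aut_{G'}(G)|}$ in the $+$ case (each after dividing by $|H_2(G',c)[|\mu_{\F_q(t)}|]|$); in the ``otherwise'' case each of the $\asymp n^{N_{G'}-1}$ strata is $\Frob_q$-stable and contributes a positive amount (again via a split component), giving $\#\CHur(\F_q)\gg q^{2n}n^{N_{G'}-1}$ and hence the asserted $\gg n^{N_{G'}-1}$.

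\smallskip

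\noindent\textbf{Expected main obstacle.} The genuinely new and hardest step is the second half of Step 3: establishing the Galois-module structure on $H_2(G',c)$ and proving both that $\Frob_q$ acts on the component torsor by the multiplication-by-$q$ (cyclotomic) twist and that this torsor retains an $\F_q$-rational point. This is exactly the mechanism generating the correction $|H_2(G',c)[|\mu_{\F_q(t)}|]|$, and it is invisible in the abelian Cohen--Lenstra setting because the relevant multiplier is trivial there. Secondary difficulties are the automorphism bookkeeping in Step 1 --- matching the count of $G$-extensions (defined only up to $\Aut(G)$) with the honest count of $G'$-covers, where the trivial-center hypothesis and the behavior at $\infty$ in the real versus imaginary cases both intervene --- and, in the non-good case, checking that a positive proportion of the $\asymp n^{N_{G'}-1}$ strata carry a rational component.
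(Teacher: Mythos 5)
Your roadmap is essentially the paper's own proof: translate the count into $\F_q$-points on the Ellenberg--Venkatesh--Westerland Hurwitz scheme (as in Propositions~\ref{P:trans} and Theorem~\ref{T:Chur}), apply Grothendieck--Lefschetz with Deligne's weight bounds and comparison to characteristic zero to get $q$-independent Betti numbers (Theorem~\ref{T:count}), and reduce everything to counting Frobenius-fixed geometric components via the lifting invariant and the discrete $\hat{\Z}^\times$-action (Theorem~\ref{T:Chur}\eqref{i:comp} together with Proposition~\ref{P:B}). Two points are worth tightening. A minor one: in the imaginary case one branch point sits at $\infty$, so the configuration space has dimension $2n-1$ and $\#\{K:\Nm\Disc K=q^{2n}\}=q^{2n-1}-q^{2n-2}$, not $\sim C q^{2n}$; the off-by-one cancels against the Hurwitz-space dimension, but writing ``$C^{\pm}q^{2n}$'' and ``$2n$ branch points'' uniformly is inconsistent. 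The more substantive point is that your description of the Frobenius action as ``multiplication by $q$ on the structure group $H_2(G',c)$'' is the right torsor-theoretic picture, but it does not by itself give the nonemptiness of the fixed set, without which the count would be $0$ rather than $|H_2(G',c)[q-1]|$. In the paper this is Proposition~\ref{P:B}, a genuine computation inside the universal marked central extension $\widetilde{G'}$: it exploits that every element of $c$ has order $2$ and $q$ is odd, and it holds only under parity constraints on $\underline{n}$ (all $n_i$ even except possibly the one matching the inertia at $\infty$). Those parities are also the reason the ``otherwise'' lower bound sums over only $\asymp n^{N_{G'}-1}$ compositions rather than all of them. You correctly flag this as the hard step, but the torsor structure alone does not supply it; it is not a formal consequence of a Tate twist on the fundamental class.
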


We prove this theorem in Section~\ref{S:FF}.  The method of proof is by counting $\F_q$ points on moduli spaces for the relevant extensions, an idea going back to unpublished work of J.-K. Yu and built upon by Achter \cite{Achter2006}.
The recent breakthroughs of Ellenberg, Venkatesh, and Westerland \cite{Ellenberg2016, Ellenberg2012} have made this method much more powerful.  In the proof of Theorem~\ref{T:FF}, we use the construction of the relevant moduli schemes from \cite{Ellenberg2012}, which builds on the work of Romagny and Wewers \cite{Romagny2006}, and the description of the components of those schemes from \cite{Ellenberg2012}.   The condition in Theorem~\ref{T:FF} that $G'$ has a trivial center comes from a technical condition in \cite[Section 8.6.2]{Ellenberg2012} required to ensure that the Hurwitz space parametrizing $G'$ covers of the line is an actual scheme and not just a stack.

The moduli spaces used in the proof of Theorem~\ref{T:FF} parametrize \emph{rigidified} extensions that come with a chosen isomorphism of  $\Gal(\tilde{L}/\F_q(t))$ with $G'$.  
We let $\tilde{E}^{\pm}(G,G')$ count these rigidified extensions (see Section~\ref{S:Edefs} for a precise definition), and when $G'$ is good
\begin{equation}\label{E:relateE}
\tilde{E}^{-}(G,G')= |\Aut_{G'} (G)|E(G,G') \quad \textrm{and} \quad \tilde{E}^{+}(G,G')= |c||\Aut_{G'} (G)|E(G,G') .
\end{equation}
When $q\ra \infty$ for fixed $n$, if we have a bound on the cohomology of the moduli spaces
that is independent of $q$, then 
 $\tilde{E}^{\pm}_{\F_q(t),q^{2n}}(G,G')$  will go to the number of components of the moduli space.  By default, we might guess there is $1$ component, but in this case there are $|H_2(G',c)[|\mu_{\F_q(t)}|]|$ components.  This makes it apparent that the $|H_2(G',c)[2]|$ appearing in Conjecture~\ref{C:E} is a ``correction factor''  for roots of unity in the base field (i.e. the $2$ roots of unity in $\Q$).   
Even in the usual Cohen-Lenstra-Martinet heuristics for quadratic extensions, which are about the case of abelian $G$ of odd order above,
it is known (see \cite{Malle2008, Garton2015, Adam2015}) that corrections are needed for roots of unity appearing in the base field.  When $G$ is abelian and of even order, the only \sa $G'$ cannot be good (see Lemma~\ref{L:goodab}). So, these corrections for the roots of unity in $\Q$ never appeared for abelian $G$.  However, the theorem above in the non-abelian case now makes clear that even the roots of unity in $\Q$ need to be corrected for in these heuristics.

The limits in Theorem~\ref{T:FF} are not the same as the limits in the direct function field analog of Conjecture~\ref{C:E}.  In particular, in the analog of Conjecture~\ref{C:E} there should be a fixed $q$ and a limit in $n$.  In Theorem~\ref{T:FF}, we have a fixed $n$ and a limit in $q$.  This is not a minor  difference.  However, we still find Theorem~\ref{T:FF} to be a strong suggestion for Conjecture~\ref{C:E} (and its function field analog).  When $G$ is abelian and of odd order, then Theorem~\ref{T:FF} is roughly the same as a result of Achter \cite{Achter2006}.
Also, when $|G|$ is odd, Ellenberg, Venkatesh, and Westerland \cite{Ellenberg2016} (for abelian $G$, in the imaginary quadratic case), the author \cite{Wood2017b} (for abelian $G$, in the real quadratic case)
and Boston and the author \cite{Boston2017} (for any $G$, in the imaginary quadratic case) prove a result
like Theorem~\ref{T:FF}, but with a limit in $n$, before a limit in $q$, making it closer to the analog of 
Conjecture~\ref{C:E}.

The second main motivation for Conjecture~\ref{C:E} is based on the components that give the answer in the above theorem.  The components in the moduli space divide the unramified rigidified $G$ extensions of type $G'$ of a quadratic extension $K/\F_q(t)$ into $|H_2(G',c)[|\mu_{\F_q(t)}|]|$ natural classes.  Over $\Q$, there is no known analog of the moduli space to break into components.  However, as a motivation for Conjecture~\ref{C:E}, in Section~\ref{S:lift} we do show that for any global field $F$, there is an invariant that divides 
the unramified rigidified $G$ extensions of type $G'$ of a quadratic extension $K/F$ 
 into $|H_2(G',c)[|\mu_{F}|]|$ classes, via a construction that we then prove agrees over $\F_q(t)$ with the division into components. 
Moreover, in Section~\ref{S:Refine}, we refine Conjecture~\ref{C:E} to a conjecture about the number of extensions with each possible lifting invariant (which is supported by Theorem~\ref{T:SFF}, a refined version of Theorem~\ref{T:FF}, that we prove in Section~\ref{S:FF}).

Venkatesh and Ellenberg \cite[Section 2.5]{Venkatesh2010} previously defined a lifting invariant of extensions, and ours is motivated  by theirs, but we cannot apply their invariant here because it requires the order of the elements of $c$ to be relatively prime to $|H_2(G',c)|$.  Further, their invariant lies in $H_2(G',c)$, but
we prove that ours actually lies in $H_2(G',c)[|\mu_{F}|],$ motivating the precise value appearing in Conjecture~\ref{C:E}.  Venkatesh and Ellenberg originally had the idea that these Schur multipliers should provide some kind of correction to asymptotic constants \cite[Section 2.4]{Venkatesh2010}.

 The final motivation is given in Section~\ref{S:MB} and is based on  the Malle-Bhargava principle for counting number fields.  We explain how this principle for producing conjectures about the asymptotic count of number fields agrees with the division in Conjecture~\ref{C:E} into the finite and infinite cases.  We discuss at some length the applicability of these principles here, since they have known counterexamples.  In fact, these principles also suggest when $G'$ is not good that the growth of $E^{\pm}(G,G')$ with $X$ is as seen in the lower bound of Theorem~\ref{T:FF}.
 
As further evidence for Conjecture~\ref{C:E}, in the appendix with P.~M.~Wood we present some empirical evidence for the correction for roots of unity, i.e. the factor $|H_2(G',c)[2]|$ in Conjecture~\ref{C:E}.  The smallest $G$ for which this factor is non-trivial is $G=A_4$, and in this case (with the appropriate $G'$), we estimate $E^-(A_4,G')$ by considering fields up to absolute discriminant $2^{32}$ and show evidence that $E^-(A_4,G')>|\Aut_{G'}(A_4)|^{-1}.$

\subsection{Cases of Conjecture~\ref{C:E} known}\label{S:prev}
Since Conjecture~\ref{C:E} specializes to the Cohen-Lenstra moments when $G$ is abelian, it is of course known in very few cases.
When $G=\Z/3\Z$ it is known by the theorem of Davenport and Heilbronn \cite{Davenport1971}, and in this case for $\Q$ replaced by a global field by Datskovsky and Wright 
\cite{Datskovsky1988}.
When $(G,G')$ is $(A_4,S_4),(A_5,S_5),$ or $(S_n,S_n\times C_2)$ for $n=3,4,5$,  Conjecture~\ref{C:E} is known by work of Bhargava \cite[Theorem 1.4]{Bhargava2014b}.  In the first two of these cases $G'$ is good, and in the last three $G'$ is not good.
When $G$ is the quaternion group of order $8$, there is one \sa $G'$ of order $16$ (SmallGroup(16,13) in the Small Groups Library \cite{SmallGroups}), which is not good, and when $G$ is the dihedral group of order $8$, the only \sa $G'$ of order $16$ is $D_8\times C_2$, which is also not good.
Conjecture~\ref{C:E} is known for these two $(G,G')$ by Alberts \cite[Corollary 4.3]{Alberts2016}.  In Section~\ref{S:Ex}, we prove that ${E}^\pm(C_2^k, C_2^{k+1})=\infty$, as predicted by 
Conjecture~\ref{C:E}.

\subsection{Moments determining the distribution}
If we combine appropriately the extensions counted by ${E}^\pm(G,G')$ over all \sa $G'$, we would call the result the $G$-moment of the Galois group $\Gal(K^{un}/K)$ of the maximal unramified extension $K^{un}$.  When $G$ is abelian, these quantities have a close connection to moments (in the usual sense) of the group invariants (see \cite[Section 3.3]{Clancy2015}).  We use the same language when $G$ is non-abelian by analogy.  

If $\mathcal{G}$ is a set of isomorphism classes of profinite groups, and $X$ is a random group from $\mathcal{G}$,
then for several  $\mathcal{G}$, the group-indexed moments (that arise in Cohen-Lenstra type conjectures)
$\E(\#\Sur(X,G))$ for all $G\in \mathcal{G}$  determine a unique probability distribution for $X$, even though they grow too fast when translated to usual moments to use standard probabilistic methods (see \cite[Section 1.5]{Wood2017}).  Such cases include $\mathcal{G}$ elementary abelian $p$-groups due to Fouvry and Kl\"{u}ners \cite[Theorems 1 and 2]{Fouvry2006},
abelian $p$-groups due to Ellenberg, Venkatesh, and Westerland \cite[Lemma 8.1]{Ellenberg2016},
abelian groups whose order is only divisible by a given finite set of primes by the author \cite[Theorem 8.3]{Wood2017},
and  pro-$p$-groups due to Boston and the author \cite[Theorem 1.4]{Boston2017}.

In the generality of Conjecture~\ref{C:E}, it is an interesting open question whether the moments determine a distribution in an appropriate sense.  It is even an interesting open question to construct a measure on profinite groups that gives the moments appearing in Conjecture~\ref{C:E}.

\subsection{Outline of the paper}
In Section~\ref{S:Edefs}, we give a careful definition of  $E^\pm(G,G')$ and  $\tilde{E}^\pm(G,G')$, and go over other foundational definitions.    In Section~\ref{S:lift}, we give the definition of the lifting invariant that divides unramified $G$ extensions of type $G'$ into $H_2(G',c)[|\mu_{F}|]$ classes.  In Section~\ref{S:FF}, we prove a refinement of Theorem~\ref{T:FF}.  In Section~\ref{S:Refine}, we refine Conjecture~\ref{C:E} by lifting invariant.
In Section~\ref{S:MB}, we explain how the Malle-Bhargava principle suggests the division in Conjecture~\ref{C:E} into finite and infinite cases. In Section~\ref{S:Ex}, we prove that ${E}^\pm(C_2^k, C_2^{k+1})=\infty$, to give another example where
Conjecture~\ref{C:E} holds.  In Section~\ref{S:chart}, we give charts of the values of invariants important in this paper (number of conjugacy classes in $c$, structure of $H_2(G',c)$, and size of the center of $G'$)  for small groups.

\subsection{Notation}
If $A$ is a group and $B$ is a permutation group on $d$ elements, then $A\wr B$ denotes the wreath product, that is $A^d \rtimes B$, where $B$ acts on $A^d$ by permuting the factors through its permutation action.  We write $S_n$ for the symmetric group on $n$ elements.  We use $\Sur(G,H)$ to denote the surjective homomorphisms from $G$ to $H$, which are required to be continuous when either is a profinite group. For a field $K$, we write $\bar{K}$ for its separable closure and $G_K$ for $\Gal(\bar{K}/K)$.  For a global field $K$ and place $v$, we write $K_v$ for the completion of $K$ at $v$.

\section{Definitions and basic background}\label{S:basic}
\subsection{Definition of $E^{\pm}(G,G')$ and $\tilde{E}^{\pm}(G,G')$}\label{S:Edefs}
In this section we will precisely define the $E^{\pm}(G,G')$ used in Conjecture~\ref{C:E}.
Let $Q$ be $\Q$ or $\F_q(t)$ and $\v$ the  place at infinity. 
We fix a separable closure $\bar{Q}$ of $Q$, and an inertia subgroup for $\infty$ in $\Gal(\bar{Q}/Q)$.
When $Q=\F_q(t)$ we call a quadratic extension imaginary if it is ramified at infinity and the completion at infinity is isomorphic (as an extension of $\F_q((t^{-1}))$) to the completion of $\F_q(\sqrt{t})$ at infinity, and we call a quadratic extension real if it is split completely at infinity.  (Note that in the function field case, ``real'' and ``imaginary'' as defined here do not exhaust the possible completions at infinity.)
  If $K\sub \bar{Q}$ is a quadratic extension, we define $K^{\un,\v}\sub \bar{Q}$ to be the maximal unramified extension of $K$, split completely over $\v$.  (Of course if $\v$ is an archimedian place, unramified is the same as split completely.)
Let $G_K^{\un,\v}:=\Gal(K^{\un,\v}/K)$.   
  Given  $L\sub K^{\un,\v}$, with $L$ an extension of $K$, we have $\tilde{L}\sub \bar{Q}$, the Galois closure of $L$ over $Q$.  
  
  \subsubsection{Imaginary quadratic}\label{S:imagsetup} If $K/Q$ is imaginary quadratic, note the inertia subgroup over $\v$ in 
  $\Gal(\tilde{L}/Q)$ is non-trivial, but has trivial intersection with the index two subgroup $\Gal(\tilde{L}/K)$.  Thus
  the inertia subgroup is order $2$ and we let $y\in \Gal(\tilde{L}/Q)$ be its non-trivial element.  
  We next will define a map
\begin{equation}\label{E:getphi}
 \phi:  \Gal(\tilde{L} /Q) \ra (\Gal(L/K) \times \Gal(L/K) ) \rtimes_\sigma S_2=: \Gal(L/K) \wr S_2,
\end{equation}
 where we write $\sigma$ for the generator of the cyclic group $S_2$ of order $2$ as well as the automorphism of 
 $\Gal(L/K) \times \Gal(L/K)$ that switches the factors.
 We first define   
 $$
 \phi|_{\Gal(\tilde{L} /K)}: \Gal(\tilde{L} /K) \ra \Gal(L/K) \times \Gal(L/K) ,
$$  
  where the first projection is the usual projection, and the second is conjugation by $y$ in $\Gal(\tilde{L} /Q) $ followed by the usual projection.  
We note that $ \phi|_{\Gal(\tilde{L} /K)}$ is injective and has image that surjects onto the first factor of $\Gal(L/K)$.
Then, we extend $ \phi|_{\Gal(\tilde{L} /K)}$ to a map $\phi :\Gal(\tilde{L} /Q)\ra \Gal(L/K) \wr S_2$, by sending $y$ to $(1,\sigma)$, and we can check this 
gives a homomorphism.
 We note that $\phi$ is injective on  $\Gal(\tilde{L} /Q)$, and its image surjects on $S_2$.

 We see above that a surjection in $\Sur(G_K^{\un,\v},G),$ i.e. an extension $L\sub K^{\un,\v}$ of $K$ and an isomorphism $ \Gal(L/K) \stackrel{\sim}{\ra} G$, gives an injection  $\Gal(\tilde{L}/Q) \ra (G\times G) \rtimes_\sigma S_2=:G \wr S_2$.  For a subgroup $G'\sub G \wr S_2$, we say a surjection $G_K^{\un,\v}\ra G$ is \emph{type} $G'$ if $G'$ is the image of the associated $\Gal(\tilde{L}/Q) \ra G \wr S_2$.  The following diagram shows the relationships between the fields and groups involved for the reader's convenience.

\begin{tikzpicture}[scale=2]
 \node (Q)   at (1,0)     {$Q$};
  \node (K)  at (0,1)         {$K$};
   \node (L)   at (0,2)       {$L$};
 \node (tL)   at (3,3)               {$\tilde{L}$};
\draw[-] (Q) to node[below] {$S_2$} (K);
 \draw (Q)     to node[right] {$G'$} (tL);
 \draw (K)    to node[left] {$G$} (L);
 \draw (K)    to node[right] {$\ker \pi$} (tL);
 \draw (L)  -- (tL);
\end{tikzpicture} 

    Given $Q$, we let $IQ_X$ be the set of  imaginary quadratic $K\sub \bar{Q}$ of $\Nm \Disc (K/Q)\leq X$.  We define, if the limit exists,
        $$
    \tilde{E}^-_{Q}(G,G')=\lim_{X\ra\infty} \frac{\sum_{K\in IQ_X}  \#\{\rho\in \Sur(G_K^{\un,\v}, G) | \rho \textrm{ is type }G'\}    }{\sum_{K\in IQ_X} 1}.
    $$
Let  $IQ_{=X}$ be the set of  imaginary quadratic $K\sub \bar{Q}$ of $\Nm \Disc (K/Q)= X$.
We define
      $$
   \tilde{E}^-_{Q,X}(G,G')=\frac{\sum_{K\in IQ_{=X}}
   \#\{\rho\in \Sur(G_K^{\un,\v}, G) | \rho \textrm{ is type }G'\}    }{\sum_{K\in IQ_{=X}} 1}.
    $$
    (The quantity $\tilde{E}^-_{Q,X}(G,G')$ is mainly of interest in the function field case, and in that case
  it is elementary to show that if $\lim_{n\ra\infty} \tilde{E}^-_{Q,q^{2n}}(G,G')$  exists then    $\tilde{E}^-_{Q}(G,G')$ exists and has the same value.)
If we omit the $Q$ subscript we mean $Q=\Q$.

In this paper, it is cleaner to include the data of an isomorphism between $\Gal(L/K)$ and $G$,
but of course one could also say that gives an overcounting of the true object of interest $L$.  Note that $\Aut(G)$ acts naturally on $G\wr S_2$, and write $\Aut_{G'}(G)$ for the subgroup of $\Aut(G)$ that fixes (setwise) the subgroup $G'\sub G\wr S_2$.  Then, we see a field $L$ is counted $\Aut_{G'}(G)$ times in $\tilde{E}^-(G,G').$
If $G$ is index $2$ in $G'$, then $\sigma$ gives an automorphism of $G$ and the elements $\Aut_{G'}(G)$ are equivalently the automorphisms of $G$ that commute with $\sigma$ (as an element of $\Aut(G)$).  
So we have
\begin{equation*}
E^-(G,G')=\frac{1}{|\Aut_{G'}(G)|}\tilde{E}^-(G,G').
\end{equation*}

  \subsubsection{Real quadratic}
  When $K/Q$ is real quadratic, there does not appear to be a natural map $\phi$ as in Equation~\eqref{E:getphi} because there is no clear way to choose $y$.  However, note that order $2$ elements of
  $\Gal(\tilde{L}/Q)\setminus \Gal(\tilde{L}/K)$ are the only possible images for inertia subgroups at any prime (since $\tilde{L}/K$ is unramified).  Of the real quadratic $K/Q$, none are unramified, and so there are order  $2$ elements of  $\Gal(\tilde{L}/Q)\setminus \Gal(\tilde{L}/K)$.  As in the last section, any such choice $y$ of an order $2$ element of  $\Gal(\tilde{L}/Q)\setminus \Gal(\tilde{L}/K)$ gives an injection 
  $\phi: \Gal(\tilde{L}/Q)\ra \Gal(L/K)\wr S_2$.
  
  Then a surjection $\rho\in \Sur(G_K^{\un,\v},G),$ i.e. an extension $L\sub K^{\un,\v}$ of $K$ and an isomorphism $ \Gal(L/K) \stackrel{\sim}{\ra} G$, \emph{and} an order $2$ element $y$ of  $\Gal(\tilde{L}/Q)\setminus \Gal(\tilde{L}/K)$ (which we call a \emph{\twist} for $\rho$), gives an injection of $\Gal(\tilde{L}/Q) \ra (G\times G) \rtimes_\sigma S_2=:G \wr S_2$.  For a subgroup $G'\sub G \wr S_2$, we say $(\rho,y)$ is \emph{type} $G'$ if $G'$ is the image of the associated $\Gal(\tilde{L}/Q) \ra G \wr S_2$.
  
    Given $Q,\v$, we let $RQ_X$ (resp. $RQ_{=X}$) be the set of  real quadratic $K\sub \bar{Q}$ of $\Nm \Disc (K/Q)\leq X$ (resp. $\Nm \Disc (K/Q)= X$).  We define,  if the limit exists,
        $$
    \tilde{E}^+_{Q}(G,G')=\lim_{X\ra\infty} \frac{\sum_{K\in RQ_X}  \#\{(\rho,y) | \rho\in \Sur(G_K^{\un,\v}, G), y\textrm{ \twist for }\rho , (\rho,y) \textrm{ is type }G'\}    }{\sum_{K\in RQ_X} 1}.
    $$
and
      $$
   \tilde{E}^+_{Q,X}(G,G')=\frac{\sum_{K\in RQ_{=X}}  \#\{(\rho,y)  | \rho\in \Sur(G_K^{\un,\v}, G), y\textrm{ \twist for }\rho , (\rho,y) \textrm{ is type }G'\}      }{\sum_{K\in RQ_{=X}} 1}.
    $$
If we omit the $Q$ subscript we mean $Q=\Q$.

Again, given $G$ and $G'$, there is a group theoretic factor $A(G,G')$ that tells us how many different choices $(\rho,y)$ there are for a single a field $L$ that give type $G'$.  We have
$$
E^+(G,G')=\frac{1}{A(G,G')}\tilde{E}^+(G,G').
$$
We can write $A(G,G')$ more simply when $G'$ is good.
\begin{lemma}
If $G'$ is good for a finite group $G$, then
$$
A(G,G')=|c||\Aut_{G'}(G)|.
$$
\end{lemma}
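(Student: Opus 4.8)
The plan is to count, for a fixed field $L$ (with $L\subset K^{\un,\v}$ and $K/Q$ real quadratic), the number of pairs $(\rho,y)$ of type $G'$, where $\rho\in\Sur(G_K^{\un,\v},G)$ and $y$ is a twist for $\rho$, i.e.\ an order-$2$ element of $\Gal(\tilde L/Q)\setminus\Gal(\tilde L/K)$. The quantity $A(G,G')$ is by definition the number of such pairs per $L$, so it suffices to show this equals $|c||\Aut_{G'}(G)|$ whenever $G'$ is good. First I would fix one reference pair $(\rho_0,y_0)$ of type $G'$, which identifies $\Gal(\tilde L/Q)$ with the subgroup $G'\subset G\wr S_2$ in such a way that $y_0\mapsto(1,\sigma)$; every other valid pair is then obtained by post-composing with an automorphism, so the count becomes an orbit count inside $G'$ under the relevant automorphism group.

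The key steps, in order. (1) Show that replacing $\rho_0$ by another surjection $\rho$ with the same kernel (same field $L$) amounts to composing $\rho_0$ with an element of $\Aut(G)$; the resulting injection $\Gal(\tilde L/Q)\to G\wr S_2$ changes by the induced action of $\Aut(G)$ on $G\wr S_2$, and the pair stays of type $G'$ precisely when that automorphism lies in $\Aut_{G'}(G)$. (2) Show that, with $\rho$ fixed, the freedom in choosing the twist $y$ is exactly the choice of an order-$2$ element of $\Gal(\tilde L/Q)\setminus\Gal(\tilde L/K)$, which under the identification with $G'$ is an order-$2$ element of $G'\setminus\ker\pi$; here ``good'' enters, since it guarantees all such elements form a single conjugacy class $c$, and the set of them has size $|c|$. (3) Check that the two freedoms are independent and do not overcount: changing $y$ within its $G'$-conjugacy class while keeping $\rho$ fixed cannot be undone by an element of $\Aut_{G'}(G)$ leaving $\rho$ fixed, because such an automorphism must also fix $y_0=(1,\sigma)$ (as it commutes with $\sigma\in\Aut(G)$, using the description of $\Aut_{G'}(G)$ recalled before the real-quadratic discussion), so distinct twists give genuinely distinct pairs. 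Multiplying the $|\Aut_{G'}(G)|$ choices of $\rho$ by the $|c|$ choices of $y$ gives $A(G,G')=|c||\Aut_{G'}(G)|$.

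The main obstacle I expect is step (3): carefully disentangling the action of $\Aut_{G'}(G)$ on the pair $(\rho,y)$ from the conjugation action that moves $y$ around its class $c$, and confirming there is no residual identification (no nontrivial automorphism in $\Aut_{G'}(G)$ simultaneously fixing $\rho$ and permuting the twists). This is exactly the place where the goodness hypothesis is used in an essential way — without it the order-$2$ elements outside $\ker\pi$ could split into $N_{G'}>1$ conjugacy classes of possibly different sizes, and the clean product formula would fail; it also echoes the parallel bookkeeping in the imaginary-quadratic case (Section~\ref{S:imagsetup}), where only $\Aut_{G'}(G)$ appears because the twist $y$ is forced.
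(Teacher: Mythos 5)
Your step (2) contains a genuine gap: it is not true that, with $\rho$ fixed, every twist $z$ gives a pair $(\rho,z)$ of type $G'$. Writing $z=kyk^{-1}$ with $k\in\Gal(\tilde L/K)$, the map $\phi_z$ associated to $(\rho,z)$ satisfies $\phi_z=g\,\phi_y\,g^{-1}$ for $g=(1,d)\in G\times G$ with $d=\rho(k)\rho(y^{-1}ky)^{-1}$, so $\im(\phi_z)=g\,G'\,g^{-1}$, and this equals $G'$ only when $g$ normalizes $G'$, which fails in general. Already for $(G,G')=(A_4,S_4)$, embedding $S_4\sub A_4\wr S_2$ via $a\mapsto(a,\tau_0 a\tau_0^{-1})$ and $\tau_0\mapsto(1,\sigma)$ for the transposition $\tau_0=(12)$, the element $d$ ranges over commutators $[\rho(k),\tau_0]$ and includes $3$-cycles; conjugation by $(1,(132))$ sends $((12)(34),(12)(34))\in S_4$ to $((12)(34),(13)(24))\notin S_4$. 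So the valid pairs $(\rho,y)$ do not form a product of independent choices as your proof requires.

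The paper's argument instead adjusts $\rho$ together with $y$: given $(\rho,y)$ of type $G'$ and any other twist $z=kyk^{-1}$, it replaces $\rho$ by the conjugate $\psi=\rho(k)^{-1}\rho(\cdot)\rho(k)$ and verifies that $(\psi,z)$ again has type $G'$. This realizes every twist, and then your step (1) (which is correct and matches the paper's final paragraph) shows that for each twist $z\in c$ the valid $\rho$'s form a coset of $\Aut_{G'}(G)$. The total is a union of $|c|$ such cosets indexed by $c$ rather than a product, but still of size $|c||\Aut_{G'}(G)|$. The precise error in your proposal is the independence asserted between the two freedoms, and your step (3)'s distinctness check does not repair the fact that step (2)'s starting claim is false.
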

\begin{proof}
Let $K/Q$ be a quadratic extension, and $L/K$ with $\rho: \Gal(L/K)\isom G$ with twist $y$.   Suppose that $(\rho,y)$ is type $G'$. 

Now let $z$ be an order $2$ element of $\Gal(\tilde{L}/Q)\setminus \Gal(\tilde{L}/K)$.  Since $G'$ is good, we have that $y$ and $z$ are conjugate in $\Gal(\tilde{L}/Q)$.  Further, we can assume $y=z^k$ for some $k \in \Gal(\tilde{L}/K)$ since
$\Gal(\tilde{L}/Q) = \Gal(\tilde{L}/K) \cup y\Gal(\tilde{L}/K)$ (where $z^k$ denotes right conjugation of $z$  by $k$). 
If we define $\psi :\Gal(L/K)\ra G$ so that $\psi(w)=\rho(k)^{-1}\rho(w)\rho(k)$, then in the map $\Gal(\tilde{L}/Q)\ra G'$ given by $(\psi,z)$, we have that for $x\in \Gal(\tilde{L}/K)$,
\begin{align*}
x \mapsto &(\psi(x),\psi(x^z))=(\psi(x),\psi(z^{-1}xz))=(\psi(x),\psi(ky^{-1} k^{-1}x kyk^{-1} ))\\
=&(\psi(k) \psi(k^{-1}xk) \psi(k)^{-1},\psi(k) \psi((k^{-1}xk)^y) \psi(k^{-1} ))\\
=&(\rho(k^{-1}xk) , \rho((k^{-1}xk)^y) ).
\end{align*}
In particular, we see the map $\Gal(\tilde{L}/Q)\ra G'$ given by $(\psi,z)$ has the same image, $G'$, as the map given by $(\rho,y)$.  

Now if $(\rho,y)$ is type $G'$, and $\rho': \Gal(L/K)\isom G$, then we can write $\rho'=\alpha \circ \rho$ for some $\alpha\in \Aut(G)$, and then
 $(\rho',y)$ is type $G'$ if and only if $\alpha$ fixes $G'$.
  So we have $|c|$ choices for the twist, and for each twist, exactly $\Aut_{G'}(G)$ choices for the map $\Gal(L/K)\isom G$ to get type $G'$.

\end{proof}

So when $G'$ is good
\begin{equation*}
E^+(G,G')=\frac{1}{|c||\Aut_{G'}(G)|}\tilde{E}^+(G,G').
\end{equation*}

\subsection{Requirement of admissibility}

Now we will see why the only possible types are the \sa $G'$.
\begin{proposition}\label{P:whysa}
Let $Q$ be $\Q$ or $\F_q(t)$, and 
let $G$ be a finite group.
If $K/Q$ is a quadratic extension with $\rho\in \Sur(G_K^{\un,\v}, G)$ such that $\rho$ (or $(\rho,y)$) is type $G'$,
then $G'$ is \sa.
\end{proposition}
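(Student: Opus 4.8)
The plan is to verify each of the three defining conditions of admissibility directly from the construction of $G'$ as the image of $\phi:\Gal(\tilde L/Q)\to G\wr S_2$. Recall $G'$ is \sa{} if (i) $G'$ contains $(1,\sigma)$, (ii) $G'$ is generated by order-$2$ elements with nontrivial image in $S_2$, and (iii) $\ker(\pi:G'\to S_2)$ surjects onto the first factor of $G$. I would treat the imaginary and real cases together, observing that in both cases $G'$ is the image of an injection $\phi:\Gal(\tilde L/Q)\to G\wr S_2$ constructed from a choice of an order-$2$ element $y\in\Gal(\tilde L/Q)\setminus\Gal(\tilde L/K)$ (in the imaginary case $y$ is forced to be the inertia generator at $\v$; in the real case $y$ is the given \twist), with $\phi(y)=(1,\sigma)$.

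Condition (i) is then immediate: $(1,\sigma)=\phi(y)\in\im\phi=G'$. For condition (iii), the kernel of $\pi$ is exactly $\phi(\Gal(\tilde L/K))$, and by the discussion following Equation~\eqref{E:getphi}, $\phi|_{\Gal(\tilde L/K)}$ is injective with image surjecting onto the first factor of $G$ — this is precisely the content of the sentence ``$\phi|_{\Gal(\tilde L /K)}$ is injective and has image that surjects onto the first factor of $\Gal(L/K)$'' applied via the identification $\Gal(L/K)\isom G$. So (iii) holds essentially by construction.

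The main work — and the step I expect to be the real obstacle — is condition (ii): that $G'$ is generated by its order-$2$ elements lying outside $\ker\pi$. Here I would use that $\tilde L/K$ is unramified and split over $\v$, so every ramified (or ramified-at-$\v$, for archimedean $\v$) prime of $Q$ has inertia in $\Gal(\tilde L/Q)$ mapping to a subgroup that meets $\Gal(\tilde L/K)$ trivially; since such inertia subgroups are nontrivial (at least at the primes dividing $\disc(K/Q)$, which is nonempty as $K/Q$ is ramified somewhere), and have trivial intersection with the index-$2$ subgroup $\Gal(\tilde L/K)$, their generators are order-$2$ elements outside $\Gal(\tilde L/K)$. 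The key classical input is that $\tilde L$ is generated over $Q$ by the inertia at ramified primes together with $K$ — more precisely, the subgroup $H\leq\Gal(\tilde L/Q)$ generated by all such inertia elements has fixed field unramified everywhere over $Q$ and split at $\v$, hence (since $Q=\Q$ has no unramified extensions, and $\F_q(t)$ has no extensions unramified everywhere and split at $\v$) equals the fixed field of $K$ at most... but we need it to be all of $\Gal(\tilde L/Q)$. One gets this by noting $\tilde L/\tilde L^H$ is unramified over $\v$ and everywhere except possibly where $\tilde L^H/Q$... — I would instead argue that $H\cdot\Gal(\tilde L/K)=\Gal(\tilde L/Q)$ (since $H\not\subset\Gal(\tilde L/K)$ as $K/Q$ ramifies) and that $H\cap\Gal(\tilde L/K)$ generates $\Gal(\tilde L/K)$ because $\tilde L/L\cdot(\text{conjugate})$ and $\tilde L/K$ is the maximal subextension unramified away from $\v$ and split at $\v$ — concretely, the fixed field of the normal closure of $H\cap\Gal(\tilde L/K)$ in $\Gal(\tilde L/K)$ is an extension of $K$ inside $\tilde L$ unramified everywhere and split at $\v$ over $K$... wait, it is unramified over $Q$, hence over $K$, forcing it to be $K$ itself by minimality of how $\tilde L$ sits inside $K^{\un,\v}$-closure. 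Pushing through $\phi$, these order-$2$ inertia elements map to order-$2$ elements of $G'$ outside $\ker\pi$, and they generate; so (ii) follows. The delicate point to get right is the precise statement that the inertia subgroups at ramified primes, together with $\Gal(\tilde L/K)$, generate $\Gal(\tilde L/Q)$, and that moreover their intersection with $\Gal(\tilde L/K)$ generates $\Gal(\tilde L/K)$ — this uses that $\tilde L$ is by definition the Galois closure over $Q$ of a field $L\subset K^{\un,\v}$, so $\Gal(\tilde L/K)$ is generated by the $\Gal(\tilde L/Q)$-conjugates of $\Gal(\tilde L/L)$, and conjugating inertia moves ramification around in a controlled way. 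I would write this out carefully as it is the crux of the proposition.
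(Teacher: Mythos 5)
Your handling of conditions (i) and (iii) is correct and matches the paper exactly: $(1,\sigma)$ lies in $G'$ because $\phi(y)=(1,\sigma)$, and the discussion after Equation~\eqref{E:getphi} gives directly that $\phi|_{\Gal(\tilde L/K)}$ is injective with image surjecting onto the first factor.

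For condition (ii), however, you have talked yourself into a needlessly complicated argument that contains a real gap, and you've missed the one observation that makes the whole thing immediate. The paper's proof considers the subgroup $H\subseteq G'$ generated by \emph{all} order-$2$ elements of $G'\setminus\ker\pi$, and notes that this generating set is closed under conjugation (conjugation preserves order, and $\ker\pi$ is normal), so $H$ is automatically a normal subgroup of $G'$. Once you have normality, the rest is one line of Galois theory: since $\tilde L/K$ is unramified and split at $\v$, every inertia subgroup and the decomposition group at $\v$ in $\Gal(\tilde L/Q)$ map into $H\cup\{1\}$ under $\phi$; if $H$ were a proper subgroup, $\phi^{-1}(H)$ would be a proper normal subgroup of $\Gal(\tilde L/Q)$ containing all inertia and the decomposition at $\v$, so its fixed field would be a nontrivial extension of $Q$ unramified everywhere and split at $\v$ --- which does not exist for $Q=\Q$ or $Q=\F_q(t)$. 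No generation argument, no induction on subfields.

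Your detour --- trying to show $H\cdot\Gal(\tilde L/K)=\Gal(\tilde L/Q)$ and then that $H\cap\Gal(\tilde L/K)$ generates $\Gal(\tilde L/K)$ --- runs into a genuine problem at the step where you look at the fixed field of the normal closure of $H\cap\Gal(\tilde L/K)$ in $\Gal(\tilde L/K)$. That fixed field is an extension of $K$, so it contains $K$, so it is certainly \emph{not} unramified over $Q$ (as $K/Q$ is ramified); the self-correction ``wait, it is unramified over $Q$'' is simply false. And the observation that this field is unramified-and-split-at-$\v$ over $K$ is true but vacuous, since \emph{every} subextension of $\tilde L/K$ has that property, so it gives no purchase at all on forcing the field to equal $K$. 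You flagged this as the crux and said you would ``write it out carefully''; the carefulness needed is really just the conjugation-invariance/normality step, which lets you collapse the whole delicate generation argument into a single Galois-theoretic non-existence statement.
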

\begin{proof}
By the definition of type, if $\rho$ (or $(\rho,y)$) is type $G'$,  then $G'$ contains $(1,\sigma)$ and the 
 kernel of $\pi: G' \ra S_2$ surjects onto the first factor of $G$. So if we suppose we have a type $G'$ that is not \sa, it must have that the order  $2$ elements of $G'\setminus \ker\pi$ do not generate $G$.
Since $\tilde{L}/K$ is unramified everywhere and split completely at $\v$, the only possible non-trivial images for inertia groups, or for the decomposition group at $\v$, are order  $2$ elements of $G'\setminus \ker\pi$.
  If these elements do not generate all of $G'$, they generate a proper subgroup $H$ of $G'$.  
  Since the set of  order  $2$ elements of $G'\setminus \ker\pi$ is closed under conjugacy, $H$ is normal.
By Galois theory, $H$ corresponds to a non-trivial extension of $Q$ that is unramified everywhere and split completely at $\infty$, which does not exist.
\end{proof}

\subsection{Reduced Schur multiplier, $H_2(G',c)$}\label{S:Hdefs}
In this section, we will define the \emph{reduced Schur multiplier} $H_2(G',c)$ of a finite group $G'$ for a union $c$ of conjugacy classes of $G'$, following
Venkatesh and Ellenberg \cite[Section 2.4]{Venkatesh2010}.
Let $\SC$ be a Schur cover of $G'$ so we have an exact sequence
$$
1\ra H_2(G',\Z)\ra \SC \ra G' \ra 1
$$
by the Schur covering map.
For $x,y\in G'$ that commute, let $\hat{x}$ and $\hat{y}$ be arbitrary lifts to $\SC$, and let $\langle x,y\rangle$
be the commutator $[\hat{x},\hat{y}]\in \SC$, which actually lies in 
$ H_2(G',\Z)$ since $x$ and $y$ commute.  
Alternatively, $\langle x,y\rangle$ is the image of a canonical generator of $H_2(\Z\times \Z,\Z)=\wedge^2 \Z^2$ in $H_2(G',\Z)$ under the map induced from the map $\Z\times \Z\ra G'$ taking $(1,0)$ to $x$ and $(0,1)$ to $y$.
  If we take the quotient of the above exact sequence by all $\langle x,y\rangle$ for $x\in c$ and $y$ commuting with $x$, we obtain an exact sequence 
\begin{equation}\label{E:defH}
1\ra H_2(G',c)\ra \widetilde{G'}_c \ra G' \ra 1,
\end{equation}
which is still a central extension, defining $H_2(G',c)$ and  $\widetilde{G'}_c$ . 
Note that since a Schur cover of a group $G'$ is not unique, these definitions implicitly depend on the choice of Schur cover. 
Throughout this paper we will just pick one Schur cover for each $G'$ and use that. 
However, the alternative description of  $\langle x,y\rangle$ shows that $H_2(G',c)$ does not depend on the choice of Schur cover.

\section{Lifting invariant}\label{S:lift}
The value $|H_2(G',c)[|\mu_{\F_q(t)}|]|$ appearing in Theorem~\ref{T:FF} comes from the number of components of a moduli space for the objects counted by $E^\pm_{\F_q(t),q^{2n}}(G,G')$.  Since in the analogy where $\F_q(t)$ is replaced by $\Q$, there is no analogous moduli space with components, in this section we give a different construction of the invariant that separates components.  Our construction works over any global field. This gives additional weight to using Theorem~\ref{T:FF} as a motivation for Conjecture~\ref{C:E}.  As discussed in the introduction, our construction is strongly motivated by that of Venkatesh and Ellenberg \cite[Section 2.5]{Venkatesh2010} but in order to deal with even $|G|$ (which are the main interest of this paper) we require a different construction than theirs, and we prove that our invariant actually lies in $|H_2(G',c)[|\mu_K|]|$ and not just $|H_2(G',c)|$.

 \subsection{Definition of the invariant}\label{S:defI}

First we will give the definition of the invariant as briefly as possible, and then we will prove that it is well-defined, does not depend on most of our choices, and behaves as we claim.

 \begin{notation}
For a field $K$, let $\mu_K$ be the group of roots of unity of $K$.  
Let  $G_K:=\Gal(\bar{K}/K)$.
For a positive integer $n$ and a field $K$, we write $K(\mu_n)$ to denote the minimal field extension of $K$ that includes all the $n$th roots of unity from some algebraic closure of $K$.
 \end{notation}

\begin{definition}\label{D:defI}
Let $F$ be a finite group, and $\pi: F \ra \Z/2\Z$ a surjective homomorphism.   Let $c$ be the set of order two elements of $F\setminus \ker \pi$, and \emph{suppose that $c$ is a single conjugacy class of $F$}. 
Let $K$ be a global field whose characteristic does not divide $|F|$, and let $\phi: G_K\ra F$ be a continuous, tame homomorphism such that all inertia has image in $c\cup \{1\}$.  (A tame homomorphism is one in which all wild inertia subgroups have trivial image, or equivalently the associated extension to the kernel of the homomorphism is not wildly ramified.) Then we define an invariant of $\phi$ as follows.
Choose a Schur cover of $F$ and let $\tilde{F}_c$ be as defined in Section~\ref{S:Hdefs} (implicitly depending on the Schur cover).
  Let $L=K(\mu_{4 |\tilde{F}_c|})$ and $\tilde{\phi}:  G_L \ra \tilde{F}_c$ be a tame lifting of $\phi$.  Let $u\in \mu_L$. 
For each non-archimedian place $v$ of $L$, let $\gamma_v\in G_L$ be an element of an inertia subgroup at $v$ that, in the maximal tame, abelian, exponent $|\tilde{F}_c|$ quotient of $G_L$, has the same image as $u$ (where $u$ maps to this quotient via the local class field theory isomorphism at $v$).
 Let $c_0$ be an element of $c$.  For $f\in \tilde{F}_c$ that maps to $c\cup \{1\}$, let $z(f)$ be the unique element
 in $\tilde{F}_c$ in the conjugacy class of $f$ that maps to either $1$ or $c_0$ in $F$.  Then we define the invariant as a product over finite places $v$ of $L$:
$$
I_{F,\pi,K}(\phi,u):=\prod_v z( \tilde{\phi}(\gamma_v)),
$$
which we will show below is in $H_2(F,c)[|\mu_K|]$.
\end{definition}

We will first show that the definition makes sense--that $\tilde{\phi}$ and $z$ exist.  Then, we will show that $I_{F,\pi,K}(\phi,u)$ does not depend on several choices we made: the order of multiplication, the choice of the $\gamma_v$,
the choice of $\tilde{\phi} $, and the choice of $c_0$.  Finally, we will show that $I_{F,\pi,K}(\phi,u)\in H_2(F,c)[|\mu_K|]$ and that the map
\begin{align*}
\mu_L &\ra H_2(F,c)[|\mu_K|]\\
u &\mapsto I_{F,\pi,K}(\phi,u)
\end{align*}
is a homomorphism.
So, given $F,\pi,$ and $K$, we can take $I_{F,\pi,K}$ to define an invariant of
continuous, tame $\phi: G_K\ra F$ valued in $\Hom(\mu_L, H_2(F,c)[|\mu_K|] )$.  Clearly
there are $|H_2(F,c)[|\mu_K|]|$ values for this invariant.
We do not show that the invariant is independent of choice of the choice of Schur cover, though we see that the number of possible invariants is independent of this choice.

First we show that the tame lift $\tilde{\phi}$ exists.  

\begin{lemma}
Let $F$ be a finite group, and $c$ a union of conjugacy classes of order $2$ elements of $F$.
Let $L$ be a global field such that $4\mid |\mu_L|$ and for every element $f\in \tilde{F}_c$ we have
$f^{|\mu_L|}=1$.
Let $\phi:G_{L}\ra F$ be a continuous, tame homomorphism such that all inertia groups have image in $c \cup \{1\}$.
There is a tamely ramified lift $\tilde{\phi}:G_{L}\ra \tilde{F}_c$ of $\phi:G_{L}\ra F.$
\end{lemma}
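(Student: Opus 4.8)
The plan is to reduce the existence of a tame lift to a standard obstruction-theory argument combined with the Grunwald--Wang theorem, using the hypotheses on $|\mu_L|$ to kill the local obstructions. Recall from Equation~\eqref{E:defH} that we have a central extension
\[
1\ra H_2(F,c)\ra \tilde{F}_c \ra F \ra 1,
\]
so lifting $\phi: G_L\ra F$ to $\tilde{\phi}: G_L\ra \tilde{F}_c$ is controlled by a class in $H^2(G_L, H_2(F,c))$ (with trivial action, since the extension is central), namely the pullback $\phi^*[\tilde{F}_c]$ of the extension class. A lift as a homomorphism of groups exists if and only if this class vanishes; a lift that is moreover tamely ramified requires a bit more care at the ramified places, but the key point is that $H^2(G_L, M)$ for a finite module $M$ and a global field $L$ injects (up to the well-understood archimedean contributions) into $\bigoplus_v H^2(G_{L_v}, M)$ by the Poitou--Tate / Brauer--Hasse--Noether exact sequence, so it suffices to control the local obstructions.

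First I would observe that at every unramified place $v$ the local obstruction vanishes automatically, since $G_{L_v}^{\mathrm{ur}}\cong \hat{\Z}$ has cohomological dimension $1$, so there $\phi|_{G_{L_v}}$ lifts. At a ramified place $v$, by tameness the image of inertia is cyclic (generated by an element of $c$, hence of order $2$), and the local Galois group acting tamely is (topologically) generated by two elements $\gamma$ (a generator of tame inertia) and $\varphi$ (Frobenius) subject to the single relation $\varphi\gamma\varphi^{-1}=\gamma^{q_v}$, where $q_v$ is the residue cardinality. To lift $\phi|_{G_{L_v}}$ tamely I need to choose lifts $\tilde\gamma,\tilde\varphi\in\tilde{F}_c$ of $\phi(\gamma),\phi(\varphi)$ with $\tilde\gamma$ of order exactly $2$ (so that inertia still maps into $c\cup\{1\}$) and satisfying $\tilde\varphi\tilde\gamma\tilde\varphi^{-1}=\tilde\gamma^{q_v}$. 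Since $\phi(\gamma)\in c\cup\{1\}$ has order dividing $2$ and the extension is central by $H_2(F,c)$, the possible lifts of $\phi(\gamma)$ differ by $H_2(F,c)$; here the hypothesis $f^{|\mu_L|}=1$ for all $f\in\tilde{F}_c$, together with $4\mid|\mu_L|$, will be used to guarantee that among these lifts we can pick one, call it $z(\phi(\gamma))$ in the notation of Definition~\ref{D:defI}, that has order exactly $2$ (equivalently, squares to $1$ rather than to a nontrivial element of $H_2(F,c)$) and that the residue-field-size distortion $q_v$ — which by Grunwald--Wang we may take to be $\equiv 1$ in the relevant exponent after enlarging nothing, since $L$ already contains enough roots of unity — acts trivially on $H_2(F,c)$, so the relation $\tilde\varphi\tilde\gamma\tilde\varphi^{-1}=\tilde\gamma^{q_v}$ becomes solvable. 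Thus each local obstruction class vanishes.

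Next I would assemble these local lifts into a global one. The vanishing of every local component of $\phi^*[\tilde{F}_c]$ in $\bigoplus_v H^2(G_{L_v},H_2(F,c))$, via the Brauer--Hasse--Noether / Poitou--Tate exact sequence for the finite module $H_2(F,c)$ over the global field $L$, forces $\phi^*[\tilde{F}_c]$ itself to vanish in $H^2(G_L,H_2(F,c))$ — here I should be careful about real archimedean places when $L$ is a number field, but the condition $4\mid|\mu_L|$ forces $L$ to be totally imaginary (or of positive characteristic), so there are no real places to worry about, which is precisely why that hypothesis is imposed. Hence a group-theoretic lift $\tilde\phi: G_L\ra\tilde{F}_c$ exists. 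Finally I would promote it to a tame lift: an arbitrary lift may fail to be tame only at the finitely many places where $\phi$ is ramified (at unramified-for-$\phi$ places any lift is automatically unramified, since the image of a pro-$p$ wild inertia would have to land in $\ker(\tilde{F}_c\ra F)=H_2(F,c)$, whose order is prime to the residue characteristic), and at those places I can correct $\tilde\phi$ by twisting by a continuous homomorphism $G_L\ra H_2(F,c)$ that matches the prescribed tame local lifts constructed above — such a global character exists by Grunwald--Wang (again using that $L$ has no real places), since we are only prescribing its behavior at finitely many places and on the cyclic tame quotients there. This yields the desired tamely ramified $\tilde\phi$.

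The main obstacle I expect is the local analysis at the ramified places: precisely checking that one can choose the lift $z(\phi(\gamma))$ of the order-$2$ inertia generator to again have order exactly $2$ in $\tilde{F}_c$, and that this choice is compatible with the Frobenius conjugation relation $\tilde\varphi\tilde\gamma\tilde\varphi^{-1}=\tilde\gamma^{q_v}$. This is exactly where the numerical hypotheses ($4\mid|\mu_L|$ and $f^{|\mu_L|}=1$ for all $f\in\tilde{F}_c$) are doing real work — they ensure both that $L$ is totally imaginary (killing archimedean obstructions) and that the cyclotomic character governing the Frobenius action on roots of unity, hence on the relevant torsion in $H_2(F,c)$, is trivial modulo the exponent of $\tilde{F}_c$ as seen from $G_L$. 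Everything else is a formal assembly via Poitou--Tate duality and Grunwald--Wang.
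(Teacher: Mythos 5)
Your overall strategy — reduce to local obstructions via Poitou--Tate duality, construct tame local lifts, then correct a global lift using Grunwald--Wang — is the same as the paper's, so the skeleton is sound. However, the local analysis at ramified places, which you rightly flag as the main obstacle, is handled incorrectly, and this is a real gap rather than a detail to be filled in.

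You want to choose a lift $\tilde\gamma$ of $\phi(\gamma)\in c$ that has \emph{order exactly $2$} in $\tilde{F}_c$. Such a lift need not exist: all lifts of $\phi(\gamma)$ differ by elements of the central subgroup $A:=H_2(F,c)$, so they have the form $Y h$ with $h\in A$, and $(Yh)^2 = Y^2 h^2$ need not vanish for any $h$ if $Y^2\notin A^2$. Moreover, requiring order $2$ is not what is needed: the lift $\tilde\phi$ need not have inertia image in any distinguished set, it only needs to be \emph{tame}, i.e.\ inertia of order prime to $p_v$ and satisfying the relation $\tilde\varphi\tilde\gamma\tilde\varphi^{-1}=\tilde\gamma^{\Nm v}$. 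The paper instead takes an arbitrary lift $Y$ of $\phi(\gamma)$, lets $d$ be the exact power of $p_v$ dividing the order of $Y$, and sets $Z=Y^d$. Since $p_v$ is odd (as $\phi(\gamma)$ has order $2$ and ramification is tame), $Z$ is still a lift of $\phi(\gamma)$; its order is prime to $p_v$; and the hypothesis $Y^{|\mu_L|}=1$ together with $|\mu_L|\mid p_v^k(\Nm v -1)$ gives $Z^{\Nm v -1}=1$, so the tame relation holds. Your invocation of Grunwald--Wang to make $q_v\equiv 1$ ``in the relevant exponent'' does not make sense — Grunwald--Wang constructs global characters, it does not alter residue field sizes — and your appeal to the function $z$ from Definition~\ref{D:defI} is a misuse: $z(f)$ selects the conjugate of $f$ mapping to $1$ or $c_0$, not a lift of prescribed order.

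There is a secondary gap in the correction-to-tameness step: you assert that an arbitrary global lift can only fail to be tame at places where $\phi$ itself ramifies, on the grounds that $|H_2(F,c)|$ is prime to every residue characteristic. That is false in general for number fields (nothing in the hypotheses makes $|A|$ coprime to all residue characteristics). The correct finite set, as in the paper, is the set of places $v$ with $\Nm v$ not coprime to $|\tilde{F}_c|$; one then uses Grunwald--Wang (with $\mu_4\subset L$ to avoid the special case, which also forces $L$ totally imaginary) to build a global $\pi:G_L\ra A$ matching the tame local lifts at all of those places. In the function field case one instead notes that $f^{|\mu_L|}=1$ for all $f\in\tilde{F}_c$ forces $\operatorname{char} L\nmid |\tilde{F}_c|$, so no wild ramification is possible at all.
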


\begin{proof}
Let $A=H_2(F,c)=\ker(\tilde{F}_c\ra F)$. 
The obstruction to lifting $\phi$ to $\tilde{\phi}$ lies in $H^2(G_{L},A)$
\cite[I.5.7, Prop. 43]{Serre2002}, i.e. $\phi\in \Hom(G_{L},F)$ is in the image of $ \Hom(G_{L},\tilde{F}_c)$ if and only if it maps to $0$ in $H^2(G_{L},A)$.
The kernel of $H^2(G_{L},A)\ra \prod_v H^2(G_{L_v},A)$
is dual to the kernel of
$H^1(G_L,A^*)\ra \prod_v H^1(G_{L_v},A^*)$ \cite[Chapter I, Theorem 4.10]{Milne2006}, where $A^*:=\Hom(A,\bar{L})$
and the products are over places $v$ of $L$.
Since $L$ contains enough roots of unity to annihilate $A$, the $G_L$ action on $A^*$ is trivial and we have that $A\isom A^*$ as 
$G_{L}$ modules.
However, the map $H^1(G_L,A^*)\ra \prod_v H^1(G_{L_v},A^*)$ is clearly injective, since an element in the kernel would give an extension of $L$ split completely everywhere.

Thus, we can detect if $\phi$ lifts to some $\tilde{\phi}$ by considering the obstruction in
$$
\prod_v H^2(G_{L_v},A).
$$
So for each place $v$ of $L$, we will show we can lift the map $\phi$ when restricted to $G_{L_v}$, i.e. that the local obstruction is trivial.  Since $\phi$ is tamely ramified, we consider standard generators $x,y$ of the tame quotient of $G_{L_v}$ such that
$xyx^{-1}=y^{\Nm v}$.  
Let $Y$ be any lift in $\tilde{F}_c$ of $\phi(y)$, and $X\in \tilde{F}_c$ any lift of $\phi(x)$.  At unramified places, we can choose a lift of $\phi:G_{L_v}\ra F$ with $y\mapsto 1$ and $x \mapsto X\in\tilde{F}_c$.

At ramified places, since all ramification is order $2$, we must have $\Nm v$ is odd since the ramification is tame.
In this case, $\phi(y)\in c$ implies $\phi(y)$ has order $2$ and so commutes with $\phi(x)$, and 
by the definition of $\tilde{F}_c$ any lift of $\phi(y)$ to $\tilde{F}_c$ commutes with any lift of an element commuting with $\phi(x)$ to $\tilde{F}_c$.  Thus, $XYX^{-1}=Y$.  Let $p_v$ be the prime of which $\Nm v$ is a power.  Let $d$ be the maximum power of $p_v$ dividing the order of $Y$.  Let $Z=Y^d$.
Note that $XZX^{-1}=Z$.
For some non-negative integer $k$, we have $|\mu_L|\mid p_v^k(\Nm v -1)$.  
Since $Y^{|\mu_L|}=1$, we have  $Z^{\Nm v -1}=1$.
 Thus we can let $x\mapsto X$ and $y\mapsto Z$ to give a map from the tame quotient of  $G_{L_v}$ to $\tilde{F}_c$ lifting $\phi$.  This proves that there is some lift $\tilde{\phi}:G_{L}\ra \tilde{F}_c$ of $\phi:G_{L}\ra F.$
 We conclude there is some global lift of $\phi$ to a map $\tilde{\phi}: G_L \ra \tilde{F}_c$.
 
When $L$ is a function field, the hypothesis that
for every element $f\in \tilde{F}_c$ we have
$f^{|\mu_L|}=1$ implies that the characteristic of $L$ does not divide $|\tilde{F}_c|$ and $\tilde{\phi}$ must then be tamely ramified.
  When $L$ is a number field,
 we know at each place $v$ of $L$, there is a tamely ramified local lift $\psi_v$ of $\phi_v :G_{L_v}\ra F$ to $\tilde{F}_c$ by the construction above.  Note that $\tilde{\phi}^{-1} \cdot \psi_v$ gives a map from $ G_{L_v}$ to $A$.  Since $\mu_4\sub L$, by the Grunwald-Wang Theorem \cite[Ch 10. Thm. 5]{Artin1968}, there is a map $\pi: G_{L} \ra A$ which agrees with $\tilde{\phi}^{-1} \psi_v$ on $ G_{L_v}$ for each  of the finitely many places $v$ of $\tilde{K}$ whose norms are not relatively prime to $|\tilde{F}_c|$. 
  Then $\tilde{\phi} \cdot \pi$ is a map from $G_{L}$ to $\tilde{F}_c$, which at each place of $L$ that could possibly be wildly ramified, agrees with the tamely ramified map $\psi_v: G_{L_v}\ra \tilde{F}_c$.  The lemma follows.
\end{proof}

Next we show that the function $z$ used in Definition~\ref{D:defI} exists.
\begin{lemma}\label{L:conjabove}
Let $F$ be a finite group and $c$ a union of conjugacy classes of $F$.
Any conjugacy class of $\tilde{F}_c$ with image in $c\cup \{1\}\sub F$ is in bijection with its image in $F$.
\end{lemma}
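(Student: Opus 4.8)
The plan is to exploit the central extension structure of $\tilde{F}_c$ together with the fact that, by construction, every symbol $\langle x,y\rangle$ with $x\in c$ and $y$ commuting with $x$ has been killed in $\tilde{F}_c$. Write $\pi\colon \tilde{F}_c\to F$ for the quotient map, with (central) kernel $A=H_2(F,c)$. If $\mathcal{C}$ is a conjugacy class of $\tilde{F}_c$, then $\pi(\mathcal{C})$ is a single conjugacy class of $F$ (the image of a conjugacy class under a surjection is a conjugacy class), and $\mathcal{C}\to\pi(\mathcal{C})$ is automatically surjective. So the content of the lemma is injectivity of $\mathcal{C}\to\pi(\mathcal{C})$ under the hypothesis $\pi(\mathcal{C})\subseteq c\cup\{1\}$. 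The case $\pi(\mathcal{C})=\{1\}$ is immediate: then $\mathcal{C}\subseteq A$, which is central, so $\mathcal{C}$ is a single element. Hence we may assume $\pi(\mathcal{C})$ is a conjugacy class contained in $c$.

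Now suppose $\tilde{x},\tilde{x}'\in\mathcal{C}$ with $\pi(\tilde{x})=\pi(\tilde{x}')=x\in c$. Pick $\tilde{g}\in\tilde{F}_c$ with $\tilde{x}'=\tilde{g}\tilde{x}\tilde{g}^{-1}$, and set $g=\pi(\tilde{g})$. Applying $\pi$ gives $gxg^{-1}=x$, so $g$ commutes with $x$ in $F$. By the definition of $\tilde{F}_c$ in Section~\ref{S:Hdefs}, the commutator $\langle x,g\rangle$ (formed from arbitrary lifts of $x$ and $g$ to a Schur cover of $F$) maps to the identity in $\tilde{F}_c$, precisely because $x\in c$ and $g$ commutes with $x$. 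Since a commutator of two elements is unchanged when either element is multiplied by a central element, $\langle x,g\rangle$ does not depend on the choice of lifts, and its vanishing in $\tilde{F}_c$ says exactly that $\tilde{x}$ and $\tilde{g}$ commute there. Therefore $\tilde{x}'=\tilde{g}\tilde{x}\tilde{g}^{-1}=\tilde{x}$, which establishes injectivity and hence the bijection.

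There is no real obstacle here; the only points requiring a careful sentence are that the image of a conjugacy class under $\pi$ is a single conjugacy class, and that $\langle x,g\rangle$ is well-defined independently of the chosen Schur-cover lifts so that its image in $\tilde{F}_c$ is genuinely trivial under the relations imposed in the construction of $\tilde{F}_c$. Both are routine, so the proof is short.
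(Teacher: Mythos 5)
Your proof is correct and follows essentially the same two-step argument as the paper: handle the fiber over $1$ via centrality of $H_2(F,c)$, then use the defining relations of $\tilde{F}_c$ (the killed commutators $\langle x,g\rangle$ for $x\in c$ and $g$ commuting with $x$) to show that any conjugating element lifts to something commuting with the given lift.
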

\begin{proof}
An element above $1\in F$ is in the center of $\tilde{F}_c$, and so its conjugacy class is a single element and the lemma follows in this case.  Let $\bar{x}$ denote the image in $F$ of an element $x\in \tilde{F}_c$.
Let $d\in \tilde{F}_c$ with $\bar{d}\in c$.  Suppose $g\in \tilde{F}_c$ is such that $\bar{d}=\overline{gdg^{-1}}\in F$.
Then $\bar{g}$ commutes with $\bar{d}$ in $F$, and $\bar{d}\in c$ implies $g$ and $d$ commute in $\tilde{F}_c.$
Thus the lemma follows.
\end{proof}

Next we show that the order of the product in Definition~\ref{D:defI} does not matter.
\begin{lemma}
In Definition~\ref{D:defI}, the elements $z( \tilde{\phi}(\gamma_v))$ in the  product defining $I_{F,\pi,K}(\phi,u)$ all commute.
\end{lemma}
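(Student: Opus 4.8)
The plan is to reduce the statement to the single observation that every factor $z(\tilde{\phi}(\gamma_v))$ maps, under the central extension $\tilde{F}_c \to F$ of \eqref{E:defH}, into the two-element set $\{1,c_0\}$. First I would note that $\gamma_v$ lies in an inertia subgroup of $G_L$ at $v$, which is contained in an inertia subgroup of $G_K$ at the place of $K$ below $v$; since by hypothesis in Definition~\ref{D:defI} the map $\phi$ carries all inertia into $c\cup\{1\}$, we get $\phi(\gamma_v)\in c\cup\{1\}$, hence $\tilde{\phi}(\gamma_v)$ maps into $c\cup\{1\}$ in $F$, so that $z(\tilde{\phi}(\gamma_v))$ is defined (using Lemma~\ref{L:conjabove}). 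By the very definition of $z$, the element $z(\tilde{\phi}(\gamma_v))$ maps to $1$ or to $c_0$ in $F$.

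Next I would split into the two obvious cases. If $z(\tilde{\phi}(\gamma_v))$ maps to $1\in F$, then it lies in $A:=\ker(\tilde{F}_c\to F)=H_2(F,c)$, and since the extension \eqref{E:defH} is central, this element is central in $\tilde{F}_c$; hence it commutes with every other factor. If instead two factors $z(\tilde{\phi}(\gamma_v))$ and $z(\tilde{\phi}(\gamma_w))$ both map to $c_0$, then their quotient $z(\tilde{\phi}(\gamma_v))^{-1} z(\tilde{\phi}(\gamma_w))$ lies in $A$ and is therefore central in $\tilde{F}_c$; and two elements of a group whose quotient is central automatically commute. Running over all pairs $v,w$ then shows that all the factors pairwise commute, which is exactly the claim.

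I do not expect any genuine obstacle here: the content is entirely packed into the construction of $z$, which was arranged precisely so that each factor of $I_{F,\pi,K}(\phi,u)$ lands in the set $\{1,c_0\}\subset F$, a set on which the preimages in $\tilde{F}_c$ commute with one another for the trivial reason above. The only point deserving a sentence of care is verifying $\phi(\gamma_v)\in c\cup\{1\}$ so that $z$ is applicable, and this is immediate from the inertia hypothesis on $\phi$.
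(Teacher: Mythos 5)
Your proof is correct and takes essentially the same approach as the paper: both observe that each $z(\tilde{\phi}(\gamma_v))$ is either in the central subgroup $H_2(F,c)$ or lies above $c_0$, and that any two elements above $c_0$ differ by a central element, hence commute.
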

\begin{proof}
All the $z( \tilde{\phi}(\gamma_v))$ are either in $H_2(F,c)$, and thus central, or above $c_0$.
All elements above $c_0$ commute (since they only differ from one another by central elements).
\end{proof}

Next we show that the invariant defined does not depend on the choice of $\gamma_v$.
\begin{lemma}\label{L:anyg}
In Definition~\ref{D:defI}, the invariant $I_{F,\pi,K}(\phi,u)$ does not depend on the choice of $\gamma_v$.
\end{lemma}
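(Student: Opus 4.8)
The plan is to compare the invariant computed with a choice $\gamma_v$ against the one computed with another choice $\gamma_v'$ at a single place $v$ (since changing one $\gamma_v$ at a time suffices), keeping all other data in Definition~\ref{D:defI} fixed. By construction, both $\gamma_v$ and $\gamma_v'$ lie in inertia at $v$ and have the same image as $u$ in the maximal tame, abelian, exponent $|\tilde{F}_c|$ quotient of $G_L$. Hence $\gamma_v' \gamma_v^{-1}$ maps to $0$ in that quotient; since $\phi$ and $\tilde\phi$ are tame and all inertia at $v$ is cyclic of order prime to the residue characteristic, the restriction of $\tilde\phi$ to the inertia subgroup at $v$ factors through a cyclic group of order dividing $|\tilde{F}_c|$, so $\tilde\phi(\gamma_v)$ and $\tilde\phi(\gamma_v')$ lie in a common cyclic subgroup of $\tilde{F}_c$ and in fact $\tilde\phi(\gamma_v'\gamma_v^{-1})$ is a power of $\tilde\phi(\gamma_v)$ that is trivial modulo $H_2(F,c)$ — i.e. $\tilde\phi(\gamma_v)$ and $\tilde\phi(\gamma_v')$ have the same image in $F$.

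The key step is then to show $z(\tilde\phi(\gamma_v)) = z(\tilde\phi(\gamma_v'))$. If that common image in $F$ lies in $c$, both $\tilde\phi(\gamma_v)$ and $\tilde\phi(\gamma_v')$ are order-$2$ elements of $\tilde{F}_c$ up to the central kernel; but an element of $\tilde{F}_c$ lying over an element of $c$ is forced (by the definition of $\tilde{F}_c$, which kills the relevant commutators $\langle x,y\rangle$) to be the \emph{unique} lift in its $F$-fiber that squares to $1$ — this is precisely the uniqueness packaged in Lemma~\ref{L:conjabove} together with the relation making $c$-elements commute with everything commuting with them, so $\tilde\phi(\gamma_v)=\tilde\phi(\gamma_v')$ and a fortiori their $z$-values agree. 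If instead the common image in $F$ is trivial, then both $\tilde\phi(\gamma_v)$ and $\tilde\phi(\gamma_v')$ lie in the central subgroup $H_2(F,c)$, and we must argue that they are in fact equal: here $\tilde\phi(\gamma_v'\gamma_v^{-1}) = \tilde\phi(\gamma_v')\tilde\phi(\gamma_v)^{-1}$ is a central element which is a power of $\gamma_v$'s image that dies in the exponent-$|\tilde{F}_c|$ abelianized tame quotient, and since every element of $\tilde{F}_c$ has order dividing $|\tilde{F}_c|$, this power is trivial; on $z$-values, $z$ is the identity on central elements, so again the two contributions agree.

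I expect the main obstacle to be the central case: verifying carefully that the constraint "$\gamma_v$ and $\gamma_v'$ have the same image as $u$ in the maximal tame abelian exponent-$|\tilde{F}_c|$ quotient of $G_L$" really does pin down $\tilde\phi(\gamma_v)$ on the nose when the $F$-image is trivial, rather than merely up to something. The resolution is that inertia at $v$ in $G_L$ is procyclic, its tame part is $\widehat{\mathbb Z}^{(p')}$, and the image of $\gamma_v$ in the exponent-$|\tilde{F}_c|$ tame abelian quotient already determines $\tilde\phi(\gamma_v)$ because $\tilde\phi$ restricted to tame inertia at $v$ factors through an exponent-$|\tilde{F}_c|$ quotient — so two choices with the same image there give literally the same value under $\tilde\phi$. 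Once this is in hand the lemma follows, the contributions at $v$ being identical term by term, and all other places being untouched.
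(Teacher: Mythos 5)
Your proposal correctly identifies that the crux is showing two choices $\gamma_v,\gamma_v'$ with the same image (as $u$) in the maximal tame abelian exponent-$|\tilde{F}_c|$ quotient of $G_L$ give the same contribution $z(\tilde\phi(\gamma_v))=z(\tilde\phi(\gamma_v'))$, and you even flag the central case as the sticking point. But your ``resolution'' of that sticking point is circular: you assert that ``$\tilde\phi$ restricted to tame inertia at $v$ factors through an exponent-$|\tilde{F}_c|$ quotient --- so two choices with the same image there give literally the same value under $\tilde\phi$.'' That doesn't follow, because the quotient of tame inertia at $v$ that the constraint on $\gamma_v$ controls is not the full exponent-$|\tilde{F}_c|$ quotient of inertia; it is (at best, via local class field theory) a cyclic group of order $\gcd(\Nm v - 1, |\tilde F_c|)$, which can be a proper quotient. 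So $\lambda\equiv\lambda'$ only modulo $\gcd(\Nm v - 1, |\tilde F_c|)$, and without more information $\tilde\phi(y_v)^\lambda$ and $\tilde\phi(y_v)^{\lambda'}$ need not coincide. The missing ingredient, which is the heart of the paper's argument, is the roots-of-unity constraint: $L$ was defined as $K(\mu_{4|\tilde F_c|})$, so $|\tilde F_c|$ (prime to the residue characteristic $p$) divides $|\mu_L|/p^{\operatorname{ord}_p|\mu_L|}$, which injects into the residue field at $v$ and hence divides $\Nm v - 1$. This is exactly what forces $\tilde\phi(y_v)$ to have order dividing $\gcd(\Nm v - 1, |\tilde F_c|)$ and makes the congruence on $\lambda,\lambda'$ strong enough to conclude $\tilde\phi(y_v^\lambda)=\tilde\phi(y_v^{\lambda'})$. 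Your proof nowhere uses that $L$ contains these roots of unity, so the step cannot go through.

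Two further problems. First, you treat ``inertia at $v$'' as a single subgroup, so that $\gamma_v'\gamma_v^{-1}$ lives in one procyclic group; but the definition only requires each $\gamma_v$ to lie in \emph{an} inertia subgroup at $v$, and different choices need not lie in the same one. The paper handles this by writing $\gamma_v$ as a conjugate $g y_v^\lambda g^{-1}$ and observing that the invariant depends only on the conjugacy class of $\tilde\phi(\gamma_v)$, reducing to a fixed generator $y_v$. Second, your claim that a lift in $\tilde{F}_c$ of an element of $c$ is ``the unique lift in its $F$-fiber that squares to $1$'' is false: the relations defining $\tilde F_c$ kill commutators $\langle x,y\rangle$, not squares, and lifts of order-$2$ elements can have larger order; in general there are $|H_2(F,c)|$ lifts over a given element of $c$, distributed among several $\tilde F_c$-conjugacy classes (each of which, by Lemma~\ref{L:conjabove}, injects into $c$). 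Consequently ``same image in $F$'' does not by itself give ``same $z$-value,'' and the reduction you attempt would not suffice even if the congruence step were repaired.
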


\begin{proof}
Let $y_v$ be a generator of tame inertia for $v$ in $G_L$. 
 Then $\gamma_v$ has image $g y_v^\lambda g^{-1}$ in the tame quotient of $G_L$ for some $\lambda\in \hat{\Z}$ and $g\in G_L$.  Suppose we take another choice $\gamma'_v$ with image $h y_v^{\lambda'} h^{-1}$ 
 in the tame quotient of $G_L$ for some $h$ in $G_L$ in Definition~\ref{D:defI}.  In the tame, abelian, exponent $|\tilde{F}_c|$ quotient, $\gamma_v$ and $\gamma'_v$ have the same image (corresponding to $u$ in the class field theory map) and thus in this quotient  $y_v^\lambda\equiv y_v^{\lambda'}$.  That implies $\lambda\equiv \lambda'$ mod $(\Nm v -1,|\tilde{F}_c|)$.  If $p$ is the prime of which $\Nm v$ is a power, and $d$ is the highest power of $p$ dividing $|\mu_L|$, then $|\mu_L|/d \mid (\Nm v -1).$
Since $\tilde{\phi}$ is tame, it sends any inertia group for $v$ to a subgroup whose order is relatively prime to $p$, and in particular, a subgroup whose elements have order dividing $|\mu_L|/d$.  Thus $\tilde{\phi}(y_v^\lambda)=\tilde{\phi}(y_v^{\lambda'})$.  Since $I_{F,\pi,K}(\phi,u)$ only depends on the conjugacy class of $\tilde{\phi}(\gamma_v)$, the lemma follows.
\end{proof}

We show that the invariant defined does not depend on the choice of lift $\tilde{\phi}$.

\begin{lemma}\label{L:anyphi}
In Definition~\ref{D:defI}, the invariant  $I_{F,\pi,K}(\phi,u)$ does not depend on the choice of $\tilde{\phi}$.
\end{lemma}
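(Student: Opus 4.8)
The plan is to compare the invariant computed from two tame lifts $\tilde{\phi},\tilde{\phi}'\colon G_L\ra \tilde{F}_c$ of $\phi$ and show the ratio is trivial by global reciprocity. Set $A:=H_2(F,c)=\ker(\tilde{F}_c\ra F)$, which is central in $\tilde{F}_c$. First I would record that since $\tilde{\phi}$ and $\tilde{\phi}'$ have the same composite to $F$ and $A$ is central, the rule $g\mapsto \tilde{\phi}'(g)\tilde{\phi}(g)^{-1}$ defines a continuous homomorphism $\chi\colon G_L\ra A$ (the centrality is exactly what makes this multiplicative), so that $\tilde{\phi}'(g)=\chi(g)\tilde{\phi}(g)$; moreover $\chi$ is tame because both $\tilde{\phi}$ and $\tilde{\phi}'$ are tame by the hypothesis of Definition~\ref{D:defI}.

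Next I would check that the function $z$ is equivariant for multiplication by $A$: for $a\in A$ and $f\in\tilde{F}_c$ mapping to $c\cup\{1\}$, the conjugacy class of $af$ is $a$ times that of $f$ since $a$ is central, and $az(f)$ still maps into $\{1,c_0\}$, so by the uniqueness in Lemma~\ref{L:conjabove} we get $z(af)=az(f)$. Applying this with $f=\tilde{\phi}(\gamma_v)$ and $a=\chi(\gamma_v)$ gives $z(\tilde{\phi}'(\gamma_v))=\chi(\gamma_v)\,z(\tilde{\phi}(\gamma_v))$. Because each $\chi(\gamma_v)\in A$ is central and the factors $z(\tilde{\phi}(\gamma_v))$ pairwise commute (the preceding lemma on the order of the product), the invariant computed from $\tilde{\phi}'$ equals $\bigl(\prod_v \chi(\gamma_v)\bigr)\cdot\bigl(\prod_v z(\tilde{\phi}(\gamma_v))\bigr)$, where the product is over finite places of $L$; this product has only finitely many nontrivial terms since $\chi$ is ramified at finitely many places and $\gamma_v$ lies in an inertia group while $u$ is a unit, so $\chi(\gamma_v)=1$ at places unramified in $\chi$. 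It remains to show $\prod_v\chi(\gamma_v)=1$.

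For this, I would invoke that $\chi$ is tame, abelian, and of exponent dividing $|\tilde{F}_c|$, hence factors through the maximal tame abelian exponent-$|\tilde{F}_c|$ quotient of $G_L$ that appears in Definition~\ref{D:defI}; therefore $\chi(\gamma_v)$ depends only on the image of $\gamma_v$ in that quotient, which by construction equals the image of the local class field theory element of $u\in L_v^\times$. Thus $\prod_{v\text{ finite}}\chi(\gamma_v)=\chi\bigl(\prod_{v\text{ finite}}\mathrm{rec}_v(u)\bigr)$, thinking of $\chi$ on $G_L^{\mathrm{ab}}$. Now $4\mid|\mu_L|$ forces $L$ to be totally imaginary in the number field case (and in the function field case there are no archimedean places at all), so every archimedean local reciprocity map is trivial on $u$; hence by Artin reciprocity $\prod_{v\text{ finite}}\mathrm{rec}_v(u)=\prod_{\text{all }v}\mathrm{rec}_v(u)=1$ in $G_L^{\mathrm{ab}}$, giving $\prod_v\chi(\gamma_v)=\chi(1)=1$. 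One should also note (by Lemma~\ref{L:anyg}) that the $\gamma_v$ may be taken to be lifts of $\mathrm{rec}_v(u)$ so this matching is legitimate. The only genuinely substantive point — the rest being bookkeeping about central extensions and tameness — is this last step: recognizing that the product over finite places of the local contributions is forced to be trivial by the global reciprocity law, and that the archimedean contribution vanishes precisely because $L$ contains $\mu_4$.
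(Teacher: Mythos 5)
Your proof is correct and follows the same route as the paper's: write the two tame lifts as differing by a homomorphism $\chi\colon G_L\to A=H_2(F,c)$, note that $z$ is $A$-equivariant so the invariant changes by $\prod_v \chi(\gamma_v)$, and then kill that product by global reciprocity using that the $\gamma_v$ were chosen to correspond to the fixed root of unity $u$ under local class field theory. Your write-up is a bit more explicit than the paper's terse proof — in particular you spell out why $\chi$ is multiplicative and tame, why $z(af)=az(f)$ via Lemma~\ref{L:conjabove}, and why the archimedean contributions vanish (since $\mu_4\subset L$ forces $L$ totally imaginary in the number field case) — all of which the paper absorbs silently into the phrase ``by class field theory.''
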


\begin{proof}

Let $A=H_2(F,c)=\ker(\tilde{F}_c\ra F)$.
A different choice of $\tilde{\phi}$ would differ by $\chi: G_{L} \ra A$.
From Lemma~\ref{L:anyg}, we may choose $\gamma_v$ for each non-archimedian place $v$ corresponding to $u$ in the local class field theory map.
Then for each finite place $v$ of $L$, we have that $\tilde{\phi}(\gamma_v)\chi(\gamma_v)$ is in a conjugacy class over $c\cup \{1\}$ and the element above $c_0$ or $1$ is
$z(\tilde{\phi}(\gamma_v))\chi(\gamma_v)$. 
 Since the $\gamma_v$ all come from $u$ in the local class field theory maps, by class field theory, we have $\prod_v \chi(\gamma_v)=1$.
\end{proof}

Now, we show that the invariant actually lands in $H_2(F,c)$.

\begin{lemma}\label{L:inH}
In Definition~\ref{D:defI}, $I_{F,\pi,K}(\phi,u)$ is in $\ker(\tilde{F}_c\ra F)$.
\end{lemma}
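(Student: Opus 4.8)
The claim is that $I_{F,\pi,K}(\phi,u) := \prod_v z(\tilde\phi(\gamma_v))$ lies in $A := \ker(\tilde F_c \to F) = H_2(F,c)$. The natural strategy is to push the product down to $F$ via the covering map $\tilde F_c \to F$ and show the image is trivial there; since the kernel is exactly $A$, this proves membership. So first I would record that the image of $z(\tilde\phi(\gamma_v))$ in $F$ equals the image of $\tilde\phi(\gamma_v)$, which equals $\phi(\gamma_v)$ (because $\tilde\phi$ lifts $\phi$). Therefore the image of $I_{F,\pi,K}(\phi,u)$ in $F$ is $\prod_v \phi(\gamma_v)$, where the product is over all finite places $v$ of $L$.

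The heart of the argument is then to show $\prod_v \phi(\gamma_v) = 1$ in $F$. Each $\gamma_v$ is an element of an inertia subgroup at $v$ whose image in the maximal tame abelian exponent-$|\tilde F_c|$ quotient of $G_L$ matches $u$ under the local class field theory isomorphism at $v$. The key point is that $\prod_v \gamma_v$ is trivial in the global abelianized (tame, exponent-$|\tilde F_c|$) quotient of $G_L$: this is precisely the statement of global class field theory applied to the principal idele $u \in L^\times$ — the product of the local Artin symbols of $u$ over all places is trivial, and since $u \in \mu_L$ is a unit it has trivial contribution at the archimedean and wildly-ramified places, so the product over finite places of the local images is trivial. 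Since $\phi$ factors through the tame quotient and $F$ need not be abelian, I must be slightly careful: $\phi(\gamma_v)$ only depends on $\gamma_v$ through a conjugacy class, and the $\gamma_v$ for ramified $v$ lie in $c$, which commutes with everything relevant; more to the point, the product $\prod_v \phi(\gamma_v)$ taken in $F$ equals the image under $\phi$ of an element of $G_L$ that is trivial in $G_L^{\mathrm{ab}} \otimes \Z/|\tilde F_c|$. Since $\phi(G_L) \subseteq F$ has exponent dividing $|\tilde F_c|$ — indeed every element of $\tilde F_c$, hence of $F$, has order dividing $|\mu_L|$ which divides $|\tilde F_c|$ (this is the hypothesis ensuring the lift exists) — the abelianization $\phi$ induces on the relevant finite quotient kills this element, giving $\prod_v \phi(\gamma_v) = 1$.

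Carrying this out: I would phrase it as "$\prod_v \phi(\gamma_v)$ is the image under the composite $G_L \to G_L^{\mathrm{ab}} \to (G_L^{\mathrm{ab}})_{\mathrm{tame}}^{(|\tilde F_c|)}$ followed by $\phi$'s induced map, of the idele-class contribution of $u$, which is trivial by the Artin reciprocity law" — since the $\gamma_v$ were chosen so that their images in this quotient are exactly the local Artin images of $u$. Then conclude the image of $I_{F,\pi,K}(\phi,u)$ in $F$ is $1$, hence $I_{F,\pi,K}(\phi,u) \in A = H_2(F,c)$.

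**Main obstacle.** The subtle point is the passage from the local-to-global compatibility of class field theory (a statement about the abelianized Galois group) to a statement about $\prod_v \phi(\gamma_v)$ in the possibly-nonabelian $F$. One has to argue that, because every element in the image of $\phi$ has order dividing $|\tilde F_c|$ and because $\phi$ is defined on the tame quotient, the product $\prod_v \phi(\gamma_v)$ really is computed by first passing to the maximal tame abelian exponent-$|\tilde F_c|$ quotient where the class field theory relation $\prod_v (\text{image of } \gamma_v) = 1$ holds. I expect this is handled exactly as in the proof of Lemma~\ref{L:anyphi} (where the analogous cancellation $\prod_v \chi(\gamma_v) = 1$ for $\chi: G_L \to A$ was invoked): one reduces to a homomorphism out of $G_L$ to a finite abelian group and applies reciprocity. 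Beyond that, everything — existence of $z$ (Lemma~\ref{L:conjabove}), independence of choices (Lemmas~\ref{L:anyg}, \ref{L:anyphi}) — is already in place, so the proof should be short once the reciprocity bookkeeping is set up correctly.
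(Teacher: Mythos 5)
Your opening step is incorrect: the image of $z(\tilde\phi(\gamma_v))$ in $F$ is \emph{not} $\phi(\gamma_v)$. By construction, $z$ picks out the unique element of the conjugacy class of $\tilde\phi(\gamma_v)$ whose image in $F$ is either $1$ or the fixed representative $c_0$; it conjugates within the class. So the image of $I_{F,\pi,K}(\phi,u)$ in $F$ is $c_0^m$ where $m$ is the number of $v$ for which $z_v := z(\tilde\phi(\gamma_v))$ lies above $c_0$ rather than above $1$. Since $c_0$ has order $2$, what must be shown is that $m$ is even — a parity statement, not the identity $\prod_v \phi(\gamma_v) = 1$. And that identity isn't even well-posed here: the $\phi(\gamma_v)$ for ramified $v$ are various (generally non-commuting) elements of $c$, and there is no canonical order for the product in the nonabelian group $F$.

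You do sense the right fix in your "obstacle" paragraph — pass to an abelian quotient and use Artin reciprocity — and that is indeed what the paper does, via the composite $\pi\circ\phi\colon G_L \to \Z/2\Z$ and the induced map on the id\`ele class group $J_L$. But there is a genuine further step you are missing: $z_v$ lies above $c_0$ if and only if $\phi(\gamma_v) \in c$, and this is \emph{not} the same as "$\phi$ is ramified at $v$." The element $\gamma_v$ is the inertia element corresponding to $u$ under local CFT, so if $u$ is a square mod $v$ one may take $\gamma_v$ to be a square, forcing $\phi(\gamma_v)$ to be a square in $F$ and hence trivial (elements of $c$ have order $2$). Also, at even residue characteristic, tameness of $\phi$ forces $\phi$ unramified. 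So $z_v$ is a lift of $c_0$ precisely when $v$ is odd, $\phi$ is ramified at $v$, and $u$ is not a square mod $v$ — which is exactly the set of places where $u \mapsto 1$ under the local map $L_v^* \to \Z/2\Z$ induced by $\pi\circ\phi$. Only after this case analysis does the global reciprocity statement "$u$ is a principal id\`ele, so its total image in $\Z/2\Z$ is $0$" deliver the needed even parity. Without that bookkeeping, the reduction to an abelian quotient doesn't close the argument.
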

\begin{proof}
Each $z( \tilde{\phi}(\gamma_v))$ is either a lift of $c_0$ or in $H_2(F,c)$.
It suffices to show that the number of $v$ such that $z_v:=z( \tilde{\phi}(\gamma_v))$ is a lift of $c_0$ is even.

If $v$ is even, then since $\phi$ has all ramification order $2$ and is tame, then $v$ is unramified in $\phi$, and so $z_v$ cannot be a lift of $c_0$.  For the rest of the proof we consider odd places $v$.

First we consider $v$ such that $u$ is a square mod $v$.  Then by Hensel's lemma $L_v$ contains $w$ such that $w^2=u$, and let $\delta$ be an element of inertia at $v$ that, after abelianization, goes to $w$ in the class field theory map.  Then, using Lemma~\ref{L:anyg}, we can let $\gamma_v=\delta^2$, and we see that $z_v$ is a square and therefore is a lift of an element of the index $2$ subgroup $\ker \pi$ of $F$, and thus cannot be a lift of an element of $c_0$.

Finally, we consider $v$ such that $u$ is not a square mod $v$.  If $\phi$ is not ramified at $v$ then $z_v$ is not a lift of $c_0$.  What remains are the $v$ such that $\phi$ is ramified at $v$ and $u$ is not a square mod $v$.
Consider the composition $G_L \ra F \stackrel{\pi}{\ra} \Z/2\Z$, and the corresponding map $J_L \ra \Z/2\Z$ of the 
id\`ele class group of $L$.  Since $u$ goes to $0$ in this map, the number of places such that $u\mapsto 1$ in the corresponding local map $L_v \ra \Z/2\Z$ must be even.  We have that $u$ goes to $1$ in the map $L_v \ra \Z/2\Z$
exactly if $v$ is ramified in $\phi$, and $u$ is not a square mod $v$.  This proves the lemma.
\end{proof}

Next, we show that the invariant defined does not depend on $c_0$.
\begin{lemma}
In Definition~\ref{D:defI}, $I_{F,\pi,K}(\phi,u)$ does not depend on the choice of $c_0$.
\end{lemma}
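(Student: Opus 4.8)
The plan is to compare the invariant computed with two different choices $c_0$ and $c_0'$ of base element of the conjugacy class $c$. Since $c$ is a single conjugacy class of $F$, there is some $g_0 \in F$ with $c_0' = g_0 c_0 g_0^{-1}$; lift $g_0$ to $\hat{g}_0 \in \tilde{F}_c$. For a fixed lift $\tilde{\phi}$ and fixed choices of $\gamma_v$, I would compare the two functions $z$ and $z'$ attached to $c_0$ and $c_0'$: for $f \in \tilde{F}_c$ mapping to $c \cup \{1\}$, $z(f)$ is the unique element conjugate to $f$ lying over $1$ or $c_0$, and $z'(f)$ is the unique element conjugate to $f$ lying over $1$ or $c_0'$. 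By Lemma~\ref{L:conjabove}, each conjugacy class over $c \cup \{1\}$ is in bijection with its image in $F$, so over $1$ there is nothing to check ($z(f) = z'(f)$ since both equal the unique central lift), and over $c$ we have $z'(f) = \hat{g}_0 \, z(f) \, \hat{g}_0^{-1}$: indeed $\hat{g}_0 z(f) \hat{g}_0^{-1}$ is conjugate to $f$ and maps to $g_0 c_0 g_0^{-1} = c_0'$, hence equals $z'(f)$ by uniqueness.

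Next I would feed this into the product. Write $S$ for the (by Lemma~\ref{L:inH}, even-sized) set of finite places $v$ where $z(\tilde\phi(\gamma_v))$ lies over $c_0$; this is also exactly the set where $z'(\tilde\phi(\gamma_v))$ lies over $c_0'$, since lying over $c$ versus over $1$ depends only on $\tilde\phi(\gamma_v)$, not on the normalization. For $v \notin S$ the two factors agree (both are the central lift). For $v \in S$ we have $z'(\tilde\phi(\gamma_v)) = \hat{g}_0 \, z(\tilde\phi(\gamma_v)) \, \hat{g}_0^{-1}$. Since all the $z(\tilde\phi(\gamma_v))$ for $v \in S$ lie over the single element $c_0$, they pairwise commute (they differ by central elements), and similarly conjugation by $\hat{g}_0$ differs from the identity only by a central element on each such factor. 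Writing $z_v = z(\tilde\phi(\gamma_v))$ and $z_v' = \hat{g}_0 z_v \hat{g}_0^{-1} = a_v z_v$ with $a_v \in H_2(F,c)$ central, the two invariants differ by $\prod_{v \in S} a_v$, where $a_v = \hat{g}_0 z_v \hat{g}_0^{-1} z_v^{-1} = [\hat{g}_0, z_v]$ in the commutator notation.

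The key observation is that this correction factor is trivial. Since $|S|$ is even, I can pair up the places of $S$; but more cleanly, note that $a_v = [\hat{g}_0, z_v]$ depends only on the images: because any two lifts of $c_0$ differ by a central element and $[\hat{g}_0, -]$ kills central elements, $a_v = [\hat{g}_0, \hat{c}_0]$ is the \emph{same} central element $a$ for every $v \in S$, independent of $v$. Hence the two invariants differ by $a^{|S|}$, and since $|S|$ is even and... wait, I should instead observe that $a = [\hat{g}_0, \hat{c}_0]$ with $g_0$ commuting with $c_0$ in $F$ is not automatic — rather, I would argue directly: the product $\prod_{v \in S} z_v'$ equals $\hat{g}_0 \big(\prod_{v \in S} z_v\big) \hat{g}_0^{-1}$ because $\hat{g}_0$ can be pulled through each factor at the cost of central elements which, being central, can be collected, and the total collected central correction is $a^{|S|}$ where the ambiguity is absorbed — and since $I_{F,\pi,K}(\phi,u) = \prod_{v} z_v \in H_2(F,c)$ is itself central by Lemma~\ref{L:inH}, conjugation by $\hat{g}_0$ fixes it. Therefore $I_{F,\pi,K}(\phi,u)$ computed with $c_0'$ equals $\hat{g}_0 \, I_{F,\pi,K}(\phi,u) \, \hat{g}_0^{-1} = I_{F,\pi,K}(\phi,u)$, as desired.

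The main obstacle is bookkeeping the central corrections carefully: the cleanest route is the last one — show the full product over $v \in S$ of the $z_v'$ equals $\hat g_0$-conjugate of the full product of the $z_v$ (valid because the product lands in the center, so all the intermediate noncommutativity and the $|S|$-fold central ambiguities cancel against each other once one knows the end result is central), and then invoke centrality of the invariant from Lemma~\ref{L:inH} to conclude the conjugation acts trivially. I should be slightly careful that the factors over $1$ and the factors over $c_0$ interleave in the product, but since the over-$1$ factors are genuinely central this causes no trouble.
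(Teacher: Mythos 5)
Your final argument is correct and is essentially the paper's proof: the effect of replacing $c_0$ by $c_0'$ is to replace each $z(\tilde\phi(\gamma_v))$ by its conjugate $\hat g_0\, z(\tilde\phi(\gamma_v))\, \hat g_0^{-1}$, the product telescopes to $\hat g_0\, I_{F,\pi,K}(\phi,u)\, \hat g_0^{-1}$, and Lemma~\ref{L:inH} (centrality of $I_{F,\pi,K}(\phi,u)$ in $\tilde F_c$) finishes it. One caution about the detour you (rightly) abandoned mid-stream: the claim that $a_v = \hat g_0 z_v \hat g_0^{-1} z_v^{-1}$ lies in $H_2(F,c)$ is false in general, since $a_v$ maps to $c_0' c_0^{-1} \neq 1$ in $F$ when $c_0' \neq c_0$; so ``the two invariants differ by $\prod_{v\in S} a_v$'' is not a product of central elements and that route does not go through as stated. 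The telescoping-plus-centrality argument you land on avoids this entirely and is the right one.
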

\begin{proof}
 If we chose $c_1$ instead of $c_0$, we would replace each $z( \tilde{\phi}(\gamma_v))$ with $gz( \tilde{\phi}(\gamma_v)) g^{-1}$ for some $g$ such that $gc_0g^{-1}=c_1$.  Thus, $I_{F,\pi,K}(\phi,u)$ would be replaced by $gI_{F,\pi,K}(\phi,u)g^{-1},$ which equals $I_{F,\pi,K}(\phi,u)$ since $I_{F,\pi,K}(\phi,u)\in H_2(F,c)$ by Lemma~\ref{L:inH}
 and $H_2(F,c)$ is central in $\tilde{F}_c$.
\end{proof}

\begin{lemma}\label{L:phiconj}
With the notation of Definition~\ref{D:defI}, if  $f\in F$, then  $I_{F,\pi,K}(\phi,u)=I_{F,\pi,K}(f\phi f^{-1},u)$ 
\end{lemma}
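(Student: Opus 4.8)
The plan is to show that conjugating $\phi$ by $f \in F$ changes nothing in the construction of Definition~\ref{D:defI} except by an overall conjugation, which is invisible because the invariant lands in the center. First I would lift $f$ to an element $\hat{f} \in \tilde{F}_c$; this is possible since $\tilde{F}_c \ra F$ is surjective. Then, given a tame lift $\tilde{\phi}: G_L \ra \tilde{F}_c$ of $\phi$, the map $\hat{f}\tilde{\phi}\hat{f}^{-1}$ is a tame lift of $f\phi f^{-1}$, and by Lemma~\ref{L:anyphi} we may use this particular lift to compute $I_{F,\pi,K}(f\phi f^{-1},u)$ without loss of generality.

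Next I would observe that the data $u \in \mu_L$ and the choices of inertia elements $\gamma_v$ are unaffected by replacing $\phi$ with $f\phi f^{-1}$: the set of inertia subgroups at each place $v$ is the same (conjugation by $f$ does not change which places are ramified, since $c$ is conjugation-stable and $f\phi f^{-1}$ has inertia in $c \cup \{1\}$ exactly when $\phi$ does), and the condition defining $\gamma_v$ only refers to $u$ and the abelianized class field theory map, not to $\phi$. So we may use the same $\gamma_v$ for both computations, invoking Lemma~\ref{L:anyg}. With these choices, $(\hat{f}\tilde{\phi}\hat{f}^{-1})(\gamma_v) = \hat{f}\,\tilde{\phi}(\gamma_v)\,\hat{f}^{-1}$, so the $v$-th factor in the product for $f\phi f^{-1}$ is $z(\hat{f}\,\tilde{\phi}(\gamma_v)\,\hat{f}^{-1})$. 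Since $z$ depends only on the conjugacy class of its argument in $\tilde{F}_c$, and $\hat{f}\,\tilde{\phi}(\gamma_v)\,\hat{f}^{-1}$ is conjugate to $\tilde{\phi}(\gamma_v)$, we get $z(\hat{f}\,\tilde{\phi}(\gamma_v)\,\hat{f}^{-1}) = z(\tilde{\phi}(\gamma_v))$ for every $v$.

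Therefore the two products are literally equal term by term, giving $I_{F,\pi,K}(f\phi f^{-1},u) = \prod_v z(\tilde{\phi}(\gamma_v)) = I_{F,\pi,K}(\phi,u)$. There is no genuine obstacle here; the only point requiring a moment of care is checking that $f\phi f^{-1}$ still satisfies the hypotheses of Definition~\ref{D:defI} (tameness, and inertia landing in $c \cup \{1\}$), which follows since conjugation is an inner automorphism preserving ramification behaviour and $c$ is a conjugacy class. One could alternatively phrase the argument more crudely: by the independence lemmas already established (Lemmas~\ref{L:anyg}, \ref{L:anyphi}, and the $c_0$-independence lemma), the invariant depends on $\phi$ only through data that is manifestly conjugation-invariant up to an inner automorphism of $\tilde{F}_c$, and since $I_{F,\pi,K}(\phi,u) \in H_2(F,c)$ is central by Lemma~\ref{L:inH}, that inner automorphism acts trivially. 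I expect the write-up to be only a few lines.
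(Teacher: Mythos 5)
Your proposal is correct and follows essentially the same approach as the paper's proof: lift $f$ to $\hat{f}\in\tilde{F}_c$, use $\hat{f}\tilde{\phi}\hat{f}^{-1}$ as a tame lift of $f\phi f^{-1}$, and observe that $z$ depends only on the conjugacy class of its argument, so the products agree term by term. The paper states this in three lines; your additional remarks (that $\gamma_v$ does not depend on $\phi$, that $f\phi f^{-1}$ still satisfies the hypotheses of the definition) are correct but are left implicit in the paper.
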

\begin{proof}
Let $\psi=f\phi f^{-1}$.  Let $\hat{f}$ be a lift of $f$ to $\tilde{F}_c$.  Then $\tilde{\psi}=\hat{f}\tilde{\phi} \hat{f}^{-1}$
is a tamely ramified lift of $\psi$ to $\tilde{F}_c$.  Since $\hat{f}\tilde{\phi} \hat{f}^{-1}(\gamma_v)$ is in the same conjugacy class as $\tilde{\phi} (\gamma_v)$, the lemma follows.
\end{proof}

Finally, we show that the invariant is $|\mu_K|$-torsion.
\begin{lemma}
In Definition~\ref{D:defI}, $I_{F,\pi,K}(\phi,u)^{|\mu_K|}=1$.
\end{lemma}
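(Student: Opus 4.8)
The plan is to exploit the fact that the invariant is built out of local contributions indexed by a Galois extension $L/K$ that contains $\mu_{4|\tilde F_c|}$, and that Galois acts on everything in sight. More precisely, $L = K(\mu_{4|\tilde F_c|})$ is Galois over $K$ with group $\Delta = \Gal(L/K)$, and I expect the key point to be a kind of norm-compatibility: if one averages (multiplies) the local contribution $z(\tilde\phi(\gamma_v))$ over the $\Delta$-orbit of a place $v$, one gets something that can be re-expressed as $I_{F,\pi,K}(\phi, \Nm_{L/K}(u'))$-type data, or at least so that raising $I_{F,\pi,K}(\phi,u)$ to the power $|\Delta|$ — or rather $|\mu_K|$, using $\mu_K \subseteq \mu_L$ and the index — reduces the invariant to one attached to a root of unity lying in $\mu_K$, on which Galois acts trivially, so that summing over the orbit forces the product into a quotient where it vanishes. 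Concretely: I would pick a generator $\zeta$ of $\mu_L$, note $u = \zeta^j$ for some $j$, and observe that $I_{F,\pi,K}(\phi,-)$ is a homomorphism $\mu_L \to H_2(F,c)$ (already proved in the sequence of lemmas, or provable by the same local argument as Lemma~\ref{L:anyphi}); so it suffices to show $I_{F,\pi,K}(\phi,\zeta)^{|\mu_K|}=1$, i.e. $I_{F,\pi,K}(\phi,\zeta^{|\mu_K|})=1$. But $\zeta^{|\mu_K|}$ is an element whose image generates $\mu_L/\mu_K$-torsion-relevant data; the target being $|\mu_K|$-torsion is really a statement that the homomorphism $\mu_L\to H_2(F,c)$ factors through $\mu_L/\mu_L^{|\mu_K|}$, equivalently kills $\mu_L^{|\mu_K|}$, so I want to show the contribution of any $|\mu_K|$-th power root of unity is trivial.

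The mechanism I would use for that is descent along $L/K$: for $w = v^{|\mu_K|}$ with $v\in\mu_L$, write $\gamma$ attached to $w$ as (up to conjugacy and tameness, using Lemma~\ref{L:anyg}) a product $\prod_{\delta\in\Delta/\Delta_v}\delta(\gamma'_{v})$ of Galois-conjugate inertia elements attached to $v$, since $w = \Nm_{L/K}$ of something only when $v\in\mu_K$, but more robustly: the class field theory element attached to $v^{|\mu_K|}$ at a place of $L$ is the image under the transfer/corestriction of the element attached to $v$ at the corresponding place of a smaller field, and corestriction followed by the defining quotient lands in the part of $H_2(F,c)$ killed by $|\mu_K|$ by a standard restriction-corestriction argument. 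I would then invoke Lemma~\ref{L:phiconj}, which says the invariant is insensitive to conjugating $\phi$, to move the Galois action from the places onto $\phi$ and back, so that the product over a $\Delta$-orbit of places telescopes. The cleanest route may instead be entirely local: for each odd place $v$ of $L$ at which $u$ contributes, the local term $z(\tilde\phi(\gamma_v))$ depends only on $\tilde\phi(\gamma_v)$ modulo conjugacy, $\gamma_v$ lies in tame inertia whose image under $\tilde\phi$ has order dividing $\Nm v - 1$, and one has $|\mu_L| \mid \Nm v - 1$ up to the $p_v$-part; replacing $u$ by $u^{|\mu_K|}$ replaces $\gamma_v$ by $\gamma_v^{|\mu_K|}$, and I would argue $\tilde\phi(\gamma_v)^{|\mu_K|}$ is central (a lift of $1$), hence each local term becomes an element of $H_2(F,c)$ that is a $|\mu_K|$-th power — and then globally the product of these is trivial by the same reciprocity used in Lemma~\ref{L:inH}.

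The main obstacle I anticipate is making the last global vanishing precise: it is not enough that each local term is a $|\mu_K|$-th power in $H_2(F,c)$; I need the \emph{product} of the local terms to be trivial, which requires a global reciprocity input rather than a purely local one. The right statement is presumably that $u\mapsto \prod_v z(\tilde\phi(\gamma_v))$, viewed as a map $\mu_L \to H_2(F,c)$, factors through $H^1$ or $H^2$ of $G_K$ (not $G_L$) with coefficients in $H_2(F,c)(1)$ or its dual, and that Galois-descent (the fact that $\phi$ is defined over $K$, so $\Delta$ permutes the local data compatibly) forces the image to be $\Delta$-invariant, hence — since the Tate twist by $\mu_L$ becomes trivial only after killing $|\mu_K|$ — to be $|\mu_K|$-torsion. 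So the real content is a restriction-corestriction / Galois-equivariance argument for the global product, and I would organize the proof around that: first reduce (via the homomorphism property and Lemma~\ref{L:phiconj}) to showing $I_{F,\pi,K}(\phi,\zeta^{|\mu_K|})=1$ for a generator $\zeta$ of $\mu_L$; then express the local $\gamma_v$ for $\zeta^{|\mu_K|}$ via corestriction from $K(\mu_{|\mu_K|\cdot(\text{stuff})})$ down to $K$; then use that corestriction lands in the $|\mu_K|$-divisible part while the previously-proved lemmas show the resulting element is simultaneously in $H_2(F,c)$ and, being a global product built from a principal idèle, trivial.
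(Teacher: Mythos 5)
Your third paragraph correctly diagnoses the issue with your second paragraph: the ``entirely local'' route is circular. Replacing $u$ by $u^{|\mu_K|}$ replaces each $\gamma_v$ by $\gamma_v^{|\mu_K|}$, so each local term becomes $z(\tilde\phi(\gamma_v))^{|\mu_K|}$ (using that the image of $\tilde\phi(\gamma_v)$ in $F$ has order $\le 2$ and $|\mu_K|$ is even); multiplying over $v$ just reproves $I_{F,\pi,K}(\phi,u^{|\mu_K|})=I_{F,\pi,K}(\phi,u)^{|\mu_K|}$, which is the homomorphism property, not the claim. The norm/corestriction sketch in your first paragraph is not made precise and does not clearly supply the missing global input. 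Your third paragraph does gesture at the correct mechanism --- Galois descent using that $\phi$ is defined over $G_K$, together with the fact that $\Gal(L/K)$ acts on $\mu_L$ through the cyclotomic character and that this action is trivial only modulo $|\mu_K|$ --- but you stop at ``I would organize the proof around that'' without producing the identity that makes it work.

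The missing step is the following double computation. Take $\sigma\in G_K$ and set $\psi(x)=\phi(\sigma x\sigma^{-1})$. On one hand, since $\phi$ is defined on all of $G_K$, $\psi=\phi(\sigma)\,\phi\,\phi(\sigma)^{-1}$, and by Lemma~\ref{L:phiconj} one gets $I_{F,\pi,K}(\psi,u)=I_{F,\pi,K}(\phi,u)$. On the other hand, $\tilde\psi(x)=\tilde\phi(\sigma x\sigma^{-1})$ is a lift of $\psi$, and tracking the functoriality of local class field theory under the permutation $v\mapsto\sigma(v)$ of places shows $\sigma\gamma_v\sigma^{-1}$ corresponds to $u^{\chi(\sigma)}$ at $\sigma(v)$, from which $I_{F,\pi,K}(\psi,u)=I_{F,\pi,K}(\phi,u)^{\chi(\sigma)}$. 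Equating the two gives $I_{F,\pi,K}(\phi,u)^{\chi(\sigma)-1}=1$ for every $\sigma\in G_K$, and the gcd of $\{\chi(\sigma)-1:\sigma\in G_K\}$ is exactly $|\mu_K|$ (if $\chi(\sigma)\equiv 1\pmod d$ for all $\sigma$, then $G_K$ fixes $\mu_d$, so $\mu_d\subseteq K$). This is the precise form of the ``Tate twist by $\mu_L$ becomes trivial only after killing $|\mu_K|$'' idea you were reaching for, and without it the proof does not close.
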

\begin{proof}
Let $\sigma\in G_K$.  Let $\psi(x):=\phi(\sigma x \sigma^{-1})$.  Since $\phi$ is defined over $G_K$, we have that 
$\psi(x)=\phi(\sigma) \phi(x) \phi(\sigma)^{-1}$ and thus by Lemma~\ref{L:phiconj} $I_{F,\pi,K}(\psi,u)=I_{F,\pi,K}(\phi,u)$.
However, note that $\tilde{\psi}(x):=\tilde{\phi}(\sigma x \sigma^{-1})$ gives a lift of $\psi$.  
We choose $\gamma_v\in G_L$ whose image in the abelianization correspond to $u$ in the local class field theory map.
We have $\tilde{\psi}(\gamma_v)=\tilde{\phi}(\sigma \gamma_v \sigma^{-1})$.  

We have the commutative diagram
$$
\begin{CD}
L_v^* @>{\isom}>> G_{L_v}^{ab}\\
@VV{\sigma}V @VV{\sigma}V\\
L_{\sigma(v)}^* @>{\isom}>> G_{L_{\sigma(v)}}^{ab}
\end{CD}
$$
by the functoriality of the local class field theory map.  Here the $\sigma$ on the left acts as $\sigma$ on elements of $L$ (and so takes Cauchy sequences for the $v$-adic metric to Cauchy sequences for the $\sigma(v)$-adic metric).
The top and bottom maps are the local class field theory maps.  The right hand map is the map that takes $f$ to $\sigma \circ f \circ \sigma^{-1}$.
Note that $\sigma \gamma_v \sigma^{-1}$ is an element of tame inertia for $\sigma(v)$ and its image in the abelianization  comes from  $u^{\chi(\sigma)}$ in the local class field theory map by the commutativity of the above diagram, where $\chi$ is the cyclotomic character.  

So, we could choose $\sigma \gamma_v^{({\chi(\sigma)}^{-1})} \sigma^{-1}$ for $\gamma'_{\sigma(v)},$ since they have image in the abelianization coming from $u$ in the class field theory map.
So in the conjugacy class of $\tilde{\phi}(\gamma_{\sigma(v)})$ we also have 
$\tilde{\phi}(\sigma \gamma_v^{{\chi(\sigma)}^{-1}} \sigma^{-1})$ by Lemma~\ref{L:anyg}.

Thus $z(\tilde{\psi}(\gamma_v)^{{\chi(\sigma)}^{-1}})=z(\tilde{\phi}(\gamma_{\sigma(v)})).$  Multiply over all $v$, we obtain $I_{F,\pi,K}(\psi,u)^{{\chi(\sigma)}^{-1}}=I_{F,\pi,K}(\phi,u).$
So $I_{F,\pi,K}(\psi,u)=I_{F,\pi,K}(\phi,u)^{{\chi(\sigma)}}.$
Thus we conclude $I_{F,\pi,K}(\phi,u)^{{\chi(\sigma)-1}}=1$ for all $\sigma\in G_K$.  For the $\sigma\in G_K$, the great common divisor of
$\chi(\sigma)-1$ is $|\mu_K|$. (Proof: Clearly $\chi(\sigma)$ is $1$ mod $|\mu_K|$ for all $\sigma$.  Moreover if $\chi(\sigma)$ is $1$ mod $d$ for all $\sigma\in G_K$, then $G_K$ fixes the $d$th roots of unity, and thus $\mu_d\sub K$.  ) The lemma follows.
\end{proof}

\begin{lemma}
In Definition~\ref{D:defI}, $I_{F,\pi,K}(\phi,u^\lambda)=I_{F,\pi,K}(\phi,u)^\lambda$.
\end{lemma}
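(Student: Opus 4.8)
The plan is to choose the inertia elements $\gamma_v$ compatibly when we replace $u$ by $u^\lambda$, and then reduce to a purely group-theoretic identity about the distinguishing function $z$. First I would invoke Lemma~\ref{L:anyg} to fix, for each finite place $v$ of $L$, a choice of $\gamma_v$ corresponding to $u$ in the local class field theory map; since that map and the projection onto the maximal tame, abelian, exponent $|\tilde F_c|$ quotient of $G_L$ are homomorphisms, $\gamma_v^\lambda$ lies in an inertia subgroup at $v$ and has the image of $u^\lambda$ in that quotient. Hence, again by Lemma~\ref{L:anyg}, we may compute $I_{F,\pi,K}(\phi,u^\lambda)=\prod_v z(\tilde\phi(\gamma_v)^\lambda)$ using the elements $\gamma_v^\lambda$.

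The heart of the argument is then the termwise claim that $z(\tilde\phi(\gamma_v)^\lambda)=z(\tilde\phi(\gamma_v))^\lambda$ for every $v$. I would argue by cases. If $\tilde\phi(\gamma_v)$ lies above $1\in F$ then it is central (it lies in $H_2(F,c)$), so $z$ fixes it and its powers and the identity is immediate. If $\tilde\phi(\gamma_v)$ lies above $c$, write $g:=z(\tilde\phi(\gamma_v))$, the unique element of its conjugacy class above $c_0$ (uniqueness by Lemma~\ref{L:conjabove}), so $\tilde\phi(\gamma_v)=hgh^{-1}$ for some $h\in\tilde F_c$ and $\tilde\phi(\gamma_v)^\lambda=hg^\lambda h^{-1}$. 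Since $c_0$ has order $2$, $g^2\in H_2(F,c)$ is central. For $\lambda$ even, $g^\lambda$ is central and lies above $1$, so $z(\tilde\phi(\gamma_v)^\lambda)=g^\lambda$; for $\lambda$ odd, $g^\lambda$ lies above $c_0$ and is conjugate to $\tilde\phi(\gamma_v)^\lambda$, so again $g^\lambda$ is the distinguished conjugate and $z(\tilde\phi(\gamma_v)^\lambda)=g^\lambda$. In both cases $z(\tilde\phi(\gamma_v)^\lambda)=g^\lambda=z(\tilde\phi(\gamma_v))^\lambda$. (One should also note $\tilde\phi(\gamma_v)^\lambda$ still maps into $c\cup\{1\}$, so $z$ is defined on it.)

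Finally, because the elements $z(\tilde\phi(\gamma_v))$ pairwise commute (the lemma showing the product in Definition~\ref{D:defI} is order-independent), the product of the $\lambda$-th powers equals the $\lambda$-th power of the product, giving
$$
I_{F,\pi,K}(\phi,u^\lambda)=\prod_v z(\tilde\phi(\gamma_v))^\lambda=\Big(\prod_v z(\tilde\phi(\gamma_v))\Big)^\lambda=I_{F,\pi,K}(\phi,u)^\lambda.
$$
The only genuinely delicate point is the parity bookkeeping in the case analysis for $z$: one must use that $H_2(F,c)$ is central and that conjugacy classes above $c_0$ are singletons in the fibre (Lemma~\ref{L:conjabove}) to see that $g^\lambda$ is exactly the value of $z$ on $\tilde\phi(\gamma_v)^\lambda$ regardless of whether $\lambda$ is even or odd; everything else is formal.
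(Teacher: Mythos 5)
Your proof is correct and follows the same approach as the paper's (which consists of exactly two sentences: replace $\gamma_v$ by $\gamma_v^\lambda$, and "the lemma follows"). You have carefully spelled out the step the paper leaves implicit, namely the identity $z(\tilde\phi(\gamma_v)^\lambda)=z(\tilde\phi(\gamma_v))^\lambda$, and your parity argument and use of Lemma~\ref{L:conjabove} and the commutation of the factors correctly fill in those details.
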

\begin{proof}
 If we replace $u$ with $u^\lambda$ for some $\lambda\in \hat{\Z}$, we see that we can replace $\gamma_v$ with $\gamma_v^\lambda$.  The lemma follows.
\end{proof}

 \subsection{Agreement with the topological invariant in function field case}\label{SS:same}

In this section, we will see in Theorem~\ref{T:same} that Definition~\ref{D:defI} essentially agrees with the lifting invariant defined by Ellenberg, Venkatesh, and Westerland \cite[Section 8.4]{Ellenberg2012}, which counts components in the Hurwitz schemes used to prove Theorem~\ref{T:FF}. 

 Ellenberg, Venkatesh, and Westerland define the lifting invariant for continuous, tame homomorphisms $G_{\bar{\F}_q(t)} \ra F$ when the characteristic of $\F_q$ does not divide $|F|$.  The invariant can then also be defined for  
$G_{\F_q(t)} \ra F$, by restriction to $G_{\bar{\F}_q(t)}  \sub G_{\F_q(t)}$ which amounts to base change to the cover of $\P^1$ associated to the homomorphism from $\F_q$ to $\bar{\F}_q$. 
This lifting invariant tells you what component of the Hurwitz space an extension is on (see Theorem~\ref{T:Chur} \eqref{i:comp}).
 We now give the definition of this lifting invariant.

\begin{definition}[from Section 8.4 of \cite{Ellenberg2012}]\label{D:EVW}
Let $F$ be a finite group and $p$ a prime not dividing $|F|$.  
 Let $c$ be a union of conjugacy classes of $F$.
 Let $\phi: G_{\bar{\F}_p(t)} \ra F$ 
be a  continuous, tame homomorphism, with inertia subgroups with images only in $c\cup \{1\}$ and let $S$ 
be the union of $\{\infty\}$ and the set of $\bar{\F}_p$ points of the corresponding $\P^1$ where $\phi$
is ramified.  Let $\Gamma$ be the tame quotient of $G_{\bar{\F}_p(t)}$ corresponding to extensions only 
ramified at $S$.  Then $\Gamma$ is free pro-prime-to-$p$, on generators $\delta_1,\dots,\delta_k$, with each generator generating the tame inertia subgroup at a point of $S\setminus \infty$.
We can choose the $\delta_i$ so that
$$
\delta_1  \delta_2 \cdots \delta_k \delta_\infty=1
$$
in $\Gamma$, where  $\delta_\infty$ is a generator of tame inertia at $\infty$.
(This is possible by Grothendieck's comparison of \'etale and topological $\pi_1$ \cite[Corollaire 2.12, Expos\'e XIII]{Grothendieck2003}.)
 For each conjugacy class in $c$, we choose a conjugacy class in $\tilde{F}_c$ above it, and for $f\in c$, let $[f]\in \tilde{F}_c$ denote its preimage in the chosen conjugacy class (see Lemma~\ref{L:conjabove}). 
Then, the invariant of $\phi$ is
$$
J_{F,c,p}(\phi,\delta_1,\dots,\delta_k)=[\phi(\delta_1)][\phi(\delta_2)]\cdots [\phi(\delta_k)]\in \tilde{F}_c.
$$
\end{definition}

Since, at one time, Theorem~\ref{T:FF} only considers $\phi$ with $\phi(\delta_\infty)$ some fixed value, for our purposes the following theorem shows agreement of our invariant over $\F_q(t)$ with that of Ellenberg, Venkatesh, and Westerland.

\begin{theorem}[Agreement of invariants]\label{T:same}
Let $F$ be a finite group and $q$ a power of a prime $p$ not dividing $|F|$.  
Let $\pi: F \ra \Z/2\Z$ be a surjective homomorphism.   Let $c$ be the set of order two elements of $F\setminus \ker \pi$, and suppose that $c$ is a single conjugacy class of $F$. 
Let $\phi: G_{\F_q(t)} \ra F$  be a  continuous, tame homomorphism, with inertia subgroups with images only in $c\cup \{1\}$.  
  Given $\delta_1,\dots,\delta_k$ (and $\delta_\infty$ if $\infty$ is ramified in $\phi$) chosen in Definition~\ref{D:EVW} for $\phi|_{G_{\bar{\F}_p(t)}}$, there exists a root of unity $u$ in $L$ (from Definition~\ref{D:defI} for $K=\F_q(t)$) such that
  $$
  J_{F,c,p}(\phi|_{G_{\bar{\F}_p(t)}},\delta_1,\dots,\delta_k)=[c_0]^m I_{F,\pi,\F_q(t)}(\phi,u)
 [\phi(\delta_\infty)]^{-1},
  $$
where $m$ is the number of $\bar{\F}_p$ points ramified in $\phi$, $[\cdot]$ is as in Definition~\ref{D:EVW} and $[1]=1$, and $c_0$ is any element of $c$.
\end{theorem}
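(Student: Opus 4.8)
The plan is to show that the two invariants, which are both defined as products of local contributions indexed by the ramified places, agree term by term once we make the right dictionary between the local data on the geometric side (the generators $\delta_i$ of the tame fundamental group) and the local data on the arithmetic side (the inertia generators $\gamma_v$ attached to a chosen root of unity $u$). First I would fix notation: write $R \subset \P^1_{\bar\F_p}$ for the set of ramified $\bar\F_p$-points of $\phi|_{G_{\bar\F_p(t)}}$, so $|R|=m$, and recall that $L = \F_q(t)(\mu_{4|\tilde F_c|})$ is a constant field extension of $\F_q(t)$, hence corresponds geometrically to base change along $\bar\F_p/\F_q$ — in particular $L$ and $\bar\F_p(t)$ have the same ramified places (each $\bar\F_p$-point of $\P^1$ lying over a place of $\F_q(t)$). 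The key preliminary observation is that for a \emph{constant} extension like $L/\F_q(t)$, the relation $\delta_1\cdots\delta_k\delta_\infty = 1$ in the tame fundamental group of $\P^1_{\bar\F_p}\setminus S$ is exactly the statement that a product of local inertia generators is trivial in the global quotient; this is the geometric shadow of the idelic product formula that underlies Lemma~\ref{L:inH} and the well-definedness lemmas for $I_{F,\pi,\F_q(t)}$.

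Next I would choose $u$. The point of introducing $u$ on the arithmetic side is that $\gamma_v$ is only pinned down up to the image of $u$ under local class field theory; on the geometric side the $\delta_i$ are honest topological generators of tame inertia (image of a small loop). So I would take $u$ to be a generator of $\mu_L = \mu_{q^{\deg L} - 1}$ chosen so that, under the local reciprocity map at each finite place $v$ of $L$, the uniformizer-class of $\delta_i$ (transported to $L$) and the class of $u$ have the same image in the maximal tame abelian exponent-$|\tilde F_c|$ quotient. Concretely, at a place $v$ with residue field $\F_{q^f}$, tame inertia is cyclic of order prime to $p$ and the relevant quotient is cyclic of order $\gcd(q^f-1,\,|\tilde F_c|)$; since $\mu_L$ surjects onto every such cyclic group, a single $u$ (a global root of unity) can be chosen to match all the $\gamma_v$'s simultaneously with the $\delta_i$'s. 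With this $u$, Lemma~\ref{L:anyg} lets me take $\gamma_v = \delta_i$ for the place $v$ corresponding to the $i$-th ramified point, and $\gamma_\infty = \delta_\infty$. Then $\tilde\phi(\gamma_v) = \tilde\phi(\delta_i)$, and since $\tilde\phi$ is a chosen tame lift while $[\,\cdot\,]$ in Definition~\ref{D:EVW} picks a representative in a \emph{chosen} conjugacy class above each class of $c$, the elements $\tilde\phi(\delta_i)$ and $[\phi(\delta_i)]$ lie in the same conjugacy class of $\tilde F_c$ (both map to $\phi(\delta_i) \in c$). By Lemma~\ref{L:conjabove} the conjugacy class of $\tilde F_c$ above an element of $c$ maps bijectively to its image, and $z(\cdot)$ and $[\,\cdot\,]$ are two ways of selecting a distinguished element — $z$ normalizes to the class above $c_0$ (or $1$), while $[\,\cdot\,]$ normalizes to a fixed chosen class. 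I would record the discrepancy: $[\phi(\delta_i)] = g_i\, z(\tilde\phi(\delta_i))\, g_i^{-1}$ for appropriate $g_i$, but because all these elements lie over $c_0$ (away from the unramified places, where the contribution is central), and elements over a common $c_0$ differ only by central factors, the products rearrange freely — this is precisely the commuting statement used in the order-independence lemma.

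Now I would assemble the product. On the geometric side $J_{F,c,p}(\phi|_{G_{\bar\F_p(t)}},\delta_1,\dots,\delta_k) = [\phi(\delta_1)]\cdots[\phi(\delta_k)]$. On the arithmetic side $I_{F,\pi,\F_q(t)}(\phi,u) = \prod_v z(\tilde\phi(\gamma_v))$ runs over \emph{all} finite places of $L$, but at unramified places $z(\tilde\phi(\gamma_v))$ lands in the central $H_2(F,c)$ and — by the same square/quadratic-residue analysis as in Lemma~\ref{L:inH} combined with the idele-class triviality — contributes trivially after we account for $u$; alternatively, with the above choice of $u$ the only places with nontrivial contribution are those over $R \cup \{\infty\}$. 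So $I_{F,\pi,\F_q(t)}(\phi,u)$ equals $\big(\prod_{i=1}^k z(\tilde\phi(\delta_i))\big)\, z(\tilde\phi(\delta_\infty))$ up to the central correction needed to turn each $z$ into the corresponding $[\,\cdot\,]$. Comparing with $J$, and using that replacing $z(\tilde\phi(\delta_i))$ by $[\phi(\delta_i)]$ multiplies each factor over $c_0$ by a central element, I get $J = [c_0]^m I_{F,\pi,\F_q(t)}(\phi,u)[\phi(\delta_\infty)]^{-1}$: the $[c_0]^m$ bookkeeps the $m$ factors lying over $c_0$ whose $z$-normalization (to the class above $c_0$) differs from the $[\,\cdot\,]$-normalization by that many copies of the relevant central element, and the $[\phi(\delta_\infty)]^{-1}$ strips off the contribution at $\infty$ that $J$ omits but $I$ includes.

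I expect the main obstacle to be the careful matching of normalizations — precisely controlling the central discrepancy between $z(\tilde\phi(\delta_i))$ and $[\phi(\delta_i)]$ and verifying it contributes exactly a single factor of $[c_0]$ per ramified point (hence $[c_0]^m$), together with confirming that the unramified places genuinely cancel out (which is where the choice of $u$ and the idelic product formula, as in Lemma~\ref{L:inH}, do the work). The rest — identifying $L$ with a constant base change, translating $\delta_i \leftrightarrow \gamma_v$ via Lemma~\ref{L:anyg}, and using Lemma~\ref{L:conjabove} to see the two distinguished-element selections live in the same conjugacy class — is essentially formal.
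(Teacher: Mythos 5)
Your proposal captures the right high-level picture — both invariants are products of local contributions normalized by a choice of distinguished representatives in conjugacy classes over $c_0$, and the comparison should reduce to matching the $\delta_i$ with the $\gamma_v$ and tracking central discrepancies — but there are three places where it glosses over exactly the steps that the paper's proof is forced to handle carefully, and at least two of those are genuine gaps rather than omitted routine details.

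First, your dictionary ``take $\gamma_v=\delta_i$ for the place $v$ corresponding to the $i$-th ramified point'' is not well-defined. The $\delta_i$ are inertia generators at $\bar\F_p$-points of $\P^1$, while the $\gamma_v$ in Definition~\ref{D:defI} are indexed by places of $L=\F_q(t)(\mu_{4|\tilde F_c|})$, which is a \emph{finite} constant extension of $\F_q(t)$, not $\bar\F_p(t)$. A single place $v$ of $L$ can have several $\bar\F_p$-points above it (one for each element of the Frobenius orbit), so there is no bijection $\{v\}\leftrightarrow\{\delta_i\}$. The paper resolves this by passing to an auxiliary constant extension $M=\F_{q^r}(t)$, large enough that the ramified points are rational over $M$, doing the comparison over $M$, and then using a norm computation (the block $\prod_{w\in T} z(\hat\phi(\gamma_w)) = z(\tilde\phi(\gamma_{w_0}^{(q^{a|T|}-1)/(q^a-1)}))$ together with the action of norm on roots of unity) to descend back to $L$. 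Your proposal does not address this and, since the statement of the theorem is phrased in terms of $I$ over $L$, the norm step cannot be skipped.

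Second, the factor $[c_0]^m$ does not arise as a per-factor discrepancy between the $z$-normalization and the $[\,\cdot\,]$-normalization, as you suggest. It comes from lifting $\phi|_{G_{\bar\F_p(t)}}$ to a $\hat\phi: G_{\bar\F_p(t)}\to\tilde F_c$ (which is possible because $\Gamma$ is free), inserting $\hat\phi(\delta_i)\hat\phi(\delta_i)^{-1}$ into the product $[\phi(\delta_1)]\cdots[\phi(\delta_\infty)]$, factoring the $\hat\phi(\delta_i)$'s to the left, and invoking the relation $\delta_1\cdots\delta_k\delta_\infty=1$ in $\Gamma$ so that $\hat\phi(\delta_1\cdots\delta_\infty)=1$. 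This telescoping is what produces $[c_0]^m\prod_i z(\hat\phi(\delta_i))^{-1}$. You never introduce $\hat\phi$, so the relation in $\Gamma$ is never used, and the claimed algebraic identity is asserted rather than derived. The lift $\hat\phi$ is also not interchangeable with the lift $\tilde\phi$ from Definition~\ref{D:defI} (one is defined on $G_{\bar\F_p(t)}$, the other on $G_L$, and they a priori differ by a character valued in $H_2(F,c)$); the paper reconciles the two by a Lemma~\ref{L:anyphi}-type argument that re-expresses $\prod_{w\in S} z(\hat\phi(\gamma_w))$ as $\prod_w z(\tilde\phi(\gamma_w))$, and this is precisely where the ``contributions from unramified places'' enter.

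Third, the existence of a single root of unity $u$ matching every local comparison is not free. Your argument that ``$\mu_L$ surjects onto every such cyclic quotient'' only shows that at each place \emph{separately} some root of unity works; it does not show the same $u$ works at all places. The paper's proof of this is genuinely global: it works in the id\`ele class group of $M$, uses the relation $\delta_1\cdots\delta_\infty=1$ to find a single $\alpha\in M^*$ representing all the $\delta_i$ simultaneously, and uses the auxiliary maps $G_M\to\Z/4|\tilde F_c|\Z$ (sending $\delta_i\mapsto 1$ and $\delta_j\mapsto 0$) to pin down the exponents $e_i$ modulo $|\tilde F_c|$. Without this, the simultaneous-matching claim is an unsupported assertion, and it is exactly the claim whose failure would distinguish $H_2(F,c)[|\mu_K|]$ from the larger group $H_2(F,c)$.

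So the overall strategy is sound in outline, but the three ingredients you identify as ``essentially formal'' — choosing $u$, matching $\delta_i$ with $\gamma_v$, and tracking the central discrepancy to get $[c_0]^m$ — are in fact the substance of the argument, and each one requires an input your proposal does not supply: a lift over $\bar\F_p(t)$ and the relation in $\Gamma$, an intermediate constant extension $M$ with a norm descent, and a global class-field-theoretic argument over $M$.
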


Whenever we are given a prime power $q$, following \cite[Section 8.1]{Ellenberg2012} we define $\hat{\Z}$ to be the ring $\lim\limits_{\leftarrow}\Z/n\Z$  and $\hat{\Z}(1)$ to be the group $\lim\limits_{\leftarrow} \mu_n$, where $\mu_n$ denotes the group of $n$th roots of unity in an algebraic closure of $\bar{\F}_q$ and
both limits are taken over $n$ relatively prime to $q$
 (and thus $\hat{\Z}$ and $\hat{\Z}(1)$ will always implicitly depend on a chosen $q$). 
 We write $\hat{\Z}(1)^\times$ for the set of topological generators of $\hat{\Z}(1)$, which carries a simply transitive action of $\hat{\Z}^\times$.
  If $X$ is a set with an action of $\hat{\Z}^\times$, we write
 $$
 X\langle -1 \rangle :=\operatorname{Mor}_{\hat{\Z}^\times} (\hat{\Z}(1)^\times,X)
 $$
 for the set of functions $\hat{\Z}(1)^\times\ra X$ equivariant for the $\hat{\Z}^\times$ actions.  If we fix an element $\underline{u}$ of $\hat{\Z}(1)^\times$,
 elements of  $X\langle -1 \rangle$ correspond exactly to elements of $X$ (given by taking the image of $\underline{u}$).
 
 The $\delta_1,\dots,\delta_k$ above determine an element $\underline{u}$ of $\hat{\Z}(1)^\times$, which we will describe in the course of the proof of Theorem~\ref{T:same}.  We write $\Z^{c/F}$ for the free abelian group on the set of conjugacy classes in $c$.  
 Given  $\phi: G_{\bar{\F}_p(t)} \ra F$, we have an element $N(\phi)\in \Z^{c/F}$ in which the coordinate in each component gives the number of $\bar{\F}_p$ points (other than infinity) where $\phi$ has inertia of that conjugacy class.  We have a map $\Z^{c/F}\ra F^{ab}$ taking a generator for each conjugacy class the an image of an element in that conjugacy class.
  If we have an action of $\hat{\Z}^\times$ on the set $\tilde{F}_c \times_{F^{ab}}\Z^{c/F}$ (see Equation~\eqref{E:discrete} where we will define such an action), 
  then we get an element of $(\tilde{F}_c \times_{F^{ab}}\Z^{c/F}) \langle -1 \rangle$ by taking $\underline{u}$ to $$(J_{F,c,p}(\phi|_{G_{\bar{\F}_p(t)}},\delta_1,\dots,\delta_k),N(\phi))\in \tilde{F}_c \times_{F^{ab}}\Z^{c/F}.$$  
(See \cite[Proposition 8.5.2]{Ellenberg2012} for a proof that the element of   $(\tilde{F}_c \times_{F^{ab}}\Z^{c/F}) \langle -1 \rangle$ does not depend on the choice of the $\delta_i$.)
  Given $F,c,p$, we call this element of $(\tilde{F}_c \times_{F^{ab}}\Z^{c/F}) \langle -1 \rangle$ the \emph{component invariant} of $\phi$ (for reasons that will become clear in the statement of Theorem~\ref{T:Chur}). 

\begin{proof}[Proof of Theorem~\ref{T:same}]
We use the notations from Definitions~\ref{D:defI} and \ref{D:EVW}.
Since $\Gamma$ is free, we can lift $\phi: G_{\bar{\F}_p(t)} \ra F$ to a $\hat{\phi}: G_{\bar{\F}_p(t)} \ra \tilde{F}_c$ that is tame and ramified only at points in $S$.  Moreover, since $\hat{\phi}$  corresponds to a finite cover of $\P^1$, it descends to $\F_{q^r}$ for some positive integer $r$, and thus for $M=\F_{q^r}(t)$ we have that $\hat{\phi}$ extends to $\hat{\phi}: G_{M} \ra \tilde{F}_c$.
We enlarge $r$ as necessary so that all the points of $S$ are $\F_{q^r}$ points, and so that $L=\F_{q}(t)(\mu_{4|\tilde{F}_c|})$  is a subfield of $\F_{q^r}(t).$
Further, since $\Gamma$ is free pro-prime-to-$p$ on $\delta_1,\dots,\delta_k$, for each $1\leq i \leq k$, we have a continuous map $G_{\bar{\F}_p(t)}\ra \Z/4|\tilde{F}_c|\Z$ that sends $\delta_i$ to $1$ and $\delta_j$ to $0$ for $j\neq i$.  Enlarge $r$ further so that all of the maps of curves over $\bar{\F}_p$ corresponding to the above homomorphisms $G_{\bar{\F}_p(t)}\ra \Z/4|\tilde{F}_c|\Z$ descend to $\F_{q^r}$, i.e. so that
the maps of Galois groups extend to $G_M \ra \Z/4|\tilde{F}_c|\Z$.

We have
  $$
[\phi(\delta_1)][\phi(\delta_2)]\cdots [\phi(\delta_\infty)]=\hat{\phi}(\delta_1)\hat{\phi}(\delta_1)^{-1}[\phi(\delta_1)]
\hat{\phi}(\delta_2)\hat{\phi}(\delta_2)^{-1}[\phi(\delta_2)]\cdots
.
$$
Since $\hat{\phi}(\delta_i)^{-1}[\phi(\delta_i)]$ has trivial image in $F$, it is central in $\tilde{F}_c$.  Thus, we can factor the 
$\hat{\phi}(\delta_i)$ terms to the left and obtain
\begin{align*}
[\phi(\delta_1)][\phi(\delta_2)]\cdots [\phi(\delta_\infty)]&=\hat{\phi}(\delta_1\delta_2\cdots)\hat{\phi}(\delta_1)^{-1}[\phi(\delta_1)]
\hat{\phi}(\delta_2)^{-1}[\phi(\delta_2)]\cdots\\
&=\hat{\phi}(\delta_1)^{-1}[\phi(\delta_1)]
\hat{\phi}(\delta_2)^{-1}[\phi(\delta_2)]\cdots
.
\end{align*}
Let $g_i [\phi(\delta_i)] g_i^{-1}$ be above $c_0\in c$ (which, if $i=\infty$ only exists if $\infty$ is ramified in $\phi$). Then by the property of $[\cdot ]$, we have
$g_i [\phi(\delta_i)] g_i^{-1}=[c_0].$
Since $\hat{\phi}(\delta_i)^{-1}[\phi(\delta_i)]$  is central, 
$$
\hat{\phi}(\delta_i)^{-1}[\phi(\delta_i)]=
g_i \hat{\phi}(\delta_i)^{-1} g_i^{-1}
g_i [\phi(\delta_i)] g_i^{-1}=z( \hat{\phi}(\delta_i))^{-1} [c_0].
$$
Now if $\infty$ is not ramified in $\phi$, 
$$
\hat{\phi}(\delta_\infty)^{-1}[\phi(\delta_\infty)]=z(\hat{\phi}(\delta_\infty))^{-1}.
$$
Since $z( \hat{\phi}(\delta_i))^{-1}$ and $[c_0]$ all have image $c_0$ in $F$, they all commute and we have
\begin{equation}\label{E:s1}
[\phi(\delta_1)][\phi(\delta_2)]\cdots [\phi(\delta_\infty)]=[c_0]^m\prod_i z( \hat{\phi}(\delta_i))^{-1},
\end{equation}
where $m$ is the number of ramified $\bar{\F}_p$ points of $\phi$.

We will see, with some work, that $\prod_i z( \hat{\phi}(\delta_i))$ is the invariant defined in Definition~\ref{D:defI}.  It is not immediate because we are working over a field larger than $L$ and the $\delta_i$ are picked differently than the $\gamma_v$.

Now in any tame, abelian extension of $M$ unramified outside $S$, we have that $\delta_1\cdots \delta_\infty=1$ in the Galois group.  What is the Galois group of the maximal tame, abelian, unramified outside $S$ extension of $M$?
By class field theory, it is the profinite completion $\hat{\Delta}$ of
$$
\Delta:=\left(\prod_{w\in S} M_w^*/U_w \times \prod_{w\not\in S}M_w^*/\O_{M_w}^* \right)/M^*,
$$
where $U_w$ are the units that are $1$ mod $w$.  
The image of $\delta_i$ in  $\hat{\Delta}$ is a generator for the inertia subgroup for the point corresponding to $i$, and thus can be represented in $\hat{\Delta}$ by $(1,\dots,1,\alpha_i,1\dots )$, where the $\alpha_i$ is in the place corresponding to $i$ and $\alpha_i$ generates the units in that component.
Since the image of $\delta_1\cdots \delta_\infty$ in $\hat{\Delta}$ is trivial,
 there is some $\alpha\in M$, such that for all $i$ we have that $\alpha\equiv \alpha_i$ mod $w$, where $w$ the  place corresponding to $i$.  
 
 Let $\zeta$ generate the roots of unity of $M$.
 For each $i$ corresponding to a place $w\in S$, let $\zeta=\alpha_i^{e_i}$ mod $w$.  Then since $\zeta$ has trivial image in $\hat{\Delta}$, so does
$\prod_{i} \delta_i^{e_i}$.    
However, since for each $i\ne\infty$, there is a map that sends $\delta_i$ to $1$ and $\delta_\infty$ to $-1$ in $\Z/|\tilde{F}_c|\Z$ and the rest of the $\delta_{j}$ to $0$ for $j\neq i$, we have $e_i=e_\infty$ mod $|\tilde{F}_c|$ for all $i$.  Thus, if $\xi$ is the root of unity in $M$ with image $\alpha_\infty$ mod $\infty$, then for each $i$, we have $\alpha_i\equiv \xi^{a_i}$ for some $a_i\equiv 1$ mod $|\tilde{F}_c|$. Note that $\xi$ generates the roots of unity in $M$ since $\alpha_\infty$ generates the local units.  

Note that if we replace $r$ above with a multiple of $r$, and thus replace $M$ with $M'$ containing $M$, it follows from class field theory that
the chosen root of unity $\eta'$ will have $\Nm_{M'/M} \eta'=\eta$.  Since $\lim\limits_{\leftarrow} \mu_{\F_q^r} = \Z(1)$ (with the maps on the left-hand side being the norm maps), the roots of unity $\eta$ and all possible $\eta'$ determine an element $\underline{u}$ of $\Z(1)^\times$.

As in Definition~\ref{D:defI}, we pick a $\tilde{\phi}:G_L \ra \tilde{F}_c$, which restricts to $G_{M}$.
Analogous to Definition~\ref{D:defI}, except using $M$ in place of $L$, for each place $w$ of $M$, we pick $\gamma_w$ inertia elements at $w$ that in the maximal tame, abelian, exponent $|\tilde{F}_c|$ quotient has image $\xi$ in the local class field theory map for $w$.  So we have $z(\hat{\phi}(\delta_w))=z(\hat{\phi}(\gamma_w))$, by the same reasoning as in Lemma~\ref{L:anyg}.
Thus, using \eqref{E:s1}, we have 
$$
[\phi(\delta_1)][\phi(\delta_2)]\cdots [\phi(\delta_\infty)]=[c_0]^m\prod_{w\in S} z( \hat{\phi}(\gamma_w))^{-1}. 
$$
By the same argument as in Lemma~\ref{L:anyphi},
$$
\prod_{w\in S} z( \hat{\phi}(\gamma_w))=\prod_{w} z( \tilde{\phi}(\gamma_w))
$$
where the second product is over all places of $M$.  (Recall $\hat{\phi}$ is only ramified at places in $S$.)
Now consider a place $v$ of $L$ and the set $T$ of places $w$ of $M$ over $v$.  We pick a $w_0\in T$,
and $\sigma\in G_L$ be a lift of a generator of $\Gal(M/L)$.  Let $\chi$ be the cyclotomic character, and we can choose $\sigma$ such that $\chi(\sigma)=q^{-a}$, where $\F_{q^a}$ is the finite subfield of $L$.  
For each $w\in T$ we have some $i_w$ such that  $\sigma^{i_w} (w_0)=w$, and the set of $i_w$ for $w\in T$ is $0,1,\dots,|T|-1$.  Then
$\sigma^{i_w}\gamma_{w_0} \sigma^{-i_w}$ is an element of inertia at $w$ that goes to $\sigma (\xi)=\xi^{q^{-{ai_w}}}$ in the class field theory map.  So
$\sigma^{i_w} \gamma_{w_0}^{q^{ai_w}} \sigma^{-i_w}$ is an element of inertia at $w$ that goes to $\xi$ in the class field theory map.
 Then, as in Lemma~\ref{L:anyg}, we have 
$$
z( \tilde{\phi}(\gamma_w))=z( \tilde{\phi}( \sigma_w \gamma_{w_0}^{{q^{ai_w}}} \sigma_w^{-1} ))
=z( \tilde{\phi}( \gamma_{w_0}^{{q^{ai_w}}}  )),
$$
since $\tilde{\phi}$ is defined on $G_L$.
Thus,
$$
\prod_{w\in T}  z( \hat{\phi}(\gamma_w))=z( \tilde{\phi}( \gamma_{w_0}^{\frac{q^{a|T|}-1}{q^a-1}}  )).
$$
Now $\gamma_{w_0}^{\frac{q^{a|T|}-1}{q^a-1}}\in G_L$ is an inertia element for $v,$ and by the functoriality of the local class field theory map, it has image corresponding to $\Nm_{M_{w_0}/L_{w_0}} \xi^{{\frac{q^{a|T|}-1}{q^a-1}}}=\Nm_{M/L} \xi$.  Note in any extension $\F_{q^a}(t)/\F_{q^b}(t)$ the norm acts on roots of unity by powering by $(q^a-1)/(q^b-1)$. So
$$
\prod_{w\in S} z( \hat{\phi}(\gamma_w))=I_{F,\pi,\F_q(t)}(\phi,\xi^{|\mu_M|/|\mu_L|}).
$$
So, 
$$
[\phi(\delta_1)][\phi(\delta_2)]\cdots [\phi(\delta_k)]=[c_0]^m I_{F,\pi,\F_q(t)}(\phi,\xi^{|\mu_M|/|\mu_L|})^{-1}
 [\phi(\delta_\infty)]^{-1}, 
$$
which proves the theorem.
\end{proof}

\section{Proof of Theorem~\ref{T:FF}}\label{S:FF}

In this section we prove a strengthening of Theorem~\ref{T:FF}.  Each surjection counted in the numerator of $E^{\pm}_{\F_q(t),q^{2n}}(G,G')$ has an associated extension $\tilde{L}/\F_q(t)$.  These field extensions correspond to covers of the projective line, and Hurwitz spaces provide a moduli space for such covers.  By the Grothendieck-Lefschetz trace formula, if we can control the dimensions of the \'{e}tale cohomology uniformly in $q$, then the question of the $q$ limit in Theorem~\ref{T:FF} is reduced to determining the number of components in the relevant moduli space.

Ellenberg, Venkatesh, and Westerland \cite{Ellenberg2012}, building on the work of Romagny and Wewers \cite{Romagny2006}, have constructed the Hurwitz schemes necessary for our application.  Moreover, 
Ellenberg, Venkatesh, and Westerland \cite{Ellenberg2012} have proven a theorem relating their number of components to group theoretical quantities.  

To prove Theorem~\ref{T:FF}, we first prove a result in group theory that we will combine with the result on components from \cite{Ellenberg2012} to give us a count of components of the relevant Hurwitz scheme.  Then, we translate between the surjections counted by $E^{\pm}_{\F_q(t),q^{2n}}(G,G')$ and the actual points on the Hurwitz schemes. Finally, we use comparison to the Hurwitz schemes over $\C$ to uniformly bound the cohomology and carry out the plan described above.  

Note that we do not use any homological stability results, e.g. from \cite{Ellenberg2016}.

\subsection{Group theory computation}\label{S:group}
In this section, we will prove a result in group theory. This result will count $\F_q$-rational components in a moduli space on which we will eventually count points.  However, in this section we isolate the group theory involved.
In this section, we will work with a finite field $\F_q$ (really, just a prime power $q$) and  a finite group $G'$ with $(q,|G'|)=1$.  We use the prime because we will eventually apply these ideas to the group called $G'$ other places in the paper.

First we will define the \emph{universal marked central extension} $\widetilde{G'}$ of a finite group $G'$ for a union $c$ of conjugacy classes of $G'$, following
\cite[Section 7]{Ellenberg2012}.  Let $c$ be a union of conjugacy classes of $G$, whose elements generate $G$, and such that if $g\in c$ and $d$
is an integer relatively prime to the order of $g$, then $g^d\in c$.  Recall the definition of $\widetilde{G'}_c$ from Section~\ref{S:Hdefs}.
The group $\widetilde{G'}$ will also depend (implicitly) on $c$, and be different from $\widetilde{G'}_c$.
We define $\widetilde{G'}$ to be the free group on the formal symbols $[g]$ for $g\in c$, modulo the relations
$[x][y][x]^{-1}=[xyx^{-1}]$ for $x,y\in c$.  See \cite[Section 7]{Ellenberg2012} for  explanation of why this is called a universal marked central extension.  

Consider the group $\widetilde{G'}_c\times_{(G')^{ab}} \Z^{c/G'}$, where $(G')^{ab}$ is the abelianization of $G'$, and $c/G'$ denotes the set of conjugacy classes in $c$, and the map 
 $\Z^{c/G'} \ra (G')^{ab}$ sends
each standard generator to an element of the associated conjugacy class. 
Let $c$ be a union of conjugacy classes $c_i$.  For each $c_i$, we choose one element $g_i\in c_i$ and one lift $\hat{g}_i\in \widetilde{G'}_c$ of $g_i$.  
We then define $\widehat{g g_i g^{-1}}=\tilde{g} \hat{g_i} \tilde{g}^{-1}$ for any choice of lift $\tilde{g}\in\widetilde{G'}_c $ of $g$,
and since $\widetilde{G'}_c\ra G'$ is central, the definition does not depend on our choice of lift of $g$, and since any lift of an element commuting with $g_i$ commutes with $\hat{g_i}$, the definition does not depend on our choice of $g$.

For any set of choices of these $\hat{g}_i$, we have a homomorphism   $\widetilde{G'}\ra\widetilde{G'}_c\times_{(G')^{ab}} \Z^{c/G'}$ taking $[g]$ to $(\hat{g},e_g)$, where $e_g$ is the generator of $\Z^{c/G'}$ corresponding to the conjugacy class of $g$.
In \cite[Theorem 7.5.1]{Ellenberg2012}, it is shown that each of these homomorphisms is actually an isomorphism.
Throughout the paper, we fix one choice for each $\hat{g}_i$ as above, and will use the associated isomorphism $\widetilde{G'}\ra\widetilde{G'}_c\times_{(G')^{ab}} \Z^{c/G'}$.
Even though $\widetilde{G'}_c$ depends on the Schur cover of $G'$ used, from this it follows that the isomorphism type of
$\widetilde{G'}_c\times_{(G')^{ab}} \Z^{c/G'}$ does not depend on this choice.
 We have a map $\widetilde{G'} \ra G'$, given by $[g]\mapsto g$, or by projecting $\widetilde{G'}_c\times_{(G')^{ab}} \Z^{c/G'}$ to its first factor.  

 Recall from Section~\ref{SS:same} that $\hat{\Z}$ is the inverse limit 
$\varprojlim \Z/n\Z$ taken over $n$ relatively prime to $q$. 
We are now going to define an action of $\hat{\Z}^\times$ on the set $\widetilde{G'}$, called the \emph{discrete action} \cite[Section 8.1.7, Equation 9.4.1]{Ellenberg2012}. 
There is an action of $\hat{\Z}^\times$ on any finite group $F$ given by powering.   For an element $g\in F$ and $\{a_n \}_{n} \in \hat{\Z}^\times$ (where $a_n$ is the $\Z/n\Z$ coordinate of the element $\{a_n \}_{n}$), we define $g^{\{a_n \}_n}$ to be $g^{a_d}$, where $d$ is the order of $g$.  This is an action of $\hat{\Z}^\times$ on the set $F$, and it is not necessarily an action by group homomorphisms.  

   Then there is an inherited action of $\hat{\Z}^\times$ on $c$ from the powering action on $G'$, and thus on $c/G'$ by our assumption on $c$ above, and thus an action on $\Z^{c/G'}$.  For $\underline{m}\in \Z^{c/G'}$ and $\alpha\in \hat{\Z}^\times$, we write $\underline{m}^\alpha$ for the result of the action of $\alpha$ on $\underline{m}$.
For $\alpha\in\hat{\Z}^\times$, we define
$$
z_i(\alpha)=\hat{g_i}^{-\alpha} \widehat{g_i^{\alpha}}.
$$
First, we note that $z_i(\alpha)$ is defined by a product in $\widetilde{G'}_c$, but actually lies in 
$H_2(G',c)$ since its image in $G'$ is trivial.  Second, note that if $h=gg_i g^{-1}$ for some $g\in G'$, then
$$
\hat{h}^{-\alpha} \widehat{h^{\alpha}}=(\tilde{g} \hat{g_i} \tilde{g}^{-1} )^{-\alpha}
\widehat{gg_i^\alpha g^{-1}} = \tilde{g} \hat{g_i}^{-\alpha} \tilde{g}^{-1} 
\tilde{g} \widehat{g_i^\alpha} \tilde{g}^{-1}= \tilde{g} \hat{g_i}^{-\alpha}  \widehat{g_i^\alpha} \tilde{g}^{-1}=
 \tilde{g}  z_i(\alpha)\tilde{g}^{-1}=z_i(\alpha).
$$
The last equality follows because $z_i(\alpha)\in H_2(G',c)$, which is in the center of $\widetilde{G'}_c$.
Now we define
$$
Z_{\underline{m}}(\alpha)=\prod_i z_i(\alpha)^{m_i} \in H_2(G',c).
$$
Note that since $z_i(\alpha)\in H_2(G',c)$, the order in which the product is taken does not matter.

   The discrete action of $\hat{\Z}^\times$ on $\widetilde{G'}\isom\widetilde{G'}_c\times_{(G')^{ab}} \Z^{c/G'}$  is defined by
\begin{equation}\label{E:discrete}
\alpha * (h,\underline{m}  )=(h^{\alpha}Z_{\underline{m}}(\alpha),\underline{m}^\alpha).
\end{equation}
To see that this is a group action (on a set), we write 
$(h,\underline{m})=[g_1]^{e_1}\cdots [g_k]^{e_k},$ and thus $h=\hat{g_1}^{e_1}\cdots \hat{g_k}^{e_k}$,
where the $e_i$ are $\pm 1$. 
We then observe
$
\widehat{g_i^\alpha} = \hat{g_i}^{\alpha} z_i(\alpha), 
$
and thus
$$
(\widehat{g_1^\alpha}^{e_1\alpha^{-1}}\cdots \widehat{g_k^\alpha}^{e_k\alpha^{-1}})^\alpha=
((\hat{g_1}^{\alpha} z_i(\alpha))^{e_1\alpha^{-1}}\cdots (\hat{g_k}^{\alpha} z_i(\alpha))^{e_k\alpha^{-1}})^\alpha=h^{\alpha}Z_{\underline{m}}(\alpha),
$$
since the $z_i(\alpha)$ are central.
From the left-hand side it follows that the discrete action defined above is an action of the group $\hat{\Z}^\times$.

The following proposition gives the group theory result that will be necessary in our proof of Theorem~\ref{T:FF}.
\begin{proposition}\label{P:B}
Let $G'$ be a finite  group of even order and $c$ a union of conjugacy classes $c_i$ of order $2$ elements of $G'$.
 Let $q$ be a power of a prime with $(q,|G'|)=1$.
Given $k$, for each $y\in c_k$ and $\underline{n}=(n_1,n_2,\dots)\in \Z^{c/G'}$ with $n_i$ even for $i\ne k$ and $n_k$ odd, there are exactly $|H_2(G',c)[q-1]|$ elements $(x,\underline{n})\in \widetilde{G'}$ such that $x$ has image $y$ in $G'$ and $q^{-1}*(x,\underline{n})=(x,\underline{n})$.
Also, for each $\underline{n}$ with $n_i$ even for all $i$, there are exactly $|H_2(G',c)[q-1]|$ elements $(x,\underline{n})\in \widetilde{G'}$ such that $x$ has image $y=1$ in $G'$ and $q^{-1}*(x,\underline{n})=(x,\underline{n})$.

\end{proposition}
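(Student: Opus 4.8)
The plan is to turn the fixed-point condition into an affine equation on the finite abelian group $A := H_2(G',c)$ and count its solutions. First I would observe that, because every element of $c$ has order $2$ and $q$ is odd, the powering action of $\hat{\Z}^\times$ on $c$, hence on $c/G'$ and on $\Z^{c/G'}$, is trivial, so $\underline{n}^{q^{-1}} = \underline{n}$. Under the isomorphism $\widetilde{G'}\isom\widetilde{G'}_c\times_{(G')^{ab}}\Z^{c/G'}$ and the formula~\eqref{E:discrete} for the discrete action, the condition $q^{-1}*(x,\underline{n}) = (x,\underline{n})$ then reduces to the single equation $x^{q^{-1}}Z_{\underline{n}}(q^{-1}) = x$ in $\widetilde{G'}_c$. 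I would also check that the parity hypotheses on $\underline{n}$, together with the fact that each class in $c$ maps to a $2$-torsion element of $(G')^{ab}$, force the image of $\underline{n}$ in $(G')^{ab}$ to agree with the image of $y$; hence the pairs $(x,\underline{n})\in\widetilde{G'}$ with $x$ lying over the prescribed $y$ (i.e.\ in the fiber of $\widetilde{G'}_c\ra G'$ over $y$) genuinely form a coset of $A$, of size $|A|$.

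Next I would fix one lift $x_0\in\widetilde{G'}_c$ of $y$ — taking $x_0 = 1$ when $y = 1$, and for $y = g g_k g^{-1}$ taking $x_0 = \tilde g\hat g_k\tilde g^{-1}$, so that $x_0^{\,2} = \hat g_k^{\,2}$ — and write $x = x_0 z$ with $z\in A$. Since $A$ is central and powering respects products of commuting elements, $(x_0 z)^{q^{-1}} = x_0^{q^{-1}}z^{q^{-1}}$, so the equation becomes $z^{q^{-1}}z^{-1} = w$ for an explicit $w = \big(x_0^{-1}x_0^{q^{-1}}\big)^{-1}Z_{\underline{n}}(q^{-1})^{-1}\in A$. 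As $(q,|G'|) = 1$ and $\exp A$ divides $|G'|$, powering by $q^{-1}\in\hat{\Z}^\times$ on $A$ coincides with raising to a fixed integer power $a\equiv q^{-1}\pmod{\exp A}$, so $z\mapsto z^{q^{-1}}z^{-1}$ is the group endomorphism $\psi(z) = z^{a-1}$ of $A$; its solution set is empty or a coset of $\ker\psi$. An elementary step then identifies $\ker\psi$ with $H_2(G',c)[q-1]$: for $z$ of order $d\mid\exp A$ we have $(q,d) = 1$ and $qa\equiv 1\pmod d$, so $d\mid a-1 \iff d\mid q-1$.

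The hard part, and the only place the parity conditions really enter, is showing the equation is solvable, i.e.\ that $w\in\im\psi = (a-1)A$. Here I would use the explicit formulas available because $q$ is odd: for $g_i\in c_i$ of order $2$ with chosen lift $\hat g_i$ one gets $z_i(q^{-1}) = \hat g_i^{\,1-q^{-1}} = (\hat g_i^{\,2})^{(1-q^{-1})/2}$, and similarly $x_0^{-1}x_0^{q^{-1}} = (x_0^{\,2})^{(q^{-1}-1)/2}$, where $\hat g_i^{\,2},x_0^{\,2}\in A$ and the exponents are read modulo the orders of these elements. In the case $y\in c_k$ I would pair the $i=k$ factor with $x_0^{-1}x_0^{q^{-1}}$: using $x_0^{\,2} = \hat g_k^{\,2}$, their product is $(\hat g_k^{\,2})^{(n_k-1)(1-q^{-1})/2}$, and since $n_k - 1$ is \emph{even} this equals $\big((\hat g_k^{\,2})^{(n_k-1)/2}\big)^{1-q^{-1}}\in (1-q^{-1})A = (a-1)A$ (using $1-q^{-1}\equiv 1-a\pmod{\exp A}$); for each $i\ne k$ the exponent $n_i$ is even, so likewise $z_i(q^{-1})^{n_i} = \big((\hat g_i^{\,2})^{n_i/2}\big)^{1-q^{-1}}\in(a-1)A$. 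The case $y = 1$ is identical with $x_0 = 1$ and all $n_i$ even. Thus $w^{-1}$, and hence $w$, lies in $(a-1)A$, so the solution set is a nonempty coset of $\ker\psi$ of size $|H_2(G',c)[q-1]|$. I expect the remaining friction to be purely bookkeeping — keeping track of orders of the elements $\hat g_i^{\,2}$ and of the $2$-adic valuations of exponents like $(1-q^{-1})/2$ — which is exactly what the parity hypotheses on the $n_i$ are there to control.
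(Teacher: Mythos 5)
Your proposal is correct and follows essentially the same path as the paper: reduce the fixed-point condition to an affine equation in $A=H_2(G',c)$ using centrality, verify via the parity hypotheses on $\underline{n}$ that the right-hand side lies in $(q-1)A$ (hence solvability), and identify the kernel of the relevant powering map with $H_2(G',c)[q-1]$. The only cosmetic difference is that the paper first replaces the $q^{-1}*$ fixed-point condition by the equivalent $q*$ condition so as to work with integer powers of $q$ rather than of $q^{-1}$, which saves a little bookkeeping in your middle paragraph.
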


\begin{proof}
First, we note under the given hypotheses on $y$ and $\underline{n}$, that $y$ and $\underline{n}$ have the same image in $(G')^{ab}$ (since all the elements of $c$ have order $2$).  So if $(x,\underline{n})\in \widetilde{G'}_c \times \Z^{c/G}$ with $x$ having image $y$ in $G$, then $(x,\underline{n})\in \widetilde{G'}$.
Note that $q^{-1}*(x,\underline{n})=(x,\underline{n})$ is equivalent to $q*(x,\underline{n})=(x,\underline{n}).$
When $y=1$, we let $\hat{y}=1\in  \widetilde{G'}_c$, and for $y\in c$, we have chosen a lift $\hat{y}$ of $y$ to $\widetilde{G'}_c$ above. 
Since all elements of $c$ have order $2$ and $q$ is odd, we have
$$
q*(x,\underline{n})=(x^q \prod_i (\hat{g_i}^{-q} \widehat{g_i^{q}})^{n_i},\underline{n})=(x^q \prod_i (\hat{g_i}^{-q+1})^{n_i},\underline{n}).
$$
 
The preimages of $y$ in $ \widetilde{G'}_c$ are
the elements $\hat{y}h$ for $h\in H_2(G',c)$.  
Thus, we ask for how many elements $h\in H_2(G',c)$ is
\begin{equation}\label{E:ask}
(\hat{y}h)^q \prod_i (\hat{g_i}^{-q+1})^{n_i} =\hat{y}h?
\end{equation}
Since $h$ is central, the above is equivalent to
$$
\hat{y}^{q-1} \prod_i (\hat{g_i}^{-q+1})^{n_i} =h^{1-q}.
$$
 Note that if $y\neq 1$, we have $\hat{y}^{q-1}=\hat{g_k}^{q-1}$,
since if $y=gg_kg^{-1}$, we have
$$\hat{y}^{q-1} =(\tilde{g} \hat{g_k} \tilde{g}^{-1} )^{q-1}=\tilde{g} \hat{g_k}^{q-1} \tilde{g}^{-1}=\hat{g_k}^{q-1} ,
$$ where the last equality is since $\hat{g_k}^{q-1} $ has trivial image $G'$ and thus is central.
For $y\ne 1$, we have 
$$\hat{y}^{q-1} \prod_i (\hat{g_i}^{-q+1})^{n_i} = (\hat{g_k}^{-q+1})^{n_k-1}\prod_{i\ne k} (\hat{g_i}^{-q+1})^{n_i}$$
is in $H_2(G',c)^{q-1}$, since $\hat{g_i}^2\in H_2(G',c)$ for all $i$ and by the parity of the $n_i$.
So in Equation~\eqref{E:ask}, the number of such $h$ is $|H_2(G,c)[q-1]|,$ which proves the lemma for $y\ne 1$.
When $y=1$, 
$$
\prod_i (\hat{g_i}^{-q+1})^{n_i}$$
is similarly in $H_2(G,c)^{q-1}$, and the last case of the lemma follows similarly.
\end{proof}

\begin{remark}
From the proof of Proposition~\ref{P:B}, one can see that depending on the image of $y$ and the elements of $c$ in $(G')^{ab}$, as well as the image of $\hat{g}^2\in H_2(G',c)$ for $g\in c$, we can define a function on the $n_i$'s modulo $2$ that determines whether
there are any elements $(x,\underline{n})\in \widetilde{G'}$ such that $x$ has image $y$ in $G'$ and $q^{-1}*(x,\underline{n})=(x,\underline{n})$, and whenever there are such elements $(x,\underline{n})$ there are exactly $|H_2(G',c)[q-1]|$ of them.

\end{remark}

\subsection{Translation from surjections to extensions of the base field}
In this section, we translate between the surjections counted by $E^{\pm}_{\F_q(t),q^{2n}}(G,G')$ and the \emph{marked $(G',c)$ extensions} that the Hurwitz schemes will parametrize.

\begin{definition}
Let $Q$ be a global field with a  place ${\v}$.  Let $G'$ be a finite group, and $c$ a union of conjugacy classes of $G'$. We fix a separable closure $\bar{Q}_{\v}$ of the completion $Q_{\v}$.  Then, inside $\bar{Q}_{\v}$ we have the separable closure $\bar{Q}$ of $Q$.  This gives a map $\Gal(\bar{Q}_{\v}/Q_{\v})  \ra \Gal(\bar{Q}/Q)$, and in particular  distinguished decomposition and  inertia groups  in $\Gal(\bar{Q}/Q)$ at ${\v}$.  
We define (as in \cite[Section 10.2]{Ellenberg2012}) a \emph{marked $(G',c)$ extension} of $Q$ to be $(M,\pi,m)$
such that 
$M/Q$ is a Galois extension of fields, 
$\pi$ is an isomorphism $\pi: \Gal(M/Q)\isom G'$ such that all inertia groups in $\Gal(M/Q)$ (except for possibly the one at ${\v}$) have image in $\{1\}\cup c$, and $m$, the \emph{marking}, is a homomorphism $M_{\v}:=M\tensor_Q Q_{\v} \ra \bar{Q}_{\v}$. 
 Note that restriction to $M$ gives a bijection between homomorphisms 
$M_{\v} \ra \bar{Q}_{\v}$ and homomorphisms $M\ra \bar{Q}$. 
Two marked $(G',c)$ extensions $(M_1,\pi_1,m_1)$ and $(M_2,\pi_2,m_2)$ are isomorphic when there is an isomorphism $M_1\ra M_2$ taking $\pi_1$ to $\pi_2$ and $m_1$ to $m_2$.  
The marking $m$ in a marked $(G',c)$ extension $(M,\pi,m)$ gives a map $\Gal(\bar{Q}_{\v}/Q_{\v}) \ra \Gal(M/Q)$.
 Composing with $\pi$ we get an \emph{infinity type} $\Gal(\bar{Q}_{\v}/Q_{\v}) \ra G'$. 
\end{definition}

 The translation is done by the following result, which is similar to \cite[Theorem 5.1]{Boston2017}.
\begin{proposition}\label{P:trans}
Let $Q$ be a global field with a choice of place $\v$. Let $G$ be a finite group and $G'$ a subgroup of
$G \wr S_2$.  Let $c$ be the order $2$ elements of $G'$ with non-trivial image in $S_2$.

Let $\phi: \Gal(\bar{Q}_\v/Q_\v) \ra G'$ be a ramified homomorphism with image $\langle (1,\sigma) \rangle$.
There is a bijection between
\begin{align*}
&\{(K,\rho) | \\
&K\sub \bar{Q}, [K:Q]=2, K_{\v}/Q_{\v} \textrm{ the quadratic extension given by $\ker(\phi)$, } \rho\in \Sur(G_K^{\un,\v}, G) | \rho \textrm{ is type }G' \}
\end{align*}
and
$$
\{\textrm{isomorphism classes of marked $(G',c)$ extensions $(L,\pi,m)$ of $Q$ with infinity type $\phi$}  \}.
$$

There is also a bijection between
\begin{align*}
\{(K,\rho,y) |& K\sub \bar{Q}, [K:Q]=2, K_{\v}/Q_{\v} \textrm{ split completely, } \rho\in \Sur(G_K^{\un,\v},G), 
\\
& y \textrm{ a twist for $\rho$}, (\rho,y) \textrm{ of type $G'$} \}
\end{align*}
and
$$
\{\textrm{isomorphism classes of marked $(G',c)$ extensions $(M,\pi,m)$ of $Q$ with trivial infinity type}  \}.
$$ 
In these bijections, we have $\Disc(M)=\Disc(K)^{|G'|/2}$.
\end{proposition}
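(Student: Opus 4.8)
The plan is to construct the two claimed bijections by explicit mutually inverse maps: the forward direction passes to Galois closures, and the backward direction passes to the subfield of $M$ fixed by $\pi^{-1}(\ker(G'\ra S_2))$. I will assume throughout that $G'$ is \sa (this is forced whenever $G'$ arises as a type, by Proposition~\ref{P:whysa}, and is the only relevant case; in particular $(1,\sigma)\in G'$ so that a $\phi$ as in the statement exists, and $\ker(G'\ra S_2)$ surjects onto the first $G$-factor).

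For the forward map, given $(K,\rho)$ (resp. $(K,\rho,y)$), I let $L\sub K^{\un,\v}$ be the field with $\rho:\Gal(L/K)\isom G$ and put $M:=\tilde{L}\sub\bar{Q}$; in the imaginary case $y$ is taken to be the canonical order-$2$ inertia generator at $\v$ of Section~\ref{S:imagsetup}, and in the real case $y$ is the given twist. The map $\phi$ of \eqref{E:getphi} assembled from $\rho$ and $y$ is then an isomorphism $\pi:\Gal(M/Q)\isom G'$ — injective by Section~\ref{S:imagsetup}, with image $G'$ by the definition of type $G'$ — and I take the marking $m$ to be the homomorphism $M_\v\ra\bar{Q}_\v$ corresponding to the given inclusion $M\sub\bar{Q}$. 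I would then check this triple is a marked $(G',c)$ extension: since $M=\tilde{L}\sub K^{\un,\v}$, the extension $M/K$ is unramified at all finite places and split at $\v$, so any inertia subgroup of $\Gal(M/Q)$ away from $\v$ meets $\Gal(M/K)$ trivially, hence has order $\le 2$ with its nontrivial element mapping into $c$ under $\pi$; and the infinity type is $\phi$ because a homomorphism $\Gal(\bar{Q}_\v/Q_\v)\ra\langle(1,\sigma)\rangle$ is determined by its kernel, which on both sides cuts out $K_\v$ (in the imaginary case the decomposition group at $\v$ is the order-$2$ inertia group $\langle y\rangle$, which $\pi$ sends to $\langle(1,\sigma)\rangle$; in the real case both are trivial).

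For the backward map, given a marked $(G',c)$ extension $(M,\pi,m)$ of infinity type $\phi$, I use $m$ to view $M\sub\bar{Q}$ and set $K:=M^{\pi^{-1}(\ker(G'\ra S_2))}$, a quadratic extension of $Q$. Since $\mathrm{im}\,\phi=\langle(1,\sigma)\rangle$ and $(1,\sigma)\notin\ker(G'\ra S_2)$, the extension $M/K$ is split completely at $\v$; being unramified away from $\v$ as well, $M\sub K^{\un,\v}$, and $K_\v/Q_\v$ is the quadratic extension cut out by $\ker\phi$ (resp. is split). I then let $\rho:G_K^{\un,\v}\twoheadrightarrow\Gal(M/K)\stackrel{\pi}{\ra}\ker(G'\ra S_2)\ra G$, the last arrow being projection to the first $G$-factor, which is surjective by admissibility; in the real case I set $y:=\pi^{-1}((1,\sigma))$, an order-$2$ element of $\Gal(M/Q)\setminus\Gal(M/K)$, i.e. a twist for $\rho$. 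To see the two constructions are mutually inverse, the two points to check are: first, that the Galois closure over $Q$ of $L:=M^{\ker\rho}$ is all of $M$ — this holds because $\ker\rho=G'\cap(\{1\}\times G)$ contains no nontrivial normal subgroup of $G'$, since conjugation by $(1,\sigma)$ interchanges the two $G$-factors; second, that the map $\phi$ of \eqref{E:getphi} recovered from $(\rho,y)$ equals $\pi$ — for $x\in\ker(G'\ra S_2)$ with $\pi(x)=(a,b)$ one has $\rho(x)=a$, and $\rho(yxy^{-1})=b$ because $\pi(yxy^{-1})=(1,\sigma)(a,b)(1,\sigma)^{-1}=(b,a)$, so $\phi(x)=(a,b)=\pi(x)$, while $\phi(y)=(1,\sigma)=\pi(y)$; in particular $(\rho)$ (resp. $(\rho,y)$) has type $G'$. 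Matching up isomorphism classes on the marked side requires nothing extra, since a marking is the datum of an embedding $M\hookrightarrow\bar{Q}$ and an isomorphism of marked extensions respecting the markings is just the identification of the common image in $\bar{Q}$.

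Finally, for the discriminant identity: in all cases $M/K$ is unramified at every place of $K$ — at finite places because $M\sub K^{\un,\v}$, and at $\v$ because $M/K$ splits there — so its relative discriminant is trivial, and the tower formula $\disc(M/Q)=\Nm_{K/Q}(\disc(M/K))\cdot\disc(K/Q)^{[M:K]}$ gives $\Disc(M)=\Disc(K)^{[M:K]}=\Disc(K)^{|G'|/2}$, since $[M:K]=|\ker(G'\ra S_2)|=|G'|/2$. I expect the main obstacle to be the bookkeeping around the map $\phi$ of \eqref{E:getphi}: pinning down which embedding $\Gal(\tilde{L}/Q)\hookrightarrow G\wr S_2$ one obtains, and confirming both that it equals $\pi$ (so that ``type $G'$'' holds on the nose) and that $\tilde{L}$ does not shrink below $M$; the ramification analysis and the discriminant computation are then routine applications of standard Galois- and class-field-theoretic facts.
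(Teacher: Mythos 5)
Your proposal is correct and follows essentially the same route as the paper: the forward map sends $(K,\rho)$ (or $(K,\rho,y)$) to the Galois closure $\tilde{L}$ with the isomorphism $\pi$ coming from the type-$G'$ construction, and the backward map takes $(M,\pi,m)$ to the fixed field of $\pi^{-1}(\ker(G'\ra S_2))$ with $\rho$ given by restricting $\pi$ and projecting to the first $G$-factor. The paper leaves the verification that these are mutually inverse, and the discriminant computation, as routine; you have supplied those details (in particular the observation that $G'\cap(\{1\}\times G)$ contains no nontrivial normal subgroup of $G'$, which shows $\tilde{L}=M$, and the tower discriminant formula), but the underlying argument is the same.
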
  

\begin{proof}
Note that in each isomorphism class of  marked $(G',c)$ extensions of $Q$, there is a distinguished element such that $M\sub \bar{Q}$ and $m|_{M}$ is the inclusion map.  

In either case, we have $L\sub \bar{Q}$ corresponding to $\ker(\rho)$ and $\tilde{L}$ the Galois closure of $L$ over $Q$.
By the definition of type, we have $\pi: \Gal(\tilde{L}/ Q)\isom G'$.
Then $\tilde{L}$ is a marked $G'$ extension of $Q$, where $\tilde{L}\sub \bar{Q}$ and the marking is inclusion.  

Given a marked $(G',c)$ extension $(M,\pi,m)$ of $Q$ such that $M\sub \bar{Q}$, and $m$ is the inclusion, we obtain $K$ as the fixed field of the kernel of the map to $S_2$.  The isomorphism $\pi: \Gal(M/Q)\ra G'$ restricts to a homomorphism
$\pi|_{\Gal(M/K)}: \Gal(M/K) \ra G\times G$, and we obtain $\rho$ by projection onto the first factor.  In the second case, we have $y=\pi^{-1}(1,1,\sigma)$.  
It is straightforward to check that these constructions are inverses to each other, and respect the other properties in the theorem.
\end{proof}

\subsection{Properties of the Hurwitz scheme}

In the following theorem, we recall the properties of the Hurwitz scheme constructed by Ellenberg, Venkatesh, and Westerland, building on work by Romagny and Wewers.
Recall the definition of $\widetilde{G'}$ from Section~\ref{S:group} and $\widetilde{G'}\langle -1 \rangle$ from Section~\ref{SS:same}, where we use the discrete action of $\hat{\Z}^\times$ on $\widetilde{G'}$ to define $\widetilde{G'}\langle -1 \rangle$.  We have an automorphism of $\Z(1)$ given by taking $q$th powers, and for $f\in\widetilde{G'}\langle -1 \rangle$, we define $f^F(u)=f(u^{q^{-1}})=q^{-1} * f(u)$.

\begin{theorem}[Ellenberg, Venkatesh, and Westerland]\label{T:Chur}\footnote{The paper \cite{Ellenberg2012} has been temporarily withdrawn by the authors because of a gap which affects
Sections 6, 12 and some theorems of the introduction of \cite{Ellenberg2012}. That gap does not affect any of the
results from \cite{Ellenberg2012} that we use in this paper.}
Let $G'$ be a finite group with trivial center and let $c$ be a union of conjugacy classes of order $2$ elements of $G'$, and such that the elements of $c$ generate $G'$. We let $c/G'$ denote the set  $\{c_1,c_2,\dots\}$ of conjugacy classes of $c$.
 Let $\F_q$ be a finite field with $q$ relatively prime to $| G'|$.
 Let $\underline{n}=(n_1,n_2,\dots)\in \Z^{c/G}$ with $n=\sum_i n_i$.
There is a Hurwitz scheme $\CCHur_{G',n}$ over $\Z[| G'|^{-1}]$ constructed in \cite[Section 8.6.2]{Ellenberg2012} with the following properties:
 \begin{enumerate}

\item We have $\CCHur_{ G',n}$ is a finite \'etale cover of the relatively smooth $n$-dimensional configuration space $\Conf^n$ of $n$ distinct unlabeled points in $\A^1$ over $\Spec \Z[| G'|^{-1}]$.
\label{i:smooth} 
 
 \item  \label{i:bij} 
 For $g\in c\cup 1$, the scheme $\CCHur_{ G',n}\tensor \F_q$ has an open and closed subscheme $\CCHur^{c,g}_{ G',\underline{n}}$
 (which is over $\F_q$, but the $q$ is suppressed from the notation) such that there is a bijection between
 \begin{enumerate}
 \item isomorphism classes of  marked $(G',c)$-extensions $M$ of $\F_q(t)$ such that for $i\geq 1$ the total degree of the non-infinite places of $\F_q(t)$ with inertia in $c_i$ is $n_i$
 and an infinity type $\phi$ such that $\phi(F_\Delta)=1$ and $\phi$ sends a generator of tame inertia to $g$ (where $F_\Delta$ is a lift of Frobenius to $\Gal(\overline{\F_q(t)_{\v}}/\F_q(t)_{\v})$ that acts trivially on $\F_q((t^{-1/\infty}))$). 
 \item points of $\CCHur^{c,g}_{ G',\underline{n}}(\F_q)$ \cite[Section 10.4]{Ellenberg2012}. 
 \end{enumerate}

\item 
Let $\Frob$ denote the Frobenius automorphism on $\CCHur^{c,g}_{G',\underline{n}}  \tensor {\bar{\F}_q}$, induced by the automorphism of $\bar{\F}_q$ over $\F_q$ sending $a\mapsto a^q$. 
Given $G'$, there is an $N_0$, only depending on $G'$ such that for all $\underline{n}$ with $n_i\geq N_0$ for all $i$, the components of $\CCHur^{c,g}_{G',\underline{n}}  \tensor {\bar{\F}_q}$ fixed by $\Frob$ are in bijection with elements  $f\in \widetilde{G'}\langle -1 \rangle$ such that $f^F=f$ and for any $u\in \hat{\Z}(1)^\times$ we have $f(u)=(x,\underline{n})$, where $x$ has image $g$ in $G'$  \cite[Theorem 8.7.3]{Ellenberg2012} (see Section~\ref{S:group} for definitions).  For a component of $\CCHur^{c,g}_{ G',\underline{n}}$ in bijection with $f\in \widetilde{G'}\langle -1 \rangle$, an $\F_q$ point of that component corresponds to an extension via \eqref{i:bij} above, and
the component invariant of that extension (defined in Section~\ref{SS:same}) is $f$.

\label{i:comp}

 \end{enumerate}
\end{theorem}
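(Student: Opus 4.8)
The plan is to assemble Theorem~\ref{T:Chur} from \cite{Ellenberg2012} together with its inputs; every clause is essentially a restatement of a result there, so the work is matching conventions rather than proving anything new. (As in the footnote, the parts of \cite{Ellenberg2012} invoked here --- the construction in Section~8.6.2, the moduli description in Section~10.4, and the component count Theorem~8.7.3 --- are not among the sections affected by the withdrawn gap.) For the first assertion, I would recall from \cite[Section 8.6.2]{Ellenberg2012}, which builds on Romagny--Wewers \cite{Romagny2006}, the construction of $\CCHur_{G',n}$ as a moduli space of tamely ramified $G'$-covers of $\A^1$ branched at $n$ points, everything taken over $\Z[|G'|^{-1}]$ so that tameness is available. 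The hypothesis that $G'$ has trivial center is exactly what \cite[Section 8.6.2]{Ellenberg2012} needs in order to rigidify the moduli problem (kill automorphisms of the covers), so that $\CCHur_{G',n}$ is representable by a scheme rather than merely an algebraic stack. Forgetting the cover and remembering its branch divisor gives the map to the configuration space $\Conf^n$; since the covers are tame this map is finite (properness of Hurwitz spaces of tame covers being standard) and \'etale (local constancy of monodromy in a family), which is the first assertion.

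For the second assertion, I would observe that over $\F_q$ the combinatorial data attached to an $\F_q$-point --- the tuple $\underline{n}=(n_1,n_2,\dots)$ recording how the branch points distribute among the conjugacy classes $c_i$, and the class $g\in c\cup\{1\}$ of the local monodromy at $\infty$ --- are locally constant in families, hence cut out an open and closed subscheme $\CCHur^{c,g}_{G',\underline{n}}$. The claimed bijection between its $\F_q$-points and marked $(G',c)$-extensions with the prescribed inertia degrees and infinity type is the dictionary of \cite[Section 10.4]{Ellenberg2012}: a $G'$-cover of $\P^1_{\F_q}$ with the given branching translates, by Galois theory of $\F_q(t)$, into such an extension $M/\F_q(t)$, the marking $m$ encoding the chosen $\F_q$-point of $\CCHur$ over the configuration, and the condition $\phi(F_\Delta)=1$ pinning down that Frobenius acts trivially on the totally ramified tower at $\infty$, so that the infinity type is $\phi$ and a generator of tame inertia at $\infty$ maps to $g$.

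For the component count, I would pass to $\bar{\F}_q$ --- equivalently, by Grothendieck's comparison of \'etale and topological fundamental groups \cite{Grothendieck2003}, to $\C$ --- where the connected components of the Hurwitz space are the orbits of the braid/mapping-class-group action on the relevant set of Nielsen tuples in $c^n$ with product constrained by the data at $\infty$. The substantive input is \cite[Theorem 8.7.3]{Ellenberg2012}, which uses the homological stability and stable-$H_0$ machinery of \cite{Ellenberg2016,Ellenberg2012} to show that once every $n_i$ exceeds a threshold $N_0$ depending only on $G'$, these orbits are in natural bijection with the fiber of $\widetilde{G'}\to\Z^{c/G'}$ over $\underline{n}$ lying above $g$, i.e.\ with elements $(x,\underline{n})\in\widetilde{G'}$ whose image in $G'$ is $g$. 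Frobenius permutes the geometric components, and the key normalization is that it acts through the cyclotomic character, which in the conventions of Section~\ref{S:group} is precisely the discrete action of $q^{-1}\in\hat{\Z}^\times$; hence the $\Frob$-fixed components correspond to the $\hat{\Z}^\times$-equivariant functions $f\in\widetilde{G'}\langle -1\rangle$ with $f^F=f$ and $f(u)=(x,\underline{n})$, $x\mapsto g$. Finally, that the component invariant of Section~\ref{SS:same} of an $\F_q$-point on the component labeled by $f$ equals $f$ follows by unwinding Definition~\ref{D:EVW}: the invariant $J_{F,c,p}$ was defined as the product $[\phi(\delta_1)]\cdots[\phi(\delta_k)]$ of marked generators, and the symbols $[g]$ with their conjugation relations are exactly the generators and relations presenting $\widetilde{G'}$ in Section~\ref{S:group}, so the two labelings agree.

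The main obstacle is genuinely just the importation of \cite[Theorem 8.7.3]{Ellenberg2012}: the identification of the stable $H_0$ of the Hurwitz space with $\widetilde{G'}$, and the verification that Frobenius acts by $q^{-1}\in\hat{\Z}^\times$ rather than some other twist, are the hard points; the rest is bookkeeping with the moduli description of Section~10.4 and with the place where the trivial-center hypothesis enters. No new mathematics is required beyond carefully transcribing \cite{Ellenberg2012} into the notation of this paper, and confirming that the component invariant of Section~\ref{SS:same} is literally the label produced by Theorem~8.7.3.
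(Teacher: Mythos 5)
Your proposal is correct and follows essentially the same route as the paper, which is to say the paper does not reprove this theorem either: it is an assembly of \cite[Section 8.6.2]{Ellenberg2012} (representability, using the trivial-center hypothesis), \cite[Section 10.4]{Ellenberg2012} (the moduli description over $\F_q$), and \cite[Theorem 8.7.3]{Ellenberg2012} (the component count with Frobenius acting by $q^{-1}$ via the discrete action), exactly as you describe. The only wrinkle you do not quite match is that the paper's remark after the theorem attributes the open-and-closed-subscheme claim specifically to the argument in \cite[Section 7.3]{Ellenberg2016} rather than to general local constancy, and the paper is careful to note that it does not rely on the higher homological stability results of \cite{Ellenberg2016}, only on the $\pi_0$-level statement \cite[Theorem 8.7.3]{Ellenberg2012}, so your phrase ``homological stability machinery'' is slightly looser than what is actually invoked.
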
 

\begin{remark}
The scheme $\CCHur^{c,g}_{G',\underline{n}}$ comes from restricting to the parametrization of covers of $\P^1$  all of whose local inertia groups have image 
 in $c \cup \{1\}$ and whose inertia at infinity has image $g$ in $G'$.  The argument that $\CCHur^{c,g}_{G',\underline{n}}$ is an open and closed subscheme is as in \cite[Section 7.3]{Ellenberg2016}. 
\end{remark}

 \subsection{Counting $\F_q$ points}
In this section, we will count points of $\CCHur_{G',n}$ in Theorem~\ref{T:count}, which, when combined with Propositions~\ref{P:B} and \ref{P:trans} will allow us to finally prove Theorem~\ref{T:FF}.

\begin{theorem}\label{T:count}
Let $G'$ and $c$ be as in Theorem~\ref{T:Chur}.  
Let $g\in c\cup \{1\}$.
For each positive integer $n$ there is a constant $K_n$ such that for
 $(q,|G'|)=1$ and $\underline{n}$ with $\sum_i n_i=n$, and 
 $C$  a  $\Frob$ fixed component of $\CCHur^{c,g}_{G',\underline{n}}  \tensor {\bar{\F}_q}$,
 we have
$$
|\#C({\F}_q) - q^{n}|\leq  K_n q^{n-1/2}.
$$
\end{theorem}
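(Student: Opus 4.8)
The plan is to apply the Grothendieck--Lefschetz trace formula to $C$ and then to control the resulting cohomology dimensions uniformly in $q$ by comparison with the complex topology of the Hurwitz space, in the style of \cite{Ellenberg2012} (cf.\ also \cite{Ellenberg2016}). Fix a prime $\ell$ not dividing $q|G'|$. Since $C$ is a $\Frob$-fixed connected component of $\CCHur^{c,g}_{G',\underline{n}}\otimes\bar{\F}_q$, and by Theorem~\ref{T:Chur} \eqref{i:smooth} this scheme is finite \'etale over the relatively smooth $n$-dimensional configuration space $\Conf^n$, the component $C$ descends to a smooth, geometrically irreducible $\F_q$-scheme of dimension $n$. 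The trace formula then gives
\[
\#C(\F_q)=\sum_{i=0}^{2n}(-1)^i\Tr\!\big(\Frob\mid H^i_c(C,\Q_\ell)\big),
\]
where I write $C$ also for its base change to $\bar{\F}_q$. The top term $H^{2n}_c(C,\Q_\ell)$ is $\Q_\ell(-n)$ by Poincar\'e duality and geometric irreducibility, so $\Frob$ acts on it by $q^n$, while $H^i_c(C,\Q_\ell)$ vanishes for $i>2n$. Hence $\#C(\F_q)-q^n=\sum_{i<2n}(-1)^i\Tr(\Frob\mid H^i_c(C,\Q_\ell))$, and by Deligne's Weil~II bound each eigenvalue of $\Frob$ on $H^i_c(C,\Q_\ell)$ has absolute value at most $q^{i/2}\le q^{n-1/2}$ for $i\le 2n-1$. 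Therefore
\[
\big|\#C(\F_q)-q^n\big|\le q^{n-1/2}\sum_{i=0}^{2n-1}\dim_{\Q_\ell}H^i_c(C,\Q_\ell),
\]
and it remains to bound this cohomology sum by a constant $K_n$ independent of $q$, of $\underline{n}$ (with $\sum_i n_i=n$), and of $C$.

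Next I would remove the dependence on $C$ and on $\underline{n}$. Since $\CCHur^{c,g}_{G',\underline{n}}\otimes\bar{\F}_q$ is smooth it is the disjoint union of its finitely many connected components, so compactly supported cohomology is additive over that decomposition and $\dim_{\Q_\ell}H^i_c(C,\Q_\ell)\le\dim_{\Q_\ell}H^i_c(\CCHur^{c,g}_{G',\underline{n}}\otimes\bar{\F}_q,\Q_\ell)$; and by Theorem~\ref{T:Chur} \eqref{i:bij} the latter scheme is open and closed in $\CCHur_{G',n}\otimes\bar{\F}_q$, so this is in turn at most $\dim_{\Q_\ell}H^i_c(\CCHur_{G',n}\otimes\bar{\F}_q,\Q_\ell)$. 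The right-hand side now depends only on $G'$, $n$ and $q$.

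The heart of the argument --- and the step I expect to be the main obstacle --- is then to see that $\sum_i\dim_{\Q_\ell}H^i_c(\CCHur_{G',n}\otimes\bar{\F}_q,\Q_\ell)$ is bounded independently of $q$ for $(q,|G'|)=1$. Here I would use that $\CCHur_{G',n}$ is of finite type over $\Z[|G'|^{-1}]$, finite \'etale over $\Conf^n$, and that its compactly supported $\ell$-adic cohomology agrees with the singular cohomology of the complex Hurwitz space: via Artin's comparison theorem together with the spreading-out and base-change package underlying the component description of \cite{Ellenberg2012} (cf.\ \cite[Sections~8.7 and 10.4]{Ellenberg2012}), one gets $\dim_{\Q_\ell}H^i_c(\CCHur_{G',n}\otimes\bar{\F}_q,\Q_\ell)=\dim_{\Q}H^i_c(\CCHur_{G',n}(\C),\Q)$ for every $q$ coprime to $|G'|$. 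Since $\CCHur_{G',n}(\C)$ is a finite \'etale cover of the configuration space of $n$ points in $\A^1(\C)$, which has the homotopy type of a finite CW complex, it too has the homotopy type of a finite CW complex, so its total Betti number --- hence, by Poincar\'e duality for this smooth complex $n$-fold, also its total compactly supported Betti number --- is finite and bounded in terms of $n$ and $|c|$ alone; taking $K_n=\sum_i\dim_{\Q}H^i_c(\CCHur_{G',n}(\C),\Q)$ then completes the proof. (If one prefers to avoid the precise comparison, Deligne's generic base change theorem already yields a bound on $\dim_{\Q_\ell}H^i_c(\CCHur_{G',n}\otimes\bar{\F}_q,\Q_\ell)$ uniform over all residue characteristics prime to $|G'|$, which is all that is needed.) Everything apart from this last comparison input is formal once Theorem~\ref{T:Chur} is in hand.
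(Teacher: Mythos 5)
Your proposal is correct and follows essentially the same route as the paper: Grothendieck--Lefschetz applied to the $\Frob$-fixed component, the top-degree term contributing $q^n$, Deligne's Weil~II bound for the lower terms, and then a comparison of $\ell$-adic cohomology with the topology of the complex Hurwitz space to obtain a $q$-independent bound on Betti numbers. The only real difference is that the paper pins down the comparison step more concretely --- it invokes \cite[Proposition 7.7]{Ellenberg2016} (strengthened from $\Z/\ell\Z$ to $\Q_\ell$ coefficients by a limit argument), applied to $\CCHur^{c,g}_{G',\underline{n}}\times_{\Conf^n}\operatorname{PConf}_n$ followed by $S_n$-invariants --- whereas your appeal to Artin comparison plus spreading-out (or, alternatively, Deligne's generic base change) is a correct but less detailed account of the same input.
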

\begin{proof}
Our theorem will follow by applying the Grothendieck-Lefschetz trace formula to $C$.
 By Theorem~\ref{T:Chur} \eqref{i:smooth}, we have that $X:= \CCHur^{c,g}_{G',\underline{n}} \tensor {{\F}_q}$ is smooth of dimension $n$.
 We have that $\dim H^i_{\textrm{c,\'et}}(X_{\bar{\F}_q},\Q_\ell)=
 \dim H^{2n-i}_{\textrm{\'et}}(X_{\bar{\F}_q},\Q_\ell)$ by Poincar\'{e} Duality.
 
Next, we will relate $\dim H^{j}_{\textrm{\'et}}(X_{\bar{\F}_q},\Q_\ell)$ to 
 $\dim H^{j}( \CCHur^{c,g}_{G',\underline{n}}(\C),\Q_\ell)$ for some $\ell>n$.  To compare \'{e}tale cohomology between characteristic $0$ and positive characteristic, we will use \cite[Proposition 7.7]{Ellenberg2016}.  The result \cite[Proposition 7.7]{Ellenberg2016} gives an isomorphism between \'{e}tale cohomology between characteristic $0$ and positive characteristic in the case of a finite cover of a complement of a reduced normal crossing divisor in a smooth proper scheme.  Though \cite[Proposition 7.7]{Ellenberg2016} is only stated	 for \'{e}tale cohomology with coefficients in $\Z/\ell\Z$, the argument goes through identically for coefficients in $\Z/\ell^k\Z$, and then we can take the inverse limit and tensor with $\Q_{\ell}$ to obtain the result of \cite[Proposition 7.7]{Ellenberg2016} with $\Z/\ell\Z$ coefficients replaced by $\Q_\ell$ coefficients.  So we apply this strengthened version to conclude that 
$ \dim H^{j}_{\textrm{\'et}}(X_{\bar{\F}_q},\Q_\ell)=\dim H^{j}_{\textrm{\'et}}(( \CCHur^{c,g}_{G',\underline{n}})_{\C},\Q_\ell)$.
(As in \cite[Proof of Proposition 7.8]{Ellenberg2016}, we apply comparison to $ \CCHur^{c,g}_{G',\underline{n}} \times_{\Conf^n } \operatorname{PConf}_n,$ where $\operatorname{PConf}_n$ is the moduli space of $n$ labelled points on $\A^1$, and is the complement of a relative normal crossings divisor in a smooth proper scheme \cite[Lemma 7.6]{Ellenberg2016}.
Then we take $S_n$ invariants to compare the \'etale cohomology of $\CCHur^{c,g}_{G',\underline{n}}$ across characteristics.)
By the comparison of \'{e}tale and analytic cohomology \cite[Expos\'e XI, Theorem 4.4]{1973} $\dim H^{j}( \CCHur^{c,g}_{G',\underline{n}}(\C),\Q_\ell)=\dim H^{j}_{\textrm{\'et}}(( \CCHur^{c,g}_{G',\underline{n}})_{\C},\Q_\ell)$.  
Given $n,$ let $B(n)=\max_{k,\underline{n},1\leq j< 2n} \dim H^{j}( \CCHur^{c,g}_{G',\underline{n}}(\C),\Q_\ell),$
where the max is over $\underline{n}$ with $n=\sum_{i} n_i$.

Then by the Grothendieck-Lefschetz trace formula we have
$$
\#C({\F}_q) =\sum_{j\geq 0} (-1)^j\Tr(\Frob|_{ H^j_{\textrm{c,\'et}}(C,\Q_\ell)})
$$
and 
also we know $\Tr(\Frob|_{ H^{2n}_{\textrm{c,\'et}}(C,\Q_\ell)})$ is $q^{n}$, since $C$ is a component of $X_{\bar{\F}_q}$ fixed by $\Frob$.
Since $X$ is smooth, we have that the absolute value of any eigenvalue of $\Frob$ on $H^j_{\textrm{c,\'et}}(X_{\bar{\F}_q},\Q_\ell)$ is at most
$q^{j/2}$ (from Deligne's theory of weights on \'etale cohomology), and thus similarly for eigenvalues of $\Frob$ on $H^j_{\textrm{c,\'et}}(C,\Q_\ell)$.
Thus, 
\begin{align*}
\left|\#C({\F}_q)  - q^{n} \right |&=\left| \sum_{0\leq j< 2n} (-1)^j\Tr(\Frob|_{ H^j_{\textrm{c,\'et}}(C,\Q_\ell)}) \right| \\
&\leq \sum_{0\leq j< 2n} q^{j/2} B(n) .
\end{align*}
 The theorem follows, with $K_n=\sum_{0\leq j< 2n} B(n)$.
\end{proof}

Here we will give a stronger version of Theorem~\ref{T:FF}. 
Recall from Section~\ref{S:Edefs} that a for a quadratic extension $K/\F_q(t)$ and a $\rho\in \Sur(G_K^{\un,\v}, G)$ of type $G'$, we have constructed a map $G_{\F_q(t)}\ra G'$ (also using a twist for $\rho$ in the real case).
 Recall for a map $G_{\F_q(t)}\ra G'$ with inertia in $c$ corresponding to an extension of discriminant with norm $q^{2n}$, we have defined in Section~\ref{SS:same} a component invariant  $f\in\widetilde{G'}\langle -1 \rangle$ such that for any $u\in\hat{\Z}(1)^\times$ we have $f(u)=(x,\underline{n})\in \tilde{G'_c}\times_{(G')^{ab}} \Z^{c/G'}$ with $\sum_i n_i=2n-\epsilon$ (where $\epsilon$ is $1$ if $K$ is ramified at infinity and $0$ otherwise).

\begin{theorem}\label{T:SFF}
Consider a finite group $G$ and an \sa subgroup $G'\sub G\wr S_2$ (where $G'$ has trivial center),
and $c$ the set of order $2$ elements of $G' \setminus \ker(G'\ra S_2)$. 
For each prime power $q$ with $(q,|G'|)=1$, let  $f_q \in\widetilde{G'}\langle -1 \rangle$ with $f_q^F=f_q$
and  $u_q\in\hat{\Z}(1)^\times$.
 If $G'$ is good, for $n$ sufficiently large, 
 if $f_q(u_q)=(x,2n-1)$ and $x$ has image $(1,1,\sigma)$ in $G'$ then
   $$
\lim_{\substack{q\ra\infty\\(q,|G'|)=1}}\frac{\sum_{K\in IQ_{=q^{2n}}}
   \#\{\rho\in \Sur(G_K^{\un,\v}, G) | \rho \textrm{ is type }G',\textrm{component invariant $f_q$}\}    }{\sum_{K\in IQ_{=q^{2n}}} 1} =1,
    $$ 
 and  if $f_q(u_q)=(x,2n)$ and $x$ has image $1$ in $G$ then
 $$
\lim_{\substack{q\ra\infty\\(q,|G'|)=1}} \frac{\sum_{K\in RQ_{=q^{2n}}}  \#\{(\rho,y) | \rho\in \Sur(G_K^{\un,\v}, G), y\textrm{ \twist for }\rho, (\rho,y) \textrm{  type }G', \textrm{component invt. $f_q$}\}      }{\sum_{K\in RQ_{=q^{2n}}} 1}=1.$$
If $G'$ is not good, then there is a constant $w_G$ such that for $n$ sufficiently large we have
\begin{align*}
\lim_{\substack{q\ra\infty\\(q,|G'|)=1}}
\frac{\tilde{E}^{\pm}_{\F_q(t),q^{2n}}(G,G')}{|H_2(G',c)[|\mu_{\F_q(t)}|]|}
\geq w_G n^{N_{G'}-1}, 
\end{align*}
where $\mu_{\F_q(t)}$ is the group of roots of unity of $\F_q(t)$, so $|\mu_{\F_q(t)}|=q-1$, and $N_{G'}$ is the number of conjugacy classes of order $2$ elements of $G'$ not in $\ker \pi$.
\end{theorem}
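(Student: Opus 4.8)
The plan is to realize all the objects being counted as $\F_q$-points of the Hurwitz schemes $\CCHur^{c,g}_{G',\underline n}$ of Theorem~\ref{T:Chur}. First I would use Proposition~\ref{P:trans} to replace the pairs $(K,\rho)$ (imaginary case) and triples $(K,\rho,y)$ (real case) of type $G'$ by isomorphism classes of marked $(G',c)$-extensions of $\F_q(t)$ with a fixed infinity type: in the imaginary case this is the unique homomorphism $\phi$ sending a generator of tame inertia to $g:=(1,1,\sigma)\in c$ with $\phi(F_\Delta)=1$, and one checks that this is precisely the infinity type forced by the ``$\F_q(\sqrt t)$-type'' condition (this is the reason that condition is built into the definition of imaginary); in the real case the marked extensions have trivial infinity type, so $g=1$. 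Since $K$ with $\Nm\Disc(K)=q^{2n}$ has discriminant of degree $2n$, a contribution $\epsilon$ of which ($\epsilon=1$ imaginary, $0$ real) comes from $\infty$, the remaining $2n-\epsilon$ geometric branch points of the associated cover of $\P^1$ are partitioned among the conjugacy classes of $c$ by some $\underline n\in\Z^{c/G'}$ with $\sum_i n_i=2n-\epsilon$. Combining this with Theorem~\ref{T:Chur}\eqref{i:bij} and sorting by $\underline n$,
\[
\sum_{K}\#\{\rho\text{ of type }G'\}\;=\;\sum_{\substack{\underline n\in\Z^{c/G'}\\ \sum_i n_i=2n-\epsilon}}\#\CCHur^{c,g}_{G',\underline n}(\F_q),
\]
and, refining by component invariant, $\sum_K\#\{\rho\text{ of type }G',\text{ component invariant }f_q\}=\#C(\F_q)$, where by Theorem~\ref{T:Chur}\eqref{i:comp} the scheme $C$ is the unique $\Frob$-fixed geometric component of $\CCHur^{c,g}_{G',\underline n}$ carrying $f_q$, with $\underline n$ the $\Z^{c/G'}$-coordinate of $f_q(u_q)$, so $\dim C=\sum_i n_i=2n-\epsilon$. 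For the denominators, a standard count of squarefree polynomials (using that $q$ is odd) gives $\#IQ_{=q^{2n}}=q^{2n-1}-q^{2n-2}$ and $\#RQ_{=q^{2n}}=q^{2n}-q^{2n-1}$ for $n$ large, i.e.\ both are $q^{2n-\epsilon}(1-q^{-1})$.

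In the good case $c$ is a single conjugacy class, so $\Z^{c/G'}=\Z$ and $\underline n=2n-\epsilon$; for $n$ large enough that $2n-\epsilon\ge N_0$, the hypotheses placed on $f_q$ are exactly those of Theorem~\ref{T:Chur}\eqref{i:comp} (and Proposition~\ref{P:B} shows such $f_q$ exist), so the component $C$ is well-defined, and Theorem~\ref{T:count} gives $\#C(\F_q)=q^{2n-\epsilon}+O_n(q^{2n-\epsilon-1/2})$ with constant independent of $q$. Dividing by $q^{2n-\epsilon}(1-q^{-1})$ and letting $q\to\infty$ gives the limit $1$, in both the imaginary and real cases.

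When $G'$ is not good, $N_{G'}=|c/G'|\ge 2$ and I only need a lower bound. By Proposition~\ref{P:B} (and the remark after it), for every $\underline n$ with $\sum_i n_i=2n-\epsilon$, with all $n_i\ge N_0$, and with the parities it dictates (namely $n_{k_0}$ odd and the rest even when $g\in c_{k_0}$, or all $n_i$ even when $g=1$), the scheme $\CCHur^{c,g}_{G',\underline n}$ has exactly $|H_2(G',c)[q-1]|$ $\Frob$-fixed connected components, each defined over $\F_q$ and each contributing $q^{2n-\epsilon}+O_n(q^{2n-\epsilon-1/2})$ points by Theorem~\ref{T:count}; since these components are disjoint, $\#\CCHur^{c,g}_{G',\underline n}(\F_q)\ge |H_2(G',c)[q-1]|(q^{2n-\epsilon}-K_n q^{2n-\epsilon-1/2})$. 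The substitution $n_i=a_i+2m_i$ shows the number $P(n)$ of such $\underline n$ equals $\binom{n-C+N_{G'}-1}{N_{G'}-1}\sim n^{N_{G'}-1}/(N_{G'}-1)!$ for a constant $C=C(G')$. Summing over these $\underline n$ and dividing,
\[
\frac{\tilde E^{\pm}_{\F_q(t),q^{2n}}(G,G')}{|H_2(G',c)[q-1]|}\;\ge\;\frac{|H_2(G',c)[q-1]|\,P(n)\,(q^{2n-\epsilon}-K_n q^{2n-\epsilon-1/2})}{|H_2(G',c)[q-1]|\,q^{2n-\epsilon}(1-q^{-1})}\;=\;P(n)\,\frac{1-K_n q^{-1/2}}{1-q^{-1}},
\]
so letting $q\to\infty$ (the factor $|H_2(G',c)[q-1]|$ cancels) gives $\lim_{q\to\infty}\tilde E^{\pm}_{\F_q(t),q^{2n}}(G,G')/|H_2(G',c)[q-1]|\ge P(n)\ge w_G\,n^{N_{G'}-1}$ for $n$ large, e.g.\ $w_G=\tfrac{1}{2(N_{G'}-1)!}$.

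The cohomological input needed — uniformity of the Betti numbers in $q$, via comparison with characteristic $0$ — is already packaged in Theorem~\ref{T:count}, and the group-theoretic component count in Theorem~\ref{T:Chur}\eqref{i:comp} together with Proposition~\ref{P:B}; so the remaining work is bookkeeping. I expect the main obstacle to be the first step: correctly matching the local data at $\infty$ between the arithmetic side (the ``imaginary'' and ``split'' conditions and the resulting decomposition subgroup at $\infty$) and the infinity-type conditions $\phi(F_\Delta)=1$ and $\phi(\text{tame inertia})=g$ built into Theorem~\ref{T:Chur}\eqref{i:bij}, while keeping exact track of discriminant degrees so that $\dim\CCHur^{c,g}_{G',\underline n}=2n-\epsilon$ matches the exponent in the denominator $q^{2n-\epsilon}(1-q^{-1})$.
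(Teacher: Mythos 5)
Your proposal is correct and follows essentially the same route as the paper's proof: translate surjections to marked $(G',c)$-extensions via Proposition~\ref{P:trans}, identify these with $\F_q$-points of the Hurwitz schemes $\CCHur^{c,g}_{G',\underline{n}}$ (with $g=(1,1,\sigma)$ in the imaginary case and $g=1$ in the real case) via Theorem~\ref{T:Chur}\eqref{i:bij}, use Theorem~\ref{T:Chur}\eqref{i:comp} and Proposition~\ref{P:B} to count $\Frob$-fixed components, apply the uniform point count of Theorem~\ref{T:count}, and divide by the squarefree-polynomial count of quadratic fields. Your bookkeeping of the discriminant degrees (the exponent $2n-\epsilon$ with $\epsilon=1$ imaginary, $\epsilon=0$ real, so that $\dim C$ matches the exponent in the denominator $q^{2n-\epsilon}(1-q^{-1})$) is exactly what makes the ratio tend to $1$, and your not-good lower bound, counting valid parity-constrained $\underline{n}$ with all $n_i\ge N_0$ to get $P(n)\asymp n^{N_{G'}-1}$ and multiplying by $|H_2(G',c)[q-1]|$ components each of size $q^{2n-\epsilon}(1+O_n(q^{-1/2}))$, matches the paper's argument verbatim up to the choice of explicit constant.
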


\begin{remark}
Note that in the first two limits above, $f_q$ changes with $q$.  However, since the component invariants are in 
$\widetilde{G'}\langle -1 \rangle$, which implicitly depends on $q$, there is no way to talk about a component invariant that is independent of $q$.  (Even if there were  a way to identify component invariants between different $q$, the statement above is stronger than a statement that requires a relationship between $f_{q}$ and $f_{q'}$.)
\end{remark}

Next we confirm that Theorem~\ref{T:SFF} does indeed imply the first statement of Theorem~\ref{T:FF}.  (The second statements of both theorems are the same.)
\begin{proof}[Proof of Theorem~\ref{T:FF}]
We  will sum Theorem~\ref{T:SFF} over all possible component invariants.  
The image of $f_q(u_q)$ in $G'$ is a generator for inertia at $\infty$ (from the definition of the component invariant).
The second component of $f_q(u_q)$ is the norm of the discriminant of $K$ without the contribution of 
the ramification at $\infty$ (from the definition of the component invariant).
 We have $f_q^F=f_q$ if and only if $f_q(u_q)=(x,m)$ with $q^{-1}
*  (x,m)=(x,m)$  (by definition of $f_q^F$).  
In the imaginary quadratic case, by construction of the map $G_{\F_q(t)}\ra G'$ associated $\rho$, 
the inertia at $\infty$ maps to $(1,1,\sigma)$.  
In the real quadratic case, in the map $G_{\F_q(t)}\ra G'$,
the inertia at $\infty$ maps to $1$. 
Thus the component invariants handled by Theorem~\ref{T:SFF} are all possible component invariants in the imaginary and real quadratic cases.  
By Theorem~\ref{T:Chur} \eqref{i:comp} and Proposition~\ref{P:B}, there are $|H_2(G',c)[q-1]|$ of these component invariants in either case.  If we restrict to $q$ such that $|H_2(G',c)[q-1]|$ is fixed, then we have
$$
\lim_{\substack{q\ra\infty\\(q,|G'|)=1\\ |H_2(G',c)[q-1]|=d }} \tilde{E}^\pm_{\F_q(t),q^n}(G,G') =d,
$$
by summing Theorem~\ref{T:SFF} over all possible component invariants.  Then Theorem~\ref{T:FF} follows by dividing both sides by $d$, and the translation between $\tilde{E}$ and $E$.
\end{proof}

\begin{remark}
When, as in Theorem ~\ref{T:SFF}, we take a limit as $q\ra\infty$, either before or after a limit in $n$, we lose constants that go to $1$ as $q\ra\infty$, like zeta values.  When $G$ is abelian, we already expected there are no such factors in the moments from the Cohen-Lenstra heuristics.  Additionally, when $G$ is abelian,  the Cohen-Lenstra-Martinet heuristics predict that the moment does not  depend on $q$  when $(q-1,|G'|)$ is fixed (as that fixes the relevant roots of unity in the base field $\F_q(t)$). 
\end{remark}

\begin{proof}[Proof of Theorem~\ref{T:SFF}]

We let $Q=\F_q(t)$ and $Q_\infty=\F_q((t^{-1}))$, We have that $q$ is odd since $|G'|$ is even. 
Let $c$ be the union of conjugacy classes $c_i$ of $G'$ of order two elements with non trivial image in $S_2$ and  let $(1,1,\sigma)\in G'$ be in $c_1$.

We first prove the first statement in the imaginary quadratic case. 
We wish to understand the average
  $$
\frac{\sum_{K\in IQ_{=q^{2n}}}
   \#\{\rho\in \Sur(G_K^{\un,\v}, G) | \rho \textrm{ is type }G',\textrm{ component invariant $f_q$}\}    }{\sum_{K\in IQ_{=q^{2n}}} 1}.
    $$
Let $\phi$ be the  infinity type  sending $F_\Delta$ to $1$ and inertia to $(1,1,\sigma)$.  
By Theorem~\ref{T:Chur}~\eqref{i:bij}, we have a bijection between 
$\CCHur^{c,(1,1,\sigma)}_{G',\underline{n}}(\F_q)$ and 
isomorphism classes of  marked $(G',c)$-extensions $M/\F_q(t)$ of infinity type $\phi$
with the total degree of  non-infinite places of $\F_q(t)$ with inertia in $c_i$ is $n_i$ for all $i\geq 1$.
Note that infinity type $\phi$ is exactly what makes the associated quadratic extension imaginary quadratic (this was essentially the definition of imaginary quadratic).
In particular these extensions have  $\Nm \Disc (M)=q^{( \sum_i n_i+1)|G'|/2},$ and thus associated quadratic extension with discriminant norm $q^{ \sum_i n_i+1},$
Moreover, the $\F_q$ points in the component that is associated to $f_q$ under Theorem~\ref{T:Chur} \eqref{i:comp} correspond exactly to those $\rho$ with $G_{\F_q(t)}\ra G'$ with component invariant $f_q$.
By Proposition~\ref{P:trans} and Theorem~\ref{T:count}, we then conclude that if $c$ is a single conjugacy class that
\begin{align*}
\#\{(K,\rho) | K\in IQ_{=q^{2n}}, \rho\in \Sur(G_K^{\un,\v}, G) | \rho \textrm{ is type }G'  ,\textrm{ component invariant $f_q$}\}
 =& q^{2n-1}+O_n(q^{2n-3/2}).
\end{align*}
Here the $O_n$ notation indicates that the implied constant may depend on $n$.
An imaginary quadratic extension in $IQ_{=q^{2n}}$ can be given uniquely as $\F_q(t)(\sqrt{f(t)})$ for a monic, square-free degree $2n-1$ polynomial $f\in\F_q[t]$, and thus  $\#IQ_{=q^{2n}}=q^{2n-1}-q^{2n-2}$ since it is classical that there are $q^{2n-1}-q^{2n-2}$ such $f(t)$. 
Thus in this case, when $n$ is sufficiently large, we have
\begin{align*}
&\frac{\sum_{K\in IQ_{=q^{2n}}} \# \{\rho | \rho\in \Sur(G_K^{\un,\v}, G) | \rho \textrm{ is type }G' ,\textrm{ component invariant $f_q$}\}}
{\#IQ_{=q^{2n}}
}\\
=&\frac{ q^{2n-1} +O_n(q^{2n-3/2})}{q^{2n-1}-q^{2n-2}}=1+O_n(q^{-1/2}).
\end{align*}
The  $G'$ good part of the theorem follows for imaginary quadratic extensions.

Now consider the case when the  set of conjugacy classes $c/G'$ in $c$ has more than $1$ element.
We have
\begin{align*}
\#\{(K,\rho) | K\in IQ_{=q^{2n}}, \rho\in \Sur(G_K^{\un,\v}, G) | \rho \textrm{ is type }G'  \}
=  \sum_{\substack{\underline{n} \\ \sum_i n_i =2n-1}} {\#\CCHur^{c,(1,1,\sigma)}_{G,\underline{n}}(\F_q)}.
\end{align*}
There is a  constant $v_{G'}$ such that for $n$ sufficiently large, there are at least $v_{G'} n^{\# c/G'-1}$ choices  $\underline{n}$ with $\sum_i n_i=2n-1$ such that all the $n_i$ are large enough for the application of $\ref{T:Chur} \eqref{i:comp}$, and such that  the $n_i$ are even for $i>1$ and $n_1$ is odd.  Let  $\mathcal{C}$ denote the set of such choices of $\underline{n}$. 
So for  $\underline{n}\in \mathcal{C}$, we have the number of $\Frob$ fixed components of $\CCHur^{c,(1,1,\sigma)}_{G,\underline{n}}$
is $|H_2(G',c)[q-1]|$ by Theorem~\ref{T:Chur} \eqref{i:comp} and Proposition~\ref{P:B}.
 Then we apply Theorem~\ref{T:count}, and we have
\begin{align*}
 \sum_{\substack{\underline{n} \\ \sum_i n_i =2n-1}} {\#\CCHur^{c,(1,1,\sigma)}_{G,\underline{n}}(\F_q)}& \geq
\sum_{\underline{n} \in \mathcal{C}} {|H_2(G,c)[q-1]|q^n+O_n(q^{n-1/2})}\\
& \geq
v_{G'} n^{\# c/G'-1}|H_2(G',c)[q-1]|q^n+O_n(q^{n-1/2}).
\end{align*}
Thus 
\begin{align*}
\tilde{E}^{-}_{\F_q(t),q^{2n}}(G,G')=&
\frac{\sum_{K\in IQ_{=q^{2n}}} \# \{\rho | \rho\in \Sur(G_K^{\un,\v}, G) | \rho \textrm{ is type }G'\}}
{\#IQ_{=q^{2n}}
}
\\
\geq &\frac{v_{G'} n^{\# c/G'-1}|H_2(G',c)[q-1]|q^{2n-1}+O_n(q^{2n-3/2})}{q^{2n-1}-q^{2n-2}}.
\end{align*}
Dividing both sides by $|H_2(G',c)[q-1]|$, and then letting $q\ra\infty$, the second part of the theorem follows for imaginary quadratic extensions.
 
 Now we consider the real quadratic case. We are interested in the average
 $$
\frac{\sum_{K\in RQ_{=q^{2n}}}  \#\{(\rho,y) | \rho\in \Sur(G_K^{\un,\v}, G), y\textrm{ \twist for }\rho, (\rho,y) \textrm{ is type }G' ,\textrm{ component invariant $f_q$}\}      }{\sum_{K\in RQ_{=q^{2n}}} 1}.
    $$
By Theorem~\ref{T:Chur}~\eqref{i:bij}, we have a bijection between 
$\CCHur^{c,1}_{G',\underline{n}}(\F_q)$ and 
isomorphism classes of  marked $(G',c)$-extensions $M/\F_q(t)$ of trivial infinity type
with the total degree of  non-infinite places of $\F_q(t)$ with inertia in $c_i$ is $n_i$ for all $i\geq 1$.
Note that trivial infinity type $\phi$ is exactly what makes the associated quadratic extension real quadratic (this was  the definition of real quadratic).
In particular these extensions have  $\Nm \Disc (M)=q^{( \sum_i n_i+1)|G'|/2},$ and thus associated quadratic extension with discriminant norm $q^{ \sum_i n_i+1},$
Moreover, the $\F_q$ points in the component that is associated to $f_q$ under Theorem~\ref{T:Chur} \eqref{i:comp} correspond exactly to those $\rho$ with $G_{\F_q(t)}\ra G'$ with component invariant $f_q$.
By Proposition~\ref{P:trans} and Theorem~\ref{T:count}, we then conclude that if $c$ is a single conjugacy class that
\begin{align*}
&\sum_{K\in RQ_{=q^{2n}}}  \#\{(\rho,y) | \rho\in \Sur(G_K^{\un,\v}, G), y\textrm{ \twist for }\rho, (\rho,y) \textrm{ is type }G' ,\textrm{ component invariant $f_q$}\}   \\ 
 =& q^{2n}+O_n(q^{2n-1/2}).
\end{align*}
Here the $O_n$ notation indicates that the implied constant may depend on $n$.
A real quadratic extension in $RQ_{=q^{2n}}$ can be given uniquely as $\F_q(t)(\sqrt{f(t)})$ for a monic, square-free degree $2n$ polynomial $f\in\F_q[t]$, and thus  $\#RQ_{=q^{2n}}=q^{2n}-q^{2n-1}$ since it is classical that there are $q^{2n}-q^{2n-1}$ such $f(t)$. 
Thus in this case, when $n$ is sufficiently large, we have
\begin{align*}
&\frac{\sum_{K\in RQ_{=q^{2n}}}  \#\{(\rho,y) | \rho\in \Sur(G_K^{\un,\v}, G), y\textrm{ \twist for }\rho, (\rho,y) \textrm{ is type }G' ,\textrm{ component invariant $f_q$}\}      }{\sum_{K\in RQ_{=q^{2n}}} 1}\\
=&\frac{ q^{2n} +O_n(q^{2n-1/2})}{q^{2n}-q^{2n-1}}=1+O_n(q^{-1/2}).
\end{align*}
The  $G'$ good part of the theorem follows for real quadratic extensions.

If $c$ is more than one conjugacy class, the $G'$ not good real quadratic case follows with an analogous argument to that in the imaginary quadratic case.
\end{proof}

\section{A refined conjecture}\label{S:Refine}
In light of the lifting invariant defined in Section~\ref{S:lift} and the statement of Theorem~\ref{T:SFF}, it is natural to conjecture that the average number of unramified $G$ extensions of quadratic fields of type $G'$ is $1$ per lifting invariant (counting in the rigidified $\tilde{E}$ sense).  However, we have only defined the lifting invariant for tame extensions of global fields.  Therefore, at this point we can only make a well-posed conjecture if we restrict to tamely ramified quadratic fields.  This is a local condition at $2$ on the quadratic fields.  It is predicted that such local conditions do not affect the Cohen-Lenstra heuristics and in particular the conjectured averages in the case that $G$ is abelian (see \cite{Bhargava2015d,Bhargava2016,Wood2017b}).  

Recall from Section~\ref{S:Edefs} that for a quadratic extension $K/Q$, a $\rho\in \Sur(G_K^{\un,\v}, G)$ (with a twist $y$ in the real quadratic case) gives a map $\phi:G_Q\ra G'$, and in Section~\ref{S:defI} we have defined a lifting invariant $I_{G',\pi,Q}(\phi,u)$, where $\pi$ is the map $G'\ra S_2$ and $u\in \mu_{Q(\mu_{4|\widetilde{G'}_c|})}$.  By an abuse of notation, we write 
$I_{G',\pi,Q}(\rho,u):=I_{G',\pi,Q}(\phi,u)$ in the imaginary case and $I_{G',\pi,Q}((\rho,y),u):=I_{G',\pi,Q}(\phi,u)$ in the real case (we can technically tell which form of the notation we mean by checking the first subscript of the $I$ against the range of the first argument).  
We write $IQ_X^t$ and $RQ_X^t$ for the imaginary and real quadratic tame extensions of $Q$, respectively, with discriminant norm at most $X$.
We can now state precisely a version of Conjecture~\ref{C:E} stratified by lifting invariant.

\begin{conjecture}
Let $Q$ be $\Q$ or $\F_q(t)$.  
Consider a finite group $G$ and an \sa subgroup $G'\sub G\wr S_2$ (with surjection $\pi:G' \ra S_2$),
and $c$ the set of order $2$ elements of $G' \setminus \ker\pi$. 
Let $h\in H_2(G',c){|\mu_{Q}|}$.  Let $u\in \mu_{Q(\mu_{4|\widetilde{G'}_c|})}$.
 If $G'$ is good,  then
   $$
\lim_{X\ra\infty} \frac{\sum_{K\in IQ^t_{X}}
   \#\{\rho\in \Sur(G_K^{\un,\v}, G) | \rho \textrm{ is type }G', I_{G',\pi,Q}(\rho,u)=h\}    }{\sum_{K\in IQ^t_{X}} 1} =1,
    $$ 
 and  
 $$
\lim_{X\ra\infty} \frac{\sum_{K\in RQ^t_{X}}  \#\{(\rho,y) | \rho\in \Sur(G_K^{\un,\v}, G), y\textrm{ \twist for }\rho, (\rho,y) \textrm{  type }G', I_{G',\pi,Q}((\rho,y),u)=h\}      }{\sum_{K\in RQ^t_{X}} 1}=1.$$
\end{conjecture}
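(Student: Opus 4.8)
The plan is to establish the conjecture first over $Q=\F_q(t)$, using the Hurwitz-space machinery of Section~\ref{S:FF}, and to treat $Q=\Q$ separately, since there no moduli space is available. Over $\F_q(t)$, the $q\to\infty$ form of the statement is essentially already in hand: Theorem~\ref{T:same} identifies the lifting invariant $I_{G',\pi,\F_q(t)}(\rho,u)$ (with a twist adjoined in the real case) with the Ellenberg--Venkatesh--Westerland component invariant $f\in\widetilde{G'}\langle -1 \rangle$ of the homomorphism $G_{\F_q(t)}\ra G'$ attached to $\rho$, up to explicit factors determined by the discriminant norm and by the infinity type separating the imaginary from the real quadratic case. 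Hence, for $G'$ of trivial center, fixing the value $h\in H_2(G',c)[q-1]$ of $I_{G',\pi,\F_q(t)}(\cdot,u)$ amounts, after bookkeeping, to fixing one of the $\Frob$-stable components of the relevant $\CCHur^{c,g}_{G',\underline{n}}$ supplied by Theorem~\ref{T:Chur}~\eqref{i:comp} and Proposition~\ref{P:B}; Theorem~\ref{T:SFF} then gives exactly the asserted limit $1$ in the regime where $n$ is fixed and $q\to\infty$.

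\textbf{From $q\to\infty$ to a limit in $X$ over $\F_q(t)$.} To obtain the conjecture as literally stated over $\F_q(t)$ (fixed $q$, limit as $X\to\infty$), I would keep the geometric reduction but replace the $q\to\infty$ step by two cohomological inputs. By the elementary reduction of Section~\ref{S:Edefs} it suffices to sum the count for each discriminant norm $q^{2m}$ with $m\le N$ and let $N\to\infty$; since $(q,|G'|)=1$ forces $q$ odd, every quadratic extension of $\F_q(t)$ is tame, so the tameness restriction is vacuous. For $G'$ good, $c$ is a single conjugacy class, so by Proposition~\ref{P:trans} the stratified count for norm $q^{2m}$ equals $\#C_m(\F_q)$ for the unique $\Frob$-stable component $C_m\subset\CCHur^{c,g}_{G',\underline{n}}$ (with $\sum_i n_i=2m-\epsilon$) whose component invariant corresponds to $h$ --- there being one such $C_m$ per $h\in H_2(G',c)[q-1]$ once $m$ is large, by Theorem~\ref{T:Chur}~\eqref{i:comp} and Proposition~\ref{P:B}. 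The first input is homological stability for these Hurwitz spaces \cite{Ellenberg2016}, which bounds the Betti numbers of the complex points uniformly in $m$; via Theorem~\ref{T:count} this already gives the ratio as $1+O(q^{-1/2})$, but that is a fixed quantity, not $1$. The second, decisive, input is that the \emph{stable} rational cohomology of $C_m$ coincides with that of the configuration space $\Conf^m$ it covers (so $H^0=\Q$, $H^1=\Q(-1)$, and no higher cohomology), whence Grothendieck--Lefschetz gives $\#C_m(\F_q)=q^{2m-\epsilon}-q^{2m-\epsilon-1}$ for all large $m$, matching the count of tame quadratic fields of that discriminant norm term by term; summing and dividing then yields the limit $1$. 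I expect this second input to be the main obstacle: it is a strong vanishing statement for stable Hurwitz cohomology, open as far as I know even for abelian $G$ of odd order, and it is precisely the ``zeta value'' phenomenon alluded to in the remark following the Proof of Theorem~\ref{T:FF}.

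\textbf{The number field case.} Over $\Q$ there is no Hurwitz space and no analogue of Theorem~\ref{T:SFF}, so the conjecture is genuinely open there; this is the principal obstacle overall. The only plausible route is a direct count, by geometry of numbers or analytic methods, of unramified $G$-extensions of tame quadratic fields stratified by the lifting invariant, in the spirit of the Davenport--Heilbronn theorem and of Bhargava's proofs of Conjecture~\ref{C:E} in the known $(G,G')$; since the invariant of Section~\ref{S:defI} is built from id\`ele class groups, fixing $h$ cuts out a union of splitting conditions on the parametrising objects, and one wants an equidistribution statement over these classes. This seems far out of reach in general. A sensible first target is to verify the refinement in the cases where Conjecture~\ref{C:E} is already known --- it is vacuous for $G$ abelian of odd order since then $H_2(G',c)[|\mu_{\Q}|]=1$, and for $(A_4,G')$ and $(A_5,G')$ one would refine Bhargava's counts --- together with the compatibility check that summing the refined conjecture over all $h\in H_2(G',c)[|\mu_{Q}|]$ and dividing by $|\Aut_{G'}(G)|$ recovers Conjecture~\ref{C:E}.
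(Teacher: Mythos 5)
The statement you were given is a \emph{conjecture} (the refined conjecture of Section~\ref{S:Refine}), and the paper does not prove it --- nor could it, since specialised to abelian $G$ of odd order it reduces to the original Cohen--Lenstra moment prediction, open in almost all cases. There is thus no proof in the paper to compare against. What you have written is not a proof but a survey of the evidence and the obstacles, and as such it is essentially accurate.

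On the details: over $\F_q(t)$ in the $q\to\infty$ regime the statement you are refining is exactly Theorem~\ref{T:SFF}, and you correctly identify Theorem~\ref{T:same} as the dictionary between $I_{G',\pi,\F_q(t)}$ and the component invariant of Ellenberg--Venkatesh--Westerland, so that stratifying by $h$ is stratifying by $\Frob$-fixed components, whose count is governed by Theorem~\ref{T:Chur}~\eqref{i:comp} and Proposition~\ref{P:B}. Your diagnosis of what is missing for a fixed-$q$, $X\to\infty$ statement is also on the mark: uniform cohomological bounds only give the ratio as $1+O(q^{-1/2})$, and driving the error to zero for fixed $q$ would require the stable rational cohomology of each relevant component to agree with that of the configuration space it covers --- a strong, open assertion, and precisely the ``no zeta values'' phenomenon the paper alludes to in the remark following the proof of Theorem~\ref{T:FF}. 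Over $\Q$ the whole thing is open, as you say. Two small corrections: the conjecture is not vacuous for abelian $G$ of odd order --- there it collapses to the Cohen--Lenstra prediction, which is a content-full open statement, not a triviality; and because the refinement restricts to \emph{tame} quadratic fields, summing over all $h$ does not literally reproduce Conjecture~\ref{C:E} but only its restriction to tame fields, with agreement of the two resting on the expectation (noted in Section~\ref{S:Refine}) that such local conditions at $2$ do not alter the averages.
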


It would be interesting to have a definition of the lifting invariant that extended to wild extensions so that one could make a conjecture without a restriction to tame quadratic fields.

\section{The Malle-Bhargava principle motivation for Conjecture~\ref{C:E}}\label{S:MB}

In this section, we will apply the Malle-Bhargava principle to suggest for us, in Conjecture~\ref{C:E},  when $E^{\pm}(G,G')$ should be finite versus infinite.

Malle \cite{Malle2004}, based on the local possibilities for the discriminant of a $G$-number field, gave a conjecture for the (global) asymptotics of $G$-fields by discriminant.  (This conjecture has known counterexamples--see below.)
 Bhargava \cite{Bhargava2010} gave a different heuristic, also based on local possibilities for the discriminant, 
 to predict both the count of $G$-fields (in which case the predicted asymptotic agrees with Malle's conjecture) and the count of $G$-fields with local conditions at finitely many primes.
 (Bhargava's principle was stated as a question, since there were known cases in which it fails.)
   We apply this principle to count number fields with given local conditions at infinitely many primes.  While the  version we apply is not discussed by Malle or Bhargava, it gives provably correct predictions in some cases that are closely related to our situation including the Cohen-Lenstra-Martinet applications to the $3$-part of the class group of quadratic fields \cite{Davenport1971} and the $2$-part of the class group of cubic fields \cite{Bhargava2005}.
 See \cite[Section 10]{Wood2016} for a detailed introduction to  the Malle-Bhargava principle, which comes from heuristics presented in
\cite[Section 4]{Malle2004} and \cite[Section 8]{Bhargava2010}.

For simplicity, we work here only over $Q=\Q$. 
Given a finite group $G$, and a type $G'$, let $c$ be the set of order $2$ elements of $G'$ that are not in $G\times G$.
Note that $c\cup 1$ is the set of elements of $G'$ that generate a cyclic subgroup intersecting trivially with $G\times G$.
 The numerator $$\sum_{K\in IQ_X}  \#\{\rho\in \Sur(G_K^{\un,\v}, G) | \rho \textrm{ is type }G'\}    $$ in the definition of $E^-(G,G')$ equivalently counts elements  $\psi\in\Sur(\Gal(\bar{\Q}/\Q), G')$ such that for each finite place $v$ the restriction
$\psi_v : \Gal(\bar{\Q}_v/\bar{\Q}_v)\ra G'$ has inertia subgroup a subset of $c\cup\{1\}$, and further the image of $\psi_\infty:\Gal(\C/\R)\ra G'$ is the subgroup generated by $(1,\sigma)$.
 The Malle-Bhargava principle models the count of such $\rho$ by the following Dirichlet series
$$
D(s)=\sum_{n\geq 1} d_n n^{-s}=\left( \frac{1}{|G'|}\sum_{\substack{\psi_\infty : \Gal(\bar{\C}/\bar{\R})\ra G'\\
\im(\phi_\infty)=\langle (1,\sigma) \rangle
}}
(\Disc \psi_\infty)^{-s}\right)
\prod_{v \textrm{ finite}} \left( \frac{1}{|G'|}\sum_{\substack{\psi_v : \Gal(\bar{\Q}_v/\bar{\Q}_v)\ra G'\\
\psi_v(I_v)\cap G\times G=1
}}
(\Disc \psi_v)^{-s}\right)
,
$$
where the product is over all finite places $v$ of $\Q$, and $I_v$ is the inertia subgroup of $\Gal(\bar{\Q}_v/\bar{\Q}_v)$,
and $\Disc \psi_v$ is the norm of the discriminant of the quadratic \'etale algebra associated to the composite of 
$\psi_v$ and $G'\ra S_2$. 
(The factor for $\infty$ on the left is equal to $|G'|^{-1}$, as there is $1$ choice of $\psi_\infty$ such that $\im(\phi_\infty)=\langle (1,\sigma) \rangle$ and $\Disc \psi_\infty=1$ for any $\psi_\infty$.) 
 The idea is that the sums are over over the possible local restrictions $\psi_v$ of the $\psi$ we wish to count, and if we assume some sort of average local-global compatibility, we would conjecture that
$$
\sum_{K\in IQ_X}  \#\{\rho\in \Sur(G_K^{\un,\v}, G) | \rho \textrm{ is type }G'\}    
$$ 
has the same asymptotics as $\sum_{n\leq X}d_n$ up to a constant factor.
Since in this section we are only going to predict asymptotics up to a constant factor, we can ignore all places $v$ with $v\mid |G'|$ (which includes $v=2$) or $v$ infinite.  If we write $G^t_{\Q_v}$ for the Galois group of a maximal tame algebraic extension of $\Q_v$, then $G^t_{\Q_v}$ is isomorphic to the profinite completion of the free group on $x,y$ modulo the relation $xyx^{-1}=y^{\Nm v}$ \cite[Theorem 7.5.2]{Neukirch2000}, and the inertia subgroup is topologically generated by $y$.  
Thus, the $\psi_v : \Gal(\bar{\Q}_v/\bar{\Q}_v)\ra G'$ such that $\psi_v(I_v)\cap G\times G=1$ are given exactly by choices of an order $2$ element $y\in G'\setminus G\times G$, and an $x$ such that $xyx^{-1}=y$ (since $y$ is order $2$ and $v\ne 2$).  For each $y$, the number of choices of $x$ is $|G'|/\#\{zyz^{-1} | z\in G'\}$ by the orbit-stabilizer theorem.   Thus,
$$
\frac{1}{|G'|}\sum_{\substack{\psi_v : \Gal(\bar{\Q}_v/\bar{\Q}_v)\ra G'\\
\psi_v(I_v)\cap G\times G=1
}}
(\Disc \psi_v)^{-s} =1+N_{G'}v^{-s}
$$
where $N_{G'}$ is the number of $G'$ conjugacy classes of order $2$ elements in $G'\setminus (G\times G)$. 
It follows that for some constant $K_{G'}$,
$$
\sum_{n\leq X}d_n =K_{G'} X(\log X)^{N_{G'}-1}+o(X(\log X)^{N_{G'}-1}).
$$
Since $\sum_{K\in IQ_X} 1= {\frac{1}{2\zeta(2)}}X+o(X),$ it follows that the Malle-Bhargava principle predicts the division in Conjecture~\ref{C:E} into finite versus infinite averages.  Moreover, it predicts that when $G'$ is not good that
$$
\frac{\sum_{K\in IQ_X}  \#\{\rho\in \Sur(G_K^{\un,\v}, G) | \rho \textrm{ is type }G'\} }{\sum_{K\in IQ_X} 1}
$$
is asymptotic to $(\log X)^{N_{G'}-1}$, in agreement with the lower bound we obtain in Theorem~\ref{T:FF}. 
We thus ask about the following refinement of Conjecture~\ref{C:E}.
\begin{question}\label{Q:nc}
When $G$ is a finite group and $G'$ and admissible subgroup of $G\wr S_2$,  are there constants $c^{\pm}_{G,G'}>0$ such that 
$$
\lim_{X\ra\infty}
\frac{E^{\pm}_X(G,G')}{c^\pm_{G,G'}(\log X)^{N_{G'}-1} }=1?
$$
\end{question}
Theorem~\ref{T:FF} provides function field evidence for an affirmative answer to the lower bound of Question~\ref{Q:nc}.  In Section~\ref{S:Ex}, we prove an affirmative answer to Question~\ref{Q:nc} in the case $(G,G')=(C_2^k, C_2^{k+1})$.  See also \cite{Alberts2016a,Klys2017} for further evidence that the answer to 
Question~\ref{Q:nc} is yes. The argument for Theorem~\ref{T:FF} could be used to suggest a value for $c^\pm_{G,G'}$ in the function field case, but it would not be clear how to make the analogous constant in the number field case.

\subsection{True and false predictions}
The Malle-Bhargava principle over $\Q$, which in particular implies Malle's Conjecture \cite[Conjecture 1.1]{Malle2004}, makes many predictions that are known to be true, including asymptotics of $S_3$ cubic \cite{Davenport1971}, $S_4$ quartic \cite{Bhargava2005}, $S_5$ quintic \cite{Bhargava2010a}, $S_3$-sextic \cite{Bhargava2008,Belabas2010a}, and elementary abelian \cite{Wright1989, Wood2010} extensions with local conditions at any finite set of places.
The principle gives the correct order of magnitude for the count of $D_4$ quartic fields \cite{Cohen2002} and all abelian extensions \cite{Maki1985, Wright1989} (except for local conditions that are known to never be satisfied).  Moreover, the principle used with infinitely many local conditions makes a provably correct prediction
for cubic fields corresponding, via class field theory, to the $3$-part of the class group of quadratic fields \cite{Davenport1971} and quartic fields corresponding to the $2$-part of the class group of cubic fields \cite{Bhargava2005}.  The application to  the $3$-part of the class group of quadratic fields is a special case of our application.  More generally, the principle also makes correct predictions with infinitely many local conditions for quadratic, $S_3$ cubic, $S_4$ quartic,  $S_5$ quintic fields \cite{Bhargava2014b}, and abelian  \cite{Frei2015} fields with local conditions at infinitely many primes such that for sufficiently large primes all unramified and minimally ramified local extensions are allowed (in the abelian case excepting local conditions that are never satisfied; see also \cite{Bhargava2013a,Taniguchi2013} in the cubic case). 

  However, the Malle-Bhargava principle has counterexamples, including Kl\"uners' \cite{Kluners2005} counterexample to Malle's conjecture for $C_{\ell} \wr S_2$ fields.  
In these examples, $G$ extensions with a particular fixed intermediate extension have the same asymptotic growth rate as the conjecture predicts for all $G$ extensions.  Kl\"uners' \cite{Kluners2005} and T\"urkelli \cite{Turkelli2015} both suggest ways to correct Malle's conjecture, and in particular suggest that it should still hold if we do not count  extensions with nontrivial subfields that are subfields of certain cyclotomic fields.  First, we note that the only \sa subgroup of $C_{\ell} \wr S_2$ is $D_\ell \sub C_{\ell} \wr S_2$ (with $C_\ell \ra C_\ell \times C_\ell$ as $a\mapsto (a,-a)$), so have   not above applied the Malle-Bhargava principle in the cases of known counter-examples.    The following lemma will help us see more generally that the source of these counter-examples is not an issue in our case.
    
  \begin{lemma}\label{L:goodab}
  If $G'$ is a \gd subgroup of $G\wr S_2$, then the abelianization of $G'$ is $S_2$.
  \end{lemma}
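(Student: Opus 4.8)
The plan is to read off the abelianization directly from the generation hypotheses. Since $S_2$ is abelian, the surjection $\pi\colon G'\ra S_2$ factors through the abelianization, so $(G')^{ab}$ surjects onto $S_2\cong\Z/2\Z$; it therefore suffices to prove that $(G')^{ab}$ has order at most $2$.

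First I would record that $c$, the set of order $2$ elements of $G'\setminus\ker\pi$, is nonempty and in fact generates $G'$: indeed $(1,\sigma)\in G'$ by admissibility, $(1,\sigma)^2=1$, and $(1,\sigma)\notin\ker\pi$, so $(1,\sigma)\in c$; and admissibility says precisely that $G'$ is generated by its order $2$ elements with nontrivial image in $S_2$, i.e. by $c$. Now invoke the ``good'' hypothesis: $c$ is a single conjugacy class of $G'$. Passing to the quotient map $\rho\colon G'\ra(G')^{ab}$, conjugate elements have equal image, so $\rho(c)$ consists of the single element $\bar t:=\rho(t)$ for any $t\in c$. Hence $(G')^{ab}=\langle\rho(c)\rangle=\langle\bar t\rangle$ is cyclic, and since $t^2=1$ we get $\bar t^2=1$, so $(G')^{ab}$ has order dividing $2$. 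Combined with the surjection onto $S_2$ from the first paragraph, $(G')^{ab}\cong\Z/2\Z\cong S_2$.

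There is no genuine obstacle in this argument; it is immediate once one unwinds the definitions. The one point to state carefully is the use of the ``good'' hypothesis: it is exactly what guarantees that the generating set $c$ maps to a \emph{single} element in the abelianization, which is what forces $(G')^{ab}$ to be cyclic of order $2$ rather than merely a group generated by involutions surjecting onto $S_2$.
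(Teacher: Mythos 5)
Your proof is correct and follows essentially the same route as the paper's: admissibility guarantees $c$ generates $G'$, goodness says $c$ is a single conjugacy class so its image in $(G')^{ab}$ is a single involution, hence $(G')^{ab}$ is cyclic of order at most $2$, and the surjection $\pi$ onto $S_2$ pins it down to exactly $S_2$. The only cosmetic difference is that the paper writes out the order-two elements of $G'\setminus\ker\pi$ explicitly as pairs $(g,\sigma)$ with $\sigma(g)=g^{-1}$, whereas you argue directly from $c$ without unpacking the wreath-product coordinates; both are fine.
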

  \begin{proof}
  Let $N$ be the set of $g\in G' \cap G\times G$ such that $\sigma(g)=g^{-1}.$  Then $\{(g,\sigma) | g\in N \}$ are all order $2$ elements of $G'$ not in $\ker(G'\ra S_2)$ and so are all conjugate by the definition of a \gd subgroup.  Moreover, any order $2$ element  $G'$ not in $\ker(G'\ra S_2)$ is of the form $(g,\sigma)$ for some $g\in G\times G$ such that $\sigma(g)=g^{-1}.$  Thus by the definition of \sa, the elements $\{(g,\sigma) | g\in N \}$ generate $G'$ and thus $(G')^{ab}$.  Since these elements are all conjugate, it follows $(G')^{ab}$ is generated by $(1,\sigma)$, which has order $2$.  Since by construction, there is a surjection $G'\ra S_2$, the lemma follows.
    \end{proof}
  Thus in $G'$-extensions of $\Q$ for good $G'$, the only intermediate cyclotomic field is the quadratic field $K$ corresponding to $\ker(G'\ra S_2)$.
  Since the $G'$-extensions we count are all unramified over the quadratic field $K$ above, a single $K$ only contributes finitely many extensions to the count, and there are no counter-examples of the type found in \cite{Kluners2005}, and the suggested corrections to Malle's conjecture would not change it in this case.  When $G'$ is not good, any of the suggested corrections to Malle's conjecture would only increase the number of extensions and not affect our Conjecture~\ref{C:E}.
    
Moreover, the Malle-Bhargava principle sometimes makes incorrect predictions for the counts of fields with given local conditions.  
When $G$ is abelian, there are certain local conditions that are never satisfied by global extensions (like Wang's counterexample to Grunwald's ``theorem,'' see \cite{Wright1989, Wood2010}). Also when $G$ is abelian, sometimes local conditions that are sometimes satisfied globally still do not occur with the probabilities that the principle predicts (see \cite[Propositions 1.4 and 1.5]{Wood2010}).   However, for abelian $G$ extensions with finitely many local conditions (or infinitely many local conditions such that for sufficiently large primes all unramified and minimally ramified local extensions are allowed) as long as there are any such extensions at all, the count is still the correct order of magnitude \cite{Wright1989, Frei2015}, and the order of magnitude is all we use here.

\section{Example: $G=C_2^k$ and $G'=C_2^{k+1}$}\label{S:Ex}

In this section, we prove that $E^{\pm}(G,G')=\infty$ when $G=C_2^k$ and $G'=C_2^{k+1}$ for $k\geq 1$.  This $G'$ is realized in $G\wr C_2=(G\times G)\rtimes C_2$ as elements of the form $(a,a,b)$.  Finding $E^{\pm}(G,G')$ amounts to counting $C_2^{k+1}$ extensions of $\Q$ whose inertia has trivial intersection with $C_2^k \times 1 \sub C_2^{k+1}$.  The strategy for counting abelian extensions goes back to Cohn \cite{Cohn1954}, and has been more recently developed in \cite{Wright1989,Wood2010,Frei2015}.

By class field theory, $C_2^{k+1}$ extensions of $\Q$ correspond to continuous homomorphisms from the id\`ele class group $J_\Q$ to $C_2^{k+1}$.  It is an easy exercise to see that the restriction map
\begin{align*}
\Hom_{cts}(J_\Q, C_2^{k+1}) &\ra \Hom_{cts}({\prod_p}' \Z_p^*, C_2^{k+1}) \\
\phi &\mapsto \prod_p \phi|_{\Z_p^*}
\end{align*}
(where the restricted product is over finite primes $p$ of $\Q$)
is a bijection.
Note that $\phi$ corresponds to an extension counted by the numerator of $E(G,G')$ if and only if 
at all finite primes $p$ we have $\phi(\Z_p^*)\cap (C_2^k \times 1 )=1$
and also $\phi(\R^*_{< 0})\cap (C_2^k \times 1 )=1$ .  This condition at a finite prime $p$ happens exactly when
$\phi(\Z_p^*)=1$ or $\phi(\Z_p^*)$ is  order $2$ and generated by an element of $C_2^{k+1}$ that is non-trivial in the last factor.
The condition at the infinite place happens if and only if $\prod_p  \phi|_{\Z_p^*}(-1)=1$ (in which case the extension is totally real) or if $\prod_p  \phi|_{\Z_p^*}(-1)$ is non-trivial in the last factor of $C_2$ (in which case the extension has a complex quadratic subfield corresponding to the quotient to the last factor of $C_2$).

Let $a_n$ be the number of surjections from $\Gal(\bar{\Q}/\Q)$ to $C_2^{k+1}$ such that at all finite primes $p$ we have $\phi(\Z_p^*)\cap (C_2^k \times 1 )=1$ and $\prod_p  \phi|_{\Z_p^*}(-1)=1$, such that the associated quadratic field (from the composition of the surjection with projection onto the last factor of $C_2^{k+1}$) has absolute discriminant $n$.  Then by Proposition~\ref{P:trans} and the above,
$$
\sum_{K\in RQ_{=n}}  \#\{(\rho,y) | \rho\in \Sur(G_K^{\un,\v}, G), y\textrm{ \twist for }\rho , (\rho,y) \textrm{ is type }G'\}=a_n.
$$
For a $\phi\in \Hom_{cts}(\Z_p^*,C_2^{k+1})$, let $\bar{\phi}$ be the composition of $\phi$ with the projection onto the last factor of $C_2$.
From the above we see that
$$
\prod_p \left(\sum_{\substack{\phi\in \Hom(\Z_p^*,C_2^{k+1})\\
\phi(\Z_p^*)\cap (C_2^k \times 1 )=1}} \Disc(\bar{\phi})^{-s}\right)
$$
has coefficients counting homomorphisms from $\Gal(\bar{\Q}/\Q)$ to $C_2^{k+1}$ such that at all finite primes $p$ we have $\phi(\Z_p^*)\cap (C_2^k \times 1 )=1$.
We consider 
$$
F_k(s)=2^{-(k+1)}\sum_{\chi:C_2^{k+1}\ra \pm 1}\prod_p \left(\sum_{\substack{\phi\in \Hom(\Z_p^*,C_2^{k+1})\\
\phi(\Z_p^*)\cap (C_2^k \times 1 )=1}} \chi(\phi(-1))\Disc(\bar{\phi})^{-s} \right),
$$
whose coefficients count homomorphisms from $\Gal(\bar{\Q}/\Q)$ to $C_2^{k+1}$ such that at all finite primes $p$ we have $\phi(\Z_p^*)\cap (C_2^k \times 1 )=1$ and $\prod_p  \phi|_{\Z_p^*}(-1)=1$ 
At all odd primes $p$, if $\chi=1$ we have
$$
\sum_{\substack{\phi\in \Hom(\Z_p^*,C_2^{k+1})\\
\phi(\Z_p^*)\cap (C_2^k \times 1 )=1}} \chi(\phi(-1))\Disc(\bar{\phi})^{-s}=1+2^kp^{-s}.
$$
At all odd primes $p$ such that $-1$ is a square mod $p$, for all $\chi$ we have 
$$
\sum_{\substack{\phi\in \Hom(\Z_p^*,C_2^{k+1})\\
\phi(\Z_p^*)\cap (C_2^k \times 1 )=1}} \chi(\phi(-1))\Disc(\bar{\phi})^{-s}=1+2^kp^{-s}.
$$
At all odd primes $p$ such that $-1$ is not square mod $p$,  if $\ker \chi=C_2^k \times 1$, then
$$
\sum_{\substack{\phi\in \Hom(\Z_p^*,C_2^{k+1})\\
\phi(\Z_p^*)\cap (C_2^k \times 1 )=1}} \chi(\phi(-1))\Disc(\bar{\phi})^{-s}=1-2^kp^{-s}.
$$
and for any other $\chi\neq 1$ we have
$$
\sum_{\substack{\phi\in \Hom(\Z_p^*,C_2^{k+1})\\
\phi(\Z_p^*)\cap (C_2^k \times 1 )=1}} \chi(\phi(-1))\Disc(\bar{\phi})^{-s}=1+2^{k-1}p^{-s}-2^{k-1}p^{-s}=1.
$$
From this analysis, standard methods (see e.g. \cite{Cohn1954,Wood2010}) show that
$F_k(s)$ is analytic for $\Re(s)>1$ and has a pole of order $2^k$ at $s=1$.  Then from standard Tauberian theorems (e.g. \cite[Corollary, p. 121]{Narkiewicz1983}), we see that the coefficients of $F_k(s)$ have partial sums asymptotic to $c_kX (\log X)^{2^k -1}$.  Note that except for the trivial homomorphism, the homomorphisms counted by the coefficients of $F_k(s)$ have non-trivial image in the last factor of $C_2$, because all allowed inertia has such non-trivial image.  However, some of the homomorphisms counted by $F_k(s)$ will have image that is not all of $C_2^k$.  An upper bound for the number of these, however, can be given in terms of a constant (in terms of $k$) and the asymptotics of the coefficients of $F_{k-1}(s).$
It thus follows, using Proposition~\ref{P:trans} that for some positive constant $c'_k$ we have
$$
\sum_{K\in RQ_X}  \#\{(\rho,y) | \rho\in \Sur(G_K^{\un,\v}, G), y\textrm{ \twist for }\rho , (\rho,y) \textrm{ is type }G'\}  =c'_k X (\log X)^{2^k -1}+o(X (\log X)^{2^k -1})
$$
and so 
$\tilde{E}^+(G,G')=\infty$.
Similarly, we can show that 
$$
\sum_{K\in IQ_X}  \#\{\rho\in \Sur(G_K^{\un,\v}, G) | \rho \textrm{ is type }G'\} =c''_k X (\log X)^{2^k -1}+o(X (\log X)^{2^k -1})
$$
and so 
$\tilde{E}^-(G,G')=\infty$.

\section{Charts of values}\label{S:chart}
In this section, we give two charts of the values of invariants important in this paper for small groups.
The computations for these two charts were done in GAP \cite{GAP4}.
  For groups that we give as semi-direct products, we also give their identifier  in the Small Groups Library \cite{SmallGroups} in square brackets.  We write $D_n$ for the dihedral group of order $n$, and $C_n$ for the cyclic group of order $n$, and $Q_n$ for the quaternion group of order $n$, and $A_d$ for the alternating group on $d$ elements.
\newpage
\subsection{Types}\label{S:types}

The below is a chart of all groups $G$ from order $2$ to $15$ as well as the two smallest non-abelian simple groups, and all \sa $G'\sub G \wr S_2$.
For a given $G$, we list each abstract isomorphism class of $G'$ only once.  Even though in principle $G'$ could sit as a subgroup of 
$G \wr S_2$ in two different ways (not in the same $\Aut(G)$ class) such that $|c/G'|$ had different values, that doesn't occur for groups of these orders.  The types $G'$ that are good are those for which the number $|c/G'|$ of conjugacy classes in $c$ is $1$. See \cite[Sections 2 and 3]{Alberts2016} for discussion of which $G$ have an \sa $G'$ with $|G'|=2|G|$.

\begin{center}
  \begin{tabular}{  c | c | c }
    $G$ & $G'$ & $|c/G'|$ \\ \hline\hline
    $C_2$ & $C_2^2$ & 2 \\ \hline 
    $C_3$ & $S_3$ & 1 \\ \hline 
 $C_4$ & $D_8$ & 2\\ \hline 
$C_2^2$ & $C_2^3$ & 4\\ \hline 
$C_5$ & $D_{10}$ & 1\\ \hline 
$S_3$ & $D_{12}$ & 2\\ \hline 
$S_3$ & $S_3 \times S_3$ & 2\\ \hline 
$C_6$ & $D_{12}$ & 2\\ \hline 
$C_7$ & $D_{14}$ & 1\\ \hline 
$C_8$ & $D_{16}$ & 2\\ \hline 
$C_4 \times C_2$ & $C_2 \times D_8$ & 4\\ \hline 
$D_8$ & $C_2 \times D_8$ & 4\\ \hline 
$D_8$ & $(C_2 \times D_8) \rtimes C_2$  [49] & 4\\ \hline 
$Q_8$ & $(C_4 \times C_2) \rtimes C_2$ [13] & 3\\ \hline 
$Q_8$ & $(C_2 \times Q_8) \rtimes C_2$ [50] & 4\\ \hline 
$C_2^3$ & $C_2^4$ & 8\\ \hline 
$C_9$ & $D_{18}$ & 1\\ \hline 
$C_3^2$ & $(C_3^2) \rtimes C_2$ [4] & 1\\ \hline 
$D_{10}$ & $D_{20}$ & 2\\ \hline 
$D_{10}$ & $D_{10}^2 $ & 2\\ \hline 
$C_{10}$ & $D_{20}$ & 2\\ \hline 
$C_{11}$ & $D_{22}$ & 1\\ \hline 
$C_3 \rtimes C_4$ [1] & $(C_6 \times C_2) \rtimes C_2$ [8] & 2\\ \hline 
$C_3 \rtimes C_4$ [1]  & $(C_6 \times S_3) \rtimes C_2$ [22] & 2\\ \hline 
$C_{12}$ & $D_{24}$ & 2\\ \hline 
$A_4$ & $S_4$ & 1\\ \hline 
$A_4$ & $((C_2^4) \rtimes C_3) \rtimes C_2$ [227] & 1\\ \hline 
$D_{12}$ & $C_2^2 \times S_3$ & 4\\ \hline 
$D_{12}$ & $C_2 \times S_3^2 $ & 4\\ \hline 
$C_6 \times C_2$ & $C_2^2 \times S_3$ & 4\\ \hline 
$C_{13}$ & $D_{26}$ & 1\\ \hline 
$D_{14}$ & $D_{28}$ & 2\\ \hline 
$D_{14}$ & $D_{14}^2 $ & 2\\ \hline 
$C_{14}$ & $D_{28}$ & 2\\ \hline 
$C_{15}$ & $D_{30}$ & 1 \\ \hline
$A_5$ & $S_5$ & 1\\ \hline 
$A_5$ & $A_5\times C_2$ & 2\\ \hline 
$A_5$ & $A_5\wr C_2$ & 1\\ \hline 
$\PSL(3,2)$ & $\PSL(3,2)\rtimes C_2$ [208] & 1\\ \hline 
$\PSL(3,2)$ & $\PSL(3,2)\times C_2$  & 2\\ \hline 
$\PSL(3,2)$ & $\PSL(3,2)\wr C_2$  & 1
  \end{tabular}
\end{center}

\subsection{$H_2(G',c)$}\label{S:Hchart}

The below is a chart of all groups $G$ from order $2$ to $31$ as well as the two smallest  non-abelian simple groups, and all \sa $G'\sub G \wr S_2$ such that 
$|c/G'|=1$.  For these pairs, we give $H_2(G',c)$ and the size of the center $|Z(G')|$.
For a given $G$, we list each abstract isomorphism class of $G'$ only once.  Even though in principle $G'$ could sit as a subgroup of 
$G \wr S_2$ in two different ways (not in the same $\Aut(G)$ class) such that $H_2(G',c)$ or $|Z(G')|$ had different values, that doesn't occur for groups of these orders with $|c/G'|=1$. 

\begin{center}
  \begin{tabular}{  c | c | c |c }
    $G$ & $G'$ & $H_2(G',c)$ & $|Z(G')|$ \\ \hline\hline
$C_3$ & $S_3$ & $1$ & 1\\ \hline 
$C_5$ & $D_{10}$ & $1$ & 1\\ \hline 
$C_7$ & $D_{14}$ & $1$ & 1\\ \hline 
$C_9$ & $D_{18}$ & $1$ & 1\\ \hline 
$C_3^2 $ & $(C_3^2) \rtimes C_2$ [4] & $C_3$ & 1\\ \hline 
$C_{11}$ & $D_{22}$ & $1$ & 1\\ \hline 
$A_4$ & $S_4$ & $1$ & 1\\ \hline 
$A_4$ & $((C_2^4) \rtimes C_3) \rtimes C_2$ [227] & $C_2$ & 1\\ \hline 
$C_{13}$ & $D_{26}$ & $1$ & 1\\ \hline 
$C_{15}$ & $D_{30}$ & $1$ & 1\\ \hline 
$C_{17}$ & $D_{34}$ & $1$ & 1\\ \hline 
$C_{19}$ & $D_{38}$ & $1$ & 1\\ \hline 
$C_7 \rtimes C_3$ [1] & $((C_7^2 ) \rtimes C_3) \rtimes C_2$ [7] & $1$ & 1\\ \hline 
$C_{21}$ & $D_{42}$ & $1$ & 1\\ \hline 
$C_{23}$ & $D_{46}$ & $1$ & 1\\ \hline 
$\SL(2,3)$ & $\GL(2,3)$ & $1$ & 2\\ \hline 
$\SL(2,3)$ & $((Q_8^2) \rtimes C_3) \rtimes C_2$ [18130] & $1$ & 2\\ \hline 
$C_{25}$ & $D_{50}$ & $1$ & 1\\ \hline 
$C_5^2 $ & $(C_5^2) \rtimes C_2$ [4] & $C_5$ & 1\\ \hline 
$C_{27}$ & $D_{54}$ & $1$ & 1\\ \hline 
$C_9 \times C_3$ & $(C_9 \times C_3) \rtimes C_2$ [7] & $C_3$ & 1\\ \hline 
$(C_3^2) \rtimes C_3$ [3] & $((C_3^2) \rtimes C_3) \rtimes C_2$ [8] & $1$ & 3\\ \hline 
$(C_3^2) \rtimes C_3$ [3] & $(C_3 \times ((C_3^2) \rtimes C_3)) \rtimes C_2$ [46] & $C_3^2$ & 3\\ \hline 
$C_9 \rtimes C_3$ [4] & $((C_9 \times C_3) \rtimes C_3) \rtimes C_2$ [17] & $1$ & 3\\ \hline 
$C_3^3$ & $(C_3^3) \rtimes C_2$ [14] & $C_3^3$ & 1\\ \hline 
$C_{29}$ & $D_{58}$ & $1$ & 1\\ \hline 
$C_{31}$ & $D_{62}$ & $1$ & 1\\ \hline
$A_5$ & $S_5$ & 1& 1\\ \hline 
$A_5$ & $A_5\wr C_2$ & $C_2$ & 1\\ \hline 
$\PSL(3,2)$ & $\PSL(3,2)\rtimes C_2$ [208] & 1 & 1\\ \hline 
$\PSL(3,2)$ & $\PSL(3,2)\wr C_2$ & $C_2$  & 1
  \end{tabular}
\end{center}

\subsection*{Acknowledgements} 
I would like to thank Brandon Alberts, Manjul Bhargava, Nigel Boston,  Jordan Ellenberg, and Gunter Malle for helpful conversations and comments on the manuscript.  I would like to thank the anonymous referees for numerous comments that improved the quality of the manuscript.  
Thank you to the American Institute of Mathematics for hosting a workshop on the Cohen-Lenstra heuristics in 2011, at which the question was presented that inspired this research.
This work was done with the support of an American Institute of Mathematics Five-Year Fellowship, a Packard Fellowship for Science and Engineering, a Sloan Research Fellowship,  National Science Foundation grants DMS-1301690 and DMS-1652116, and a Vilas Early Career Investigator Award.

\appendix

\section{Empirical averages of the number of unramified $A_4$ extensions of imaginary quadratic fields}

\begin{center}
{Melanie Matchett Wood and Philip Matchett Wood}
\end{center}

\vspace{12pt}

Theorem~\ref{T:FF} gives evidence for the function field analog of Conjecture~\ref{C:E}, and in this appendix we give empirical data that also supports Conjecture~\ref{C:E}.  
Conjecture~\ref{C:E} says that (for good $G'$) we have $\tilde{E}(G,G')=|H_2(G',c)[2]|$. 
 We have described the term $|H_2(G',c)[2]|$ above as a ``correction'' for the $2$ roots of unity in $2$. 
 Recall from the introduction that whenever $G$ is abelian and of odd order, then $|H_2(G',c)[2]|=1$ and Conjecture~\ref{C:E} is implied by the Cohen-Lenstra heuristics for imaginary quadratic fields.  We can see from the chart in Section~\ref{S:Hchart} that
$|H_2(G',c)[2]|=1$ in many other cases.  Since the factor of $|H_2(G',c)[2]|$ is what is most surprising about Conjecture~\ref{C:E}, we thought it most interesting to look at numerical data for a case when $|H_2(G',c)[2]|\ne 1.$

From the chart in Section~\ref{S:Hchart}, we can see that the smallest $G$ for which we have a non-trivial $|H_2(G',c)[2]|$ is when $G=A_4$, and $G'$ is a certain group of order $96$.  We can describe $G'$ explicitly as the subgroup of $A_4 \wr C_2=(A_4\times A_4)\rtimes C_2$ composed of elements whose two $A_4$ coordinates sum to $0$ in the quotient $A_4\ra C_3$.  In this case $\tilde{E}(A_4,G')$ counts the average number of unramified extensions of imaginary quadratic fields that are Galois over the quadratic field with Galois group $A_4$,
but that are not Galois over $\Q$.  This is because we see from the chart in Section~\ref{S:Hchart} that there are only two possible $G'$, and when $G'=S_4$, we can see from the degrees of the groups that the $A_4$-extension of a quadratic must be Galois over $\Q$.  (We also note that the equality $\tilde{E}(A_4,S_4)=1$ is a theorem of Bhargava \cite[Theorem 1.4]{Bhargava2014b}.)

We compute exact or approximate values for this average when we only consider imaginary quadratic fields with discriminant at least $-X$, that is we compute or estimate
     $$
   \tilde{E}^-_{\leq X}(G,G')=\frac{\sum_{K\in IQ_{X}}
   \#\{\rho\in \Sur(G_\Q^{\un,\v}, G) | \rho \textrm{ is type }G'\}    }{\sum_{K\in IQ_{X}} 1}.
    $$
Recall that $\lim_{X\ra\infty} \tilde{E}^-_{\leq X}(G,G') =\tilde{E}(G,G')$.

\subsection{Description of the computation: theoretical}
If $K$ is a quadratic extension of $\Q$, and $L$ is an unramified $A_4$-extension of $K$, then since $A_4$ has a unique normal subgroup with quotient $C_3$, we see that $L$ contains a subfield $M$ such that $M/K$ is a cyclic cubic unramified extension.  It follows from class field theory that $M/\Q$ is an $S_3$ extension.  Each such $M$ contains a triple of isomorphic, non-Galois cubic fields, and we call one of them $M_3$.  Since $M/K$ is unramified, it is straightforward to check that $\Disc M_3=\Disc K$ and that $M_3/\Q$ is not totally ramified at any prime.  Moreover, if $M'_3/\Q$ is a cubic extension not totally ramified at any prime, it is straightforward that check that $M'_3$ is non-Galois and its Galois closure $M'$ contains quadratic subfield $K'$ such that $M'/K'$ is unramified.
(This approach to understanding cyclic cubic unramified extensions of quadratic fields in terms of non-Galois cubic fields can be found in the work of Davenport and Heilbronn \cite{Davenport1971}, along with more details about this correspondence.)   

Our algorithm thus begins with $M_3/\Q$ non-Galois cubic extensions.  We use Belabas's cubic software (described in \cite{Belabas1997} and available on his webpage) to create complete tables of isomorphism classes of cubic fields with discriminant in the interval $[-X,-1]$.  Then we use PARI/GP \cite{PARI2} to determine if the cubic field $M_3$ is nowhere totally ramified.  We determine that by first checking that $\Disc M_3$ is square-free, except for possible powers of $2$.  (A totally ramified prime $p$ will give $p^2\mid \Disc M_3$, and unless $p=2$, if $p^2\mid \Disc M_3$ then $p$ is totally ramified in $M_3$.)  Then we check whether $M_3$ is totally ramified at $2$.   The result is a complete list of nowhere totally ramified cubic fields $M_3$ with $\Disc M_3\in [-X,-1]$.   

We use PARI/GP for the remaining computations.  We compute the Galois closure $M$ of each $M_3$.  This, via the correspondence described above, gives us every unramified cyclic cubic extension $M/K$ of an imaginary quadratic field $K$.  Now if $L/K$ is an unramified $A_4$ extension with subfield $M$, then $L/M$ is an unramified $C_2\times C_2$ extension.  Thus for each $M$ in our list, we find all normal subgroups of the class group  $\operatorname{Cl}(M)$ with $C_2\times C_2$ quotients that are preserved by the action of $\Gal(M/K)$ but not by the action of $\Gal(M/\Q)$.  By class field theory, these correspond exactly to $C_2\times C_2$ unramified extensions $L/M$ such that $L/K$ is Galois but $L/\Q$ is not.  From this it actually follows that $\Gal(L/K)\isom A_4$.  Note that the Schur-Zassenhaus theorem implies that $\Gal(L/K)\isom (C_2\times C_2)\rtimes C_3$ for some action of $C_3$ on $C_2\times C_2$.  There are only two possible actions.  The non-trivial action gives  $\Gal(L/K)\isom A_4$, and the trivial action gives $\Gal(L/K)\isom C_3\times C_3\times C_2$.  However $\Gal(L/K)\isom C_3\times C_3\times C_2$, implies, by class field theory, that $L/\Q$ is Galois.  
So, we conclude that $\Gal(L/K)\isom A_4$.

So the $L$ we have ``found'' (we have actually found the corresponding quotients of $\operatorname{Cl}(M)$) 
are all the unramified extensions $L$ of an imaginary quadratic field $K$ with $\Disc K\in [-X,-1]$ such that $L/K$ is Galois with Galois group $A_4$, but $L/\Q$ is not Galois.  This counts exactly
  $\tilde{E}^-_{\leq X}(A_4,G')$, as described above (for the order 96 group $G'$ described above).

For comparison, we will also compute $\tilde{E}^-_{\leq X}(A,A\rtimes_{-1} C_2)$ exactly for some abelian groups $A$.
These we compute  using the tables of class groups of imaginary quadratic fields created by Mosunov and Jacobson \cite{Mosunov2016} and available on the LMFDB \cite{LMFDB}.  (For these computations we restrict to imaginary quadratic fields of discriminant congruent to $5$ mod $8$, for practical reasons coming from how the tables are given.)

\subsection{Description of the computation: practical}
We ran this computation exhaustively for $X=2^k$ for $1\leq k \leq 23$ 
to compute $\tilde{E}^-_{\leq X}(A_4,G')$ exactly.  However, at this point the exhaustive computation becomes increasingly impractical.  The main bottleneck is PARI/GP computing the class groups of the sextic fields.  

For $X=2^k$ for $21\leq k \leq 32$, we sample fields randomly instead of making an exhaustive search in order to estimate $\tilde{E}^-_{\leq X}(A_4,G')$.  In each interval of the form $(-\ell\cdot 2^{20},-(\ell+1)\cdot 2^{20}]$, we randomly sample $1\%$ of the exhaustive list of cubic fields with discriminant in this range.  (We select the $1\%$ uniformly, with replacement, by generating a random line number from the list of cubic fields.  Since we are sampling from an exhaustive list, the only source of error is the difference in characteristics between what is essentially a perfect random sample of the list and the whole list.)  From each of the chosen cubic fields $M_3$, we continue with the algorithm described above. 

The program described above was scheduled to run in parallel on a pool computers using the HTCondor \cite{Condor1, Condor2, Condor3} high-throughput distributed computing software developed and maintained by the Center for High Throughput Computing at UW-Madison.   The pool of computers on which computations were scheduled consisted of 56 machines running Linux maintained by the UW-Madison Department of Mathematics, which typically allowed for approximately 100 cores running PARI/GP jobs simultaneously (depending on usage load).  The machines in the UW-Madison Math Department pool typically had Intel Core i5 or Xeon CPUs, running at 2.1 GHz to 3.3 GHz, with each machine typically having 4 or 8 cores; note that not all cores are made available for HTCondor scheduling.  
%
%
%
%
%
The total computation time to collect all data 
was just over 3 years, namely 
1115 days, 15 hours, 41 minutes, and 41 seconds.

\subsection{Results of the computation}

We present in Figure~\ref{F} the results of our computation, with the green line giving exactly computed values of $\tilde{E}_{\leq X}(A_4,G')$ and the orange line representing approximations to $\tilde{E}_{\leq X}(A_4,G')$ via sampling.
Note that $X$ is plotted logarithmically.  
The other lines are all exact values for $\tilde{E}_{\leq X}(A,A\rtimes_{-1} C_2),$ (or more precisely its analog for quadratic fields of discriminant congruent to $5$ mod $8$)  where $A$ is an abelian group.  These other three lines are all conjectured, by the Cohen-Lenstra Heuristics \cite{Cohen1984} (see also \cite{Wood2017b} and \cite[Corollary 4]{Bhargava2016}), to go to 1.  The new insight of our paper leads to predicting that, unlike averages of unramified abelian extensions, $\tilde{E}_{\leq X}(A_4,G')$ should instead go to $2$. 

\begin{figure}
\includegraphics[scale=.7]{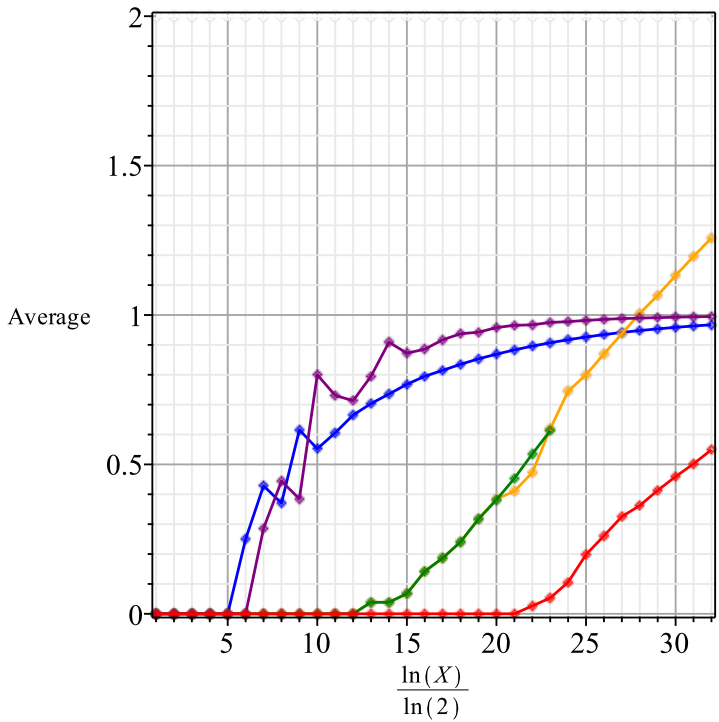}
\caption{Blue: $\tilde{E}_{\leq X}(C_3,S_3)$;  Purple: $\tilde{E}_{\leq X}(C_5,D_{10})$;
Red: $\tilde{E}_{\leq X}(C_3^3,C_3^3\rtimes_{-1} C_2)$; Green, Orange: $\tilde{E}_{\leq X}(A_4,G')$}\label{F}
\end{figure}

Empirically, in these cases and many other similar ones, one observes that these functions generally approach their (conjectured) limits from below, and appear roughly increasing and concave, especially after we pass out of a the range of small numbers. 
We  note that it appears that for small $G$, e.g. $G=C_3,C_5$, the apparent convergence is faster than for $G=A_4$ or $C_3^3$.
 With data up to $X={2^{23}}$, it is very hard to predict from the data whether $\tilde{E}_{\leq X}(A_4,G')$ will go above $1$.  However, with our sampled data, we find very convincing evidence that $\tilde{E}(A_4,G')>1.$
(Our estimation for $\tilde{E}_{\leq 2^{32}}(A_4,G')$ is $1.26$.)  
  This is the main phenomenon that is new in our conjecture, and from the number field side is a priori surprising and perhaps unintuitive (though somewhat explained now by the lifting invariant described above and its connection to the function field analog).

\subsection*{Acknowledgements}
We would like to thank to John Heim for assistance working with the UW-Madison Math Department computer pool, and we would also like to Steve Goldstein for providing many useful examples and explanations on how to use the HTCondor system. P. M. Wood's work on this project was partially supported by NSA grant number H98230-16-1-0301.

\newcommand{\etalchar}[1]{$^{#1}$}
\def\cprime{$'$}

\end{document}